\numberwithin{equation}{section}
\renewcommand{\cite}{\citet}
\renewcommand{\d}{\,\mathrm{d}}
\newcommand{\dd}{\overset{\mathrm{law}}{=}}
\newcommand{\p}{\mathbb{P}}
\newcommand{\var}{\mathrm{Var}}   % variance
\newcommand{\E}{\mathbb{E}}    % expectation
\newcommand{\R}{\mathbb{R}}    % real numbers
\newcommand{\N}{\mathbb{N}}    % natural numbers
\newcommand{\bdpi}{\boldsymbol \pi}
\theoremstyle{plain}
\newtheorem{theorem}{Theorem}
\newtheorem{lemma}[theorem]{Lemma}
\newtheorem{proposition}[theorem]{Proposition}
\theoremstyle{definition}
\newtheorem{definition}{Definition}
\newtheorem{example}{Example}
\newtheorem{fact}{Fact} 
\theoremstyle{remark}
\newtheorem{remark}{Remark} 
\newcommand{\sgn}{\mathrm{sgn}}
\newcommand{\cov}{\mathrm{Cov}}
\newcommand{\bz}{{\mathbf{z}}}
\renewcommand{\P}{\mathcal{P}}
\newcommand{\T}{\mathcal{T}}
\def\d{\mathrm{d}}
\def\laweq{\buildrel \mathrm{law} \over =}
\def\lawis{\buildrel \mathrm{law} \over \sim}
\newcommand{\bone}{ {\mathbbm{1}} }
\renewcommand{\P}{\mathcal{P}}
\newcommand{\M}{\mathcal{P}}
\newcommand{\C}{\mathcal{C}}
\newcommand{\X}{\mathfrak{X}}
\newcommand{\Y}{\mathfrak{Y}}
\renewcommand{\ge}{\geqslant}
\renewcommand{\le}{\leqslant}
\renewcommand{\geq}{\geqslant}
\renewcommand{\leq}{\leqslant}
\renewcommand{\epsilon}{\varepsilon}
\pgfplotsset{compat=1.7} 
\begin{document}
 
\title{Quadratic-form Optimal Transport}
\author{Ruodu Wang\thanks{Department of Statistics and Actuarial Science, University of Waterloo, Canada. Email: \texttt{wang@uwaterloo.ca}.}\and 
Zhenyuan Zhang\thanks{Department of Mathematics, Stanford University, USA. Email: \texttt{zzy@stanford.edu}.}
 }

\date{\today}

\maketitle

\begin{abstract}
We introduce the framework of quadratic-form optimal transport (QOT), whose transport cost has the form $\iint c\,\mathrm{d}\pi \otimes\mathrm{d}\pi$ for some coupling $\pi$ between two marginals. Interesting examples of quadratic-form transport cost and their optimization include inequality measurement, the variance of a bivariate function, covariance, Kendall's tau, the Gromov--Wasserstein distance, quadratic assignment problems, and quadratic regularization of classic optimal transport. 
 QOT leads to substantially different mathematical structures compared to classic transport problems and many technical challenges.  
 We illustrate the fundamental properties of QOT and provide several cases where explicit solutions are obtained. For a wide class of cost functions, including the rectangular cost functions, the QOT problem is solved by a new coupling called the diamond transport, whose copula is supported on a diamond in the unit square. 
 
\smallskip
\smallskip
\noindent\textbf{Keywords}: Quadratic programming, diamond transport, quadratic assignment problem, Gromov--Wasserstein distance, regularization, submodularity

\smallskip
\noindent \textbf{MSC classification 2020}: 49Q22; 62H05; 91B70; 62H20
\end{abstract}

\section{Introduction}

Given probability measures $\mu$ on a space $\X$ and $\nu$ on a space $\Y$, a transport plan, also called a  coupling, is a joint distribution on $\X\times \Y$ with marginals $\mu$ and $\nu$. It does not hurt to think of $\X=\Y=\R$ in this section, and $\X$ and $\Y$ will be general Polish spaces in the formal theory.
The set of all such transport plans is denoted by $\Pi(\mu,\nu)$. The classic Kantorovich optimal transport (OT) problem is
\begin{align*}
        \mbox{to minimize}\quad&\int c(x,y)\,\d \pi(x,y)\\
        \mbox{subject to}\quad &\pi\in\Pi(\mu,\nu),
    \end{align*}
where $c:\X\times \Y\to\R$ is a fixed cost function.
This problem can  be written in a probabilistic form:
$$
 \mbox{to minimize}\quad \E[c(X,Y)]~~~~
        \mbox{subject to}~~ X\lawis\mu; ~Y\lawis\nu,
$$
where $X \lawis\mu $ means the distribution of the random variable $X$ is $\mu$. We also call $(X,Y)$ a coupling. The objective $\int c \,\d \pi$ of the OT problem is called the transport cost. 
The OT problem and its numerous extensions have wide applications in various fields including statistics, machine learning, operations research, mathematical finance, and economics. We refer to \cite{Villani:2003,Villani:2009,Santambrogio:2015} for the theory of OT, and \cite{Galichon:2018,peyre2019computational} for applied perspectives.

A classic application of OT in economics and operations research 
concerns the problems of assignment and matching. While the transport cost $\E[c(X,Y)]$ includes many quantities of interest in these problems, such as total production or total cost, 
a larger framework is needed when a notion of equality needs to be optimized, as we illustrate below.

\bigskip
\textbf{Inequality minimization.} 
Suppose that there are two types of non-transferable and indivisible resources A and B with their distributions given by $\mu$ and $\nu$ on $\R$, and a social planner needs to choose a coupling $\pi\in \Pi(\mu,\nu) $  to allocate pairs of resources to (possibly a continuous spectrum of) individuals.
For two individuals with vectors of assigned resources $(x,y)$ and $(x',y')$, their \emph{discrepancy} is defined as a squared weighted sum of differences in each resource, that is, $$(\theta_1|x-x'|+\theta_2 |y-y'|)^2,$$ where the weights $\theta_1,\theta_2\ge 0$ are given.  
If we consider general spaces $\X,\Y$ instead of $\R$, then the discrepancy is $(\theta_1d_{\X} (x,x')+\theta_2 d_{\Y}(y,y'))^2$,
where $d_{\X}$ and $d_{\Y}$ are some distances on $\X$ and $\Y$.

The social planner would like to minimize the average (or total) discrepancy between two randomly selected individuals in the population, that is
\begin{equation}
    \label{eq:motivation1}
  \mbox{to minimize~~~}  \E[(\theta_1 |X-X'|+\theta_2 |Y-Y'|)^2] \mbox{~~~~over $\pi\in \Pi(\mu,\nu)$},
\end{equation}
where  $(X,Y)$ and $(X',Y')$ are independently drawn from the distribution $\pi$.\footnote{This problem has a  Kantorovich formulation. For a practical application with finitely many individuals, one may further require $\pi$ to be induced by a permutation (that is, the Monge formulation), but we will mainly focus on the Kantorovich formulation in our study, which is technically tractable and offers approximations for optimizers in the Monge setting (see Section \ref{sec:diamond}).}
% In the discrete case, \eqref{eq:motivation1} is equivalent to minimizing the population-wise total discrepancy between all pairs of individuals, although we will mainly focus on the Kantorovich formulation  $\pi\in \Pi(\mu,\nu)$ in our study.

If $\nu$ is degenerate (i.e., all individuals are assigned the same resource B) or $\theta_2=0$ (i.e., discrepancy in resource B does not matter), 
the objective in \eqref{eq:motivation1}
becomes $2\theta^2_1$ times the variance of $X$, which is a natural measure of distributional inequality. 
Hence, the expectation in \eqref{eq:motivation1} can be seen as a generalization of the variance when two different types of quantities are compared simultaneously. 
In addition to the variance, the idea of computing the expected difference between two randomly selected individuals is used to define other classic measures of inequality, such as the Gini deviation  and the Gini coefficient in economics; see Example \ref{ex:gini}.

For a concrete example, suppose that a government agency aims to enhance equality in a population by distributing a menu of economic benefits according to a given distribution $\nu$ (items in the menu cannot be combined or divided). The current wealth level of the population is described by a distribution $\mu$. 
After the financial policy, the wealth and benefits of the population are distributed as $\pi\in \Pi(\mu,\nu)$, determined by the agency. 
Discrepancy between two individuals occurs when either their wealth levels or their benefits differ (or both). 
The problem \eqref{eq:motivation1} is to minimize such discrepancy according to a certain policy $\pi$.

To analyze \eqref{eq:motivation1}, 
one may first look at some commonly encountered  couplings.
A quick observation is that if $\mu=\nu$ and $\pi$ is the comonotone coupling $\pi_{\rm com}$ (formal definition in Section \ref{sec:framework}),
then $Y=X$ and $Y'=X'$ (almost surely), and thus $|X-X'|=|Y-Y'|$. 
{Expanding \eqref{eq:motivation1}, we see that the transport cost (the expectation in \eqref{eq:motivation1}) consists of terms that do not depend on $\pi$, plus $2\theta_1\theta_2\cov(|X-X'|,|Y-Y'|)$.  
Since the distributions of $|X-X'|$ and $|Y-Y'|$
are determined solely by $\mu$ and $\nu$, the covariance and hence the transport cost 
 is maximized when $|X-X'|=|Y-Y'|$.} From there, 
one may then  
 wonder whether the antimonotone coupling  $\pi_{\rm ant}$ minimizes \eqref{eq:motivation1}. However, 
by choosing $\mu=\nu$ as the standard uniform distribution, $\pi_{\rm ant}$  also yields $|X-X'|=|Y-Y'|$, thus also maximizing transport cost. Therefore, intuitively, the optimal coupling must lie somewhere between the most positive coupling $\pi_{\rm com}$  and the most negative coupling $\pi_{\rm ant}$.  It turns out that the optimal coupling, solved in full generality in Section \ref{sec:diamond}, 
 has a special structure that is different from both independence and mixtures of $\pi_{\rm com}$ and $\pi_{\rm ant}$. 
 %
 %In Table \ref{table:numbers2} in Section \ref{sec:diamond}, we can see that for the case of standard uniform marginals, the comonotone coupling
 %and the antimonotone coupling have a transport cost $3.30$, the independent coupling has a  transport cost 2.92, and the half-half mixture of the  comonotone coupling
 %and the antimonotone coupling has a transport cost 3.05. 
 %The optimal coupling has a transport cost $2.88$.
%An optimal solution of \eqref{eq:motivation1}  will be obtained in full generality in Section \ref{sec:diamond}.

\bigskip

Inspired by the above assignment problem, we propose a new formulation of OT, called the \textit{quadratic-form optimal transport} (QOT).
Given a cost function $c:(\X\times \Y)^2\to \R$, we define the QOT problem as:
\begin{align}
        \mbox{to minimize}\quad&\iint c(x,y,x',y')\,\d \pi(x,y)\,\d \pi (x',y') \label{eq:QOT-cost}\\
        \mbox{subject to}\quad &\pi\in\Pi(\mu,\nu). \notag
    \end{align}
The term ``quadratic-form" reflects that a discrete formulation of the problem \eqref{eq:QOT-cost} can be written into the minimization of a quadratic form (see Appendix \ref{app:QPF}).
This term also distinguishes \eqref{eq:QOT-cost} from optimal transport with the quadratic cost function $c(x,y)=(x-y)^2$, which has been widely studied in the classic OT literature.\footnote{As a side note, the abbreviation QOT was also used for \textit{quadratically regularized optimal transport} in \cite{gonzalez2024sparsity}, \cite{nutz2024quadratically} and \cite{wiesel2024sparsity}; see Example \ref{ex:qf-reg} below.}
Compared to the classic OT problem,  the transport cost  \eqref{eq:QOT-cost} in QOT  is defined as a linear function of $\pi\otimes\pi$ instead of $\pi$ itself, and hence the problem is non-linear. 
Clearly, \eqref{eq:motivation1} is a special case of the QOT problem.

Certain cost functions, such as those that only involve $(x,x')$ or $(y,y')$, have a transport cost determined by the marginals $\mu,\nu$, and they are called \textit{QOT-irrelevant}.

The flexibility of the $4$-variate cost function $c$ allows for a rich spectrum of interesting instances of QOT. We pay special attention to two general sub-classes of cost functions, 
the class of  \emph{type-XX} cost functions,\footnote{In \eqref{eq:typeXX} and \eqref{eq:typeXY}, it should be clear that $f$ maps either $\X^2$ or $\X\times \Y$ to $\R$, $g$ maps either $\Y^2$ or $\X\times \Y$ to $\R$, and $h$ maps   $\R^2$ to $\R$.}
\begin{equation}
\label{eq:typeXX}
c(x,y,x',y')=h(f(x,x'),g(y,y'))\mbox{~~~~for some real-valued bivariate functions $f,g,h$},
\end{equation}
which includes \eqref{eq:motivation1}, 
and
the class of \emph{type-XY} cost functions
\begin{equation}
\label{eq:typeXY}
c(x,y,x',y')=h(f(x,y),g(x',y'))\mbox{~~~~for some real-valued bivariate functions  $f,g,h$}.
\end{equation}
Throughout, equations for the form of cost functions, such as  \eqref{eq:typeXX} and \eqref{eq:typeXY}, are meant to hold for all $(x,y,x',y')$. 
%We also allow both classes to include any additive terms that  depend only on $(x,x')$, $(x,y')$, $(x',y)$ or $(y,y')$, which has the same transport cost for any coupling. 
The terms XX and XY reflect the idea that the cost functions in \eqref{eq:typeXX} 
aggregate some costs (e.g., distances) between $x,x'$ and between $y,y'$,
and the cost functions in \eqref{eq:typeXY} 
aggregate costs between $x,y$ and between $x',y'$. It is possible that a non-constant cost function belongs to both types, such as $|x+y+x'+y'|$.  
The two types of cost functions 
lead to very different mathematical structures, which will be explored in this paper. 

Although the QOT framework is much more general than the two classes above, 
 these two types cover many commonly encountered problems. We give a few examples here with $\X=\Y=\R$, with detailed definitions and discussions in Section \ref{sec:exs}. 
The  QOT problem becomes a classic OT problem by choosing  the type-XY cost function
$$c(x,y,x',y')=f(x,y)+g(x',y') \mbox{~~~~for some bivariate functions $f,g$};$$ 
the transport cost 
is 
 the variance of $f(X,Y)$ by choosing   the type-XY cost function $$c(x,y,x',y')=\frac 12 (f(x,y)-f(x',y'))^2\mbox{~~~~for some bivariate function $f$};$$ 
 the transport cost is 
 the covariance of $(X,Y)$ 
 by choosing the  cost function of both types (up to QOT-irrelevant terms) 
 $$c(x,y,x',y')=\frac 12 \underbrace{(x-x')(y-y')}_{\text{type-XX}} = \frac{1}{2}\underbrace{(xy+x'y')}_{\text{type-XY}} - \frac{1}{2} \underbrace{(xy'+x'y)}_{\text{QOT-irrelevant}}; $$
   the transport cost is  
 Kendall's tau  of $(X,Y)$
 by choosing  the type-XX cost function $$c(x,y,x',y')=\sgn(x-x')\,\sgn(y-y');$$
 the optimal transport cost is the $p$-th power of a Gromov--Wasserstein (GW) distance
 by choosing   the type-XX cost function $$c(x,y,x',y')=\big||x-x'|-|y-y'|\big|^p,\qquad p\geq 1;$$  QOT also includes 
the quadratic regularization of classic optimal transport as a special case.    
Moreover, 
a specific Monge formulation of QOT includes
  the quadratic assignment problems  (QAP) of  \cite{Koopmans:1957}
  by choosing  the type-XX cost function  $$c(x,y,x',y')= f(x,x')g(y,y')\mbox{~~~~for some bivariate functions $f,g$}.$$ 
  %a special case of discrete Monge QOT is known as the \emph{quadratic assignment problem} (QAP), 
The Koopmans--Beckmann QAP solves  
\begin{align}
    \min_{\sigma\in S_n}\sum_{i=1}^n\sum_{j=1}^na_{ij}b_{\sigma_i\sigma_j},\label{eq:KB}
\end{align}
where $\{a_{ij}\}_{1\leq i,j\leq n},\{b_{ij}\}_{1\leq i,j\leq n}$ are given $n\times n$ matrices and $S_n$ is the set of all permutations of $[n]=\{1,\dots,n\}$. This includes many prominent examples such as the traveling salesman problem. 
% We refer to \cite{burkard2012assignment} and \cite{cela2013quadratic} for more background and \cite{cela2018new} for recent progress. As one of the most notoriously difficult combinatorial optimization problems, QAP is known to be NP-hard in general (\cite{loiola2007survey}). 
Our theory of QOT in the discrete case includes but is much more general than QAP. For instance, if the Monge assumption is relaxed, the optimizer may not even be a (deterministic) assignment; see Example \ref{ex:intro} below.
%Other notable instances of QOT include the Gromov--Wasserstein distance, the Gini index, Kendall's tau, and quadratically regularized OT. We examine these in detail in Section \ref{sec:exs}.

The non-linearity of QOT induces many difficulties and peculiarities. Since the problem is neither convex nor concave, duality is not generally available, and computational methods are also quite limited. In addition, explicit solutions are rare, while many peculiar examples exist due to the non-linearity. 
We illustrate a simple example below.

\begin{example}\label{ex:intro}
    Let $\mu=\nu$ be the two-point uniform distribution on $\{0,1\}$, that is, Bernoulli($1/2$), and consider the (type-XX) rectangular cost function $c(x,y,x',y')=|x-x'||y-y'|$, which is equivalent to \eqref{eq:motivation1} up to QOT-irrelevant terms. %; this cost function is called the rectangular cost function and investigated in detail in Theorem \ref{prop:pidia} for general marginals. 
    Any $\pi\in\Pi(\mu,\nu)$ can be parameterized by $p=
   2 \pi(\{(1,1)\}) \in [0,1]$, %where $(X,Y)\lawis\pi$, 
   and thus we may write $\Pi(\mu,\nu)=\{\pi_p\}_{p\in[0,1]}$. 
    By direct computation,
    %Consider $(X,Y,X',Y')\lawis \pi_p\otimes\pi_p$. Then by symmetry and independence of $(X,Y)$ and $(X',Y')$, 
\begin{align*}
    \iint c\,\d\pi_p\otimes\d\pi_p&=
    2 \pi_p\otimes \pi_p (\{(0,0,1,1), (0,1,1,0)\} ) 
 =2\left(  \frac{p^2}{4} +  \frac{(1-p)^2}{4}\right) 
    %\p(X\neq X',Y\neq Y')\\
   % &=\frac{1}{2}\p(Y\neq Y'\mid X=0,X'=1)\\
  %  &=\frac{1}{2}\big(\p(Y=0,Y'=1\mid X=0,X'=1)+\p(Y=1,Y'=0\mid X=0,X'=1)\big)\\
 %   &=\frac{1}{2}\big(\p(Y=0\mid X=0)\p(Y'=1\mid X'=1)+\p(Y=1\mid X=0)\p(Y'=0\mid X'=1)\big)\\
    =p^2-p+\frac{1}{2}.
\end{align*}
Therefore, the transport cost $\iint c\,\d\pi_p\otimes\d\pi_p$ is a quadratic function in $p$ that is uniquely minimized by $p=1/2$. In other words, the independent coupling is the unique minimizer. On the contrary, it is well-known that for classic OT, if $\mu,\nu$ are both uniformly distributed on the same number of points, a bijective optimizer exists, which follows from Birkhoff's theorem (\cite{Birkhoff:1946}).  
\end{example}

\begin{figure}[t]
\centering
\begin{subfigure}{0.19\textwidth}
    \includegraphics[width=1\linewidth]{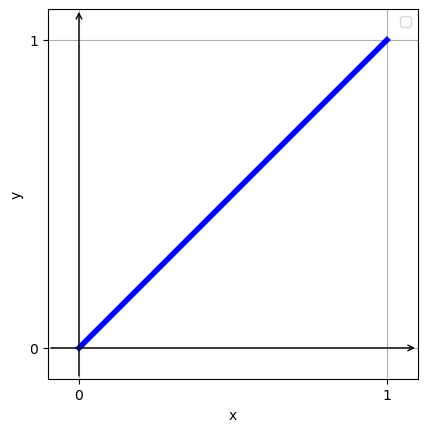}
    \caption{$\pi_{\rm com}$}
\end{subfigure}
\begin{subfigure}{0.19\textwidth}
\centering \includegraphics[width=1\linewidth]{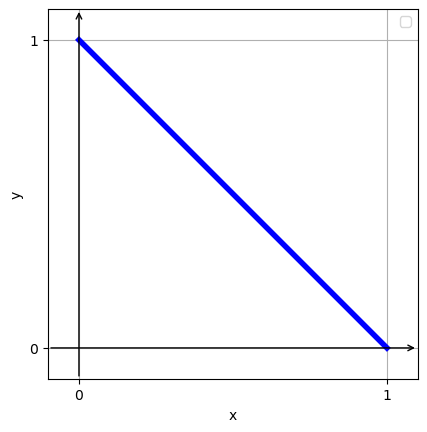}
\caption{$\pi_{\rm ant}$}
\end{subfigure} 
\begin{subfigure}{0.19\textwidth}
    \includegraphics[width=1\linewidth]{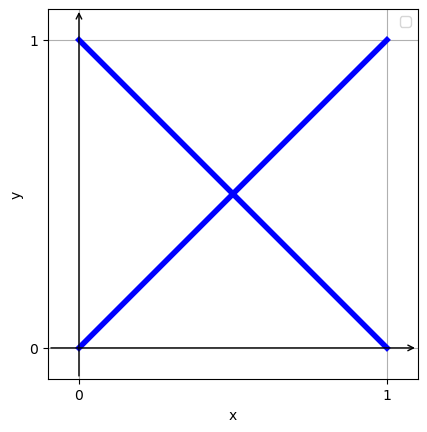}
    \caption{$\pi^{\lambda}_{\rm x}$}%, $\lambda \in (0,1)$}
\end{subfigure}
\begin{subfigure}{0.19\textwidth}
\centering \includegraphics[width=1\linewidth]{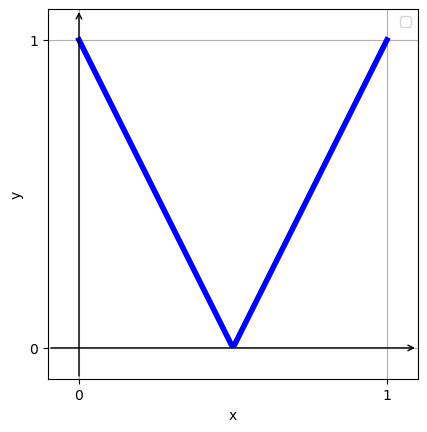}
\caption{$\pi_{\rm v}$}
\end{subfigure}
\begin{subfigure}{0.19\textwidth}
\centering \includegraphics[width=1\linewidth]{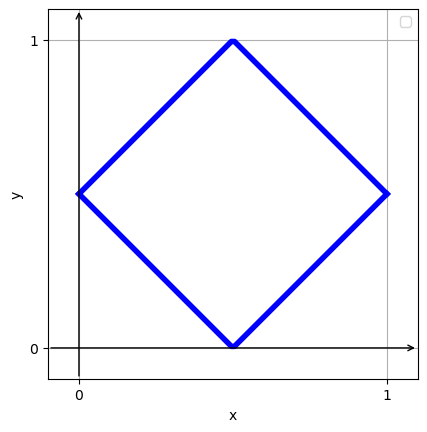}
\caption{$\pi_{\rm dia}$ }
\end{subfigure}
\caption{Illustration of the support of
the comonotone transport $\pi_{\rm com}$,
the antimonotone transport $\pi_{\rm ant}$,
the X-transport $\pi_{\rm x}^\lambda$ with $\lambda \in(0,1)$,
the V-transport $\pi_{\rm v}$,
and the diamond transport $\pi_{\rm dia}$,
%various transport plans 
which appear as QOT minimizers 
with marginals normalized to uniform distributions on $[0,1]$; for their corresponding cost functions and marginals, see Table \ref{tab:example} below. Blue lines indicate the support of the transport plans. The transport plans in (c), (d), and (e) appear new compared to classic OT. Precise definitions are given in Section \ref{sec:framework} and Definitions \ref{def:wedge} and \ref{def:diamond}.}
\label{fig:transport} 
\end{figure}

Given the richness of possible special cases and applications 
and the mathematical novelty of the new framework, we dedicate this paper to a systematic study of QOT.  
Our main contributions are summarized below.

\begin{table}[t]
\centering\renewcommand{\arraystretch}{1.5} 
\small
\begin{tabular}{c|c|c|c} 
{Cost function $c$} & {Marginals $\mu,\nu$} & {Minimizer} & {Location} \\ \hline 
\makecell{$f(x,y)g(x',y')$\\ $f,g$ quadratic }& general      & \rule{0pt}{20pt} \makecell{  $\pi_{\rm x }^\lambda= \lambda \pi_{\rm com}+(1-\lambda)\pi_{\rm ant}$\\ for some $\lambda\in [0,1]$ }     & Proposition \ref{ex:quadratic cost}      \\ \hline
\rule{0pt}{20pt}
\makecell{{$(\alpha_1f(x,y)+C_1)(\alpha_2f(x',y')+C_2)$}\\
$f$ submodular, $C_1,C_2,\alpha_1,\alpha_2\in \R$}    & general   & \makecell{  $\pi_{\rm x}^\lambda= \lambda \pi_{\rm com}+(1-\lambda)\pi_{\rm ant}$\\ for some $\lambda\in [0,1]$ }    & Proposition \ref{ex:quadratic cost-f}    \\ 
\hline
\rule{0pt}{25pt}
\makecell{$(x',y')\mapsto c(x,y,x',y')$
\\ and $(x,y)\mapsto c(x,y,x',y')$\\
both submodular}    & general   & $\pi_{\rm com}$    & Theorem \ref{prop:comonotone}    \\ \hline
\rule{0pt}{20pt}
\makecell{$h(|x-x'|,|y-y'|)$\\ $h$ submodular}      &\makecell{$\nu=\mu\circ \ell^{-1}$\\ $\ell$ linear map} & \makecell{$\pi_{\rm com}$ ($\ell$ increasing) \\ $\pi_{\rm ant}$ ($\ell$ decreasing)}
%\\both if $\mu\in\P_{\rm sym}$} 
& Theorem \ref{prop:cocounter}      \\ \hline 
\rule{0pt}{30pt}
\makecell{$f(|x-x'|)g(y,y')$\\  $f$ nonnegative increasing\\ $g$ increasing supermodular\\ and some regularity conditions}     & \makecell{$\mu $   is uniform\\ on an interval}   & $\pi_{\rm v}$   & Theorem \ref{prop:wedge}     \\ \hline
% $\min\{|x-y|,|x'-y'|\}$      & General $\mu,\nu$  & $\pi_{\rm com}$   &   Prop \ref{prop:min}   \\ \hline
\rule{0pt}{25pt} \makecell{$|(x-x')(y-y')|$\\or $(\theta_1|x-x'|+\theta_2|y-y'|)^2$\\with $\theta_1,\theta_2\geq 0$}      & general   & $\pi_{\rm dia}$   &   Theorem \ref{prop:pidia}   \\ \hline

\rule{0pt}{25pt} \makecell{$\phi((x-x')^2)\phi((y-y')^2)$\\ $\phi$ completely monotone\\ $\phi'(u)+2u\phi''(u)\leq 0$}      & $\mu,\nu$ symmetric     & $\pi_{\rm dia}$      &  Theorem \ref{thm:diamond}  \\ \hline
\rule{0pt}{15pt}
$|(x-x')(y-y')|^q,~q\in(1,2]$      & $\mu,\nu$ symmetric    & $\pi_{\rm dia}$   &   Theorem  \ref{thm:p-cost}  \\ \hline
% $|y-y'|e^{-\gamma|x-x'|},~\gamma>0$      & \makecell{$\nu=\mu\circ \ell^{-1}$\\ $\ell$ linear map}    & 
% \makecell{$\pi_{\rm com}$ ($\ell$ increasing) \\ $\pi_{\rm ant}$ ($\ell$ decreasing)}   & Proposition \ref{prop:dgamma basic}      \\ \hline
\end{tabular}\\~
\caption{A selected list of explicitly solved examples of QOT problems on the real line. The minimizers may not be unique. Certain moment assumptions on the marginals are omitted (compactness of support is sufficient).
For definitions of the couplings, see Section \ref{sec:framework} and Definitions \ref{def:wedge} and \ref{def:diamond}. 
In   Examples 
\ref{ex:sep}--\ref{ex:th-sub},
%\ref{ex:sep}--\ref{ex:qf-reg}, \ref{ex:th-sub}, \ref{ex:47}, and \ref{ex:dia}, 
many more explicit cost functions, some of which belong to the above general classes, are presented. The conclusions remain the same if QOT-irrelevant terms like $w_1(x,x')+w_2(y,y')+w_3(x,y') + w_4(x',y)$ are added to the cost function $c$ (Fact \ref{rem:marginals}).
} 
\label{tab:example}
\end{table}

In Section \ref{sec:framework}, we formally present the framework of QOT on general Polish spaces. 
 {Section \ref{sec:exs} collects many relevant examples of QOT in optimization, economics, computational OT, and statistics.}

 Section \ref{sec:general} provides general results on QOT, including properties of the optimizers and general lower bounds of the QOT cost, some of which extend known results on the GW distance. 
Section \ref{sec:explicit} examines several explicit solvable cases. We show that the comonotone  coupling $\pi_{\rm com}$ and the antimonotone coupling $\pi_{\rm ant}$, as well as their mixtures, form solutions to many classes of cost functions, including quadratic costs, jointly submodular/supermodular costs, and Gromov--Wasserstein-type costs.

Due to the fundamental differences from the classic OT setting, many new optimal transport plans emerge besides the comonotone and antimonotone ones. Figure \ref{fig:transport} illustrates the support of some  QOT minimizers. The most interesting one is arguably the diamond transport $\pi_{\rm dia}$, for the following reasons: first, it does not appear in other contexts of OT or QAP; second, it is perfectly symmetric but is not Monge; third, it serves as a universal minimizer of a large class of QOT problems with some assumptions on the marginals. A simple example with a diamond minimizer is given by the rectangular cost function $c(x,y,x',y')=|(x-x')(y-y')| $ in Example \ref{ex:intro}, equivalent to the one in \eqref{eq:motivation1}. 
%This cost function measures the difference between $x$ and $x'$
%weighted by the difference between $y$ and $y'$. The transport cost is high when a larger difference between $y$ and $y'$ is associated with a larger difference between $x$ and $x'$.
%The associated transport cost measures the dispersion of the coupling $(X,Y)\lawis\pi$: a larger cost indicates a larger correlation of the independent difference in $x$ and the independent difference in $y$, thus a smaller dispersion.
%For instance, a relevant application of such a quadratic-form transport cost is that it can compare,  with 
%given marginal distributions $\nu,\mu$ of work skills and job levels, 
%the within-occupation skill discrepancy 
%for different assignments of workers and jobs; see 
%\cite{boerma2023composite} for an OT-based formulation of the assignment problem of workers and jobs. 
Section \ref{sec:diamond} focuses on the   diamond transport.

 Section \ref{sec:concl} concludes with a few open questions. The appendices contain further discussions, additional results, and omitted proofs. Appendix \ref{app:QPF} gives a quadratic programming formulation of QOT. Appendix \ref{sec:no ind} shows that the independent coupling is rarely, but possibly, an optimizer of the QOT, and gives some interesting examples. In Appendix \ref{sec:dispersion}, we consider the class of linear-exponential distance cost functions of the form
$c(x,y,x',y')=|y-y'|e^{-\gamma |x-x'|},~\gamma>0$.
This class of cost functions
are minimized by the comonotone coupling, but its maximizers have interesting limiting behavior as $\gamma $ goes to $0$ or $\infty$, such as the diamond and independent couplings, in some special senses. In particular, the limitting case $\gamma\to\infty$ is connected to recently studied measures of association (\cite{chatterjee2021new,deb2020measuring}).
Appendix \ref{sec:EC-disc} contains a more detailed discussion of the open questions from Section \ref{sec:concl}. Appendices \ref{app:sec3}--\ref{app:sec5} collect omitted proofs of the results from Sections \ref{sec:general}--\ref{sec:diamond}.
 
Before moving on to the formal analysis, we summarize in Table \ref{tab:example} the QOT problems with known explicit optimizers obtained in this paper.

\section{Framework}\label{sec:framework}

As in the Introduction, 
let $\mathfrak X$
and $\mathfrak Y$ be two Polish spaces,    $\mu$ and $\nu$
be two probability measures on $\mathfrak X$
and $\mathfrak Y$ respectively, and $\Pi(\mu,\nu)$ be the set of all distributions on $\X\times \Y$ with marginals $\mu,\nu$.  In many explicit results and examples, we will take $\X=\Y=\R$, but we also present some results on more general spaces. 
For a function $c:(\X\times\Y)^2\to \R$, 
called a cost function,
%need to impose the assumption that either $c$ or $-c$ (or both) belongs to the class $\mathcal C(\mu,\nu)$ defined in \eqref{eq:Cmunu} below, and satisfying suitable measurability.} 
and a coupling $\pi\in \Pi(\mu,\nu)$, 
define the 
\emph{quadratic-form transport cost} as\footnote{Throughout, we tacitly assume suitable measurability of the cost function $c$ so that \eqref{eq:QOT cost} is meaningful.} 
\begin{align}\label{eq:QOT cost}
\iint c\,\d\pi\otimes\d\pi=\iint c(x,y,x',y')\,\d \pi(x,y)\,\d \pi (x',y')
\end{align} 
and the (Kantorovich) QOT problem is to minimize (and occasionally, to maximize) this transport cost over all $\pi \in \Pi(\mu,\nu)$ such that the integral \eqref{eq:QOT cost} is well-defined (taking possibly infinite values). We omit ``Kantorovich'' in the sequel. 
The probabilistic formulation of the quadratic-form transport cost \eqref{eq:QOT cost} is 
$
\E[{c}(
\mathbf Z,\mathbf Z')]
$,
where $\mathbf Z,\mathbf Z'\lawis\pi$ are iid.
A quadratic program formulation of discrete QOT is presented in Appendix \ref{app:QPF}.

\begin{fact}\label{rem:marginals}
\sloppy    The QOT problem remains equivalent (that is, with the same set of minimizers) if  QOT-irrelevant terms are added to the cost function.
For instance, the cost functions $c$ and $$(x,y,x',y')\mapsto c(x,y,x',y')+w_1(x,x')+w_2(y,y') +w_3(x,y') + w_4(x',y)$$ lead  to equivalent QOT problems. 
%because the integral of the added bivariate terms over $\pi\otimes \pi$ depends only on the marginals. We say that a cost function is \textit{QOT-irrelevant} if its expectation under $\pi\otimes\pi$ is constant in $\pi$ (i.e., only depends on the marginals $\mu,\nu$).  
All results in this paper automatically hold when QOT-irrelevant terms are added to the cost functions.
% In this case, all results in this section transfer automatically.
\end{fact}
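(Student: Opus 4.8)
The plan is to show that each of the four perturbing terms $w_1(x,x')$, $w_2(y,y')$, $w_3(x,y')$, $w_4(x',y)$ contributes only an additive constant to the quadratic-form transport cost $\iint c\,\d\pi\otimes\d\pi$, where the constant is determined by $\mu$ and $\nu$ alone and is independent of the choice of $\pi\in\Pi(\mu,\nu)$. Granting this, the objective of the perturbed QOT problem differs from that of the original one by a fixed constant over all of $\Pi(\mu,\nu)$, so the two problems have the same set of minimizers (and likewise the same set of maximizers), which is the assertion.

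The one computation to carry out is the joint law, under $\pi\otimes\pi$, of each of the four ``projected'' pairs. Write $\mathbf{Z}=(X,Y)$ and $\mathbf{Z}'=(X',Y')$ for independent random vectors with $\mathbf{Z},\mathbf{Z}'\lawis\pi\in\Pi(\mu,\nu)$. By the marginal constraint $X,X'\lawis\mu$ and $Y,Y'\lawis\nu$, and $\mathbf{Z}$ is independent of $\mathbf{Z}'$; hence $(X,X')\lawis\mu\otimes\mu$, $(Y,Y')\lawis\nu\otimes\nu$, and both $(X,Y')$ and $(X',Y)$ are distributed as $\mu\otimes\nu$. Equivalently, the four coordinate projections $(x,y,x',y')\mapsto(x,x'),(y,y'),(x,y'),(x',y)$ push $\pi\otimes\pi$ forward to $\mu\otimes\mu$, $\nu\otimes\nu$, $\mu\otimes\nu$, $\mu\otimes\nu$ respectively, none of which depends on $\pi$. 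By the image-measure formula this gives
\[
\iint w_1(x,x')\,\d\pi\otimes\d\pi=\iint w_1\,\d\mu\otimes\d\mu,
\]
and likewise $\iint w_2(y,y')\,\d\pi\otimes\d\pi=\iint w_2\,\d\nu\otimes\d\nu$, $\iint w_3(x,y')\,\d\pi\otimes\d\pi=\iint w_3\,\d\mu\otimes\d\nu$, and $\iint w_4(x',y)\,\d\pi\otimes\d\pi=\iint w_4\,\d\mu\otimes\d\nu$; every right-hand side depends only on $\mu,\nu$ and the $w_i$. By linearity of $\pi\otimes\pi$-integration over the five summands of the perturbed cost function $c+w_1+w_2+w_3+w_4$, its transport cost equals $\iint c\,\d\pi\otimes\d\pi$ plus the fixed sum of these four constants, for every $\pi\in\Pi(\mu,\nu)$, which is precisely what is needed.

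I expect the only point requiring care --- and hence the main, though mild, obstacle --- to be well-definedness of the integrals and the legitimacy of splitting $\iint(c+w_1+w_2+w_3+w_4)\,\d\pi\otimes\d\pi$ into five pieces. This is unproblematic as soon as the $w_i$ are integrable against the relevant product measures (e.g.\ bounded, or with $\iint|w_1|\,\d\mu\otimes\d\mu<\infty$, and similarly for the others), so that the added constant is a finite real number; then $\iint c\,\d\pi\otimes\d\pi$ is well-defined in $[-\infty,+\infty]$ if and only if the perturbed transport cost is, and the two differ by that finite constant, so no further argument is required. As is customary in the paper, such integrability hypotheses are kept implicit; this is exactly why the admissible perturbations are the QOT-irrelevant terms, whose defining feature is that their transport cost is pinned down by the marginals.
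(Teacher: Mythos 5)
Your proof is correct and is essentially the argument the paper takes for granted when it labels this a Fact without proof: under $\pi\otimes\pi$ with $(X,Y),(X',Y')$ iid from $\pi\in\Pi(\mu,\nu)$, the projections $(X,X')$, $(Y,Y')$, $(X,Y')$, $(X',Y)$ have laws $\mu\otimes\mu$, $\nu\otimes\nu$, $\mu\otimes\nu$, $\mu\otimes\nu$ respectively, all independent of $\pi$, so each $w_i$ integrates to a $\pi$-independent constant and the set of minimizers is unchanged. You correctly identify the small subtlety that independence of $(X,Y)$ and $(X',Y')$ is what makes the cross-terms $w_3(x,y')$ and $w_4(x',y)$ QOT-irrelevant (not just $w_1,w_2$, which the paper's informal definition mentions explicitly), and your remark about implicit integrability hypotheses is in keeping with the paper's conventions.
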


In certain applications, one may restrict to the Monge setting, where the set of couplings is induced by functions. Denote by $\T(\mu,\nu)$ the set of measurable maps $T:\X\to\Y$ satisfying $\mu\circ T^{-1}=\nu$, also known as the set of transport maps (or Monge maps) from $\mu$ to $\nu$. The \textit{Monge QOT} problem is to minimize 
$$\iint c(x,T(x),x',T(x'))\,\d\mu(x)\,\d\mu(x'),$$
over the set $T\in\T(\mu,\nu)$.

The QOT problem can be realized as a variation of the multi-marginal OT problem under independence and marginal constraints. Consider the set $\Pi(\mu,\nu,\mu,\nu)$ of all probability measures on $(\X\times\Y)^2$ with the four marginals given respectively by $\mu,\,\nu,\,\mu,\,\nu$. The \textit{multi-marginal optimal transport} problem minimizes the transport cost
$$\int c(x,y,x',y')\,\d\tilde  \pi(x,y,x',y')$$
over $\tilde \pi\in \Pi(\mu,\nu,\mu,\nu)$; see \cite{pass2015multi,pass2024general} for surveys. Let $\Pi_{\mathrm{ind}}(\mu,\nu,\mu,\nu)$ be the couplings $(X,Y,X',Y')$ of $\mu,\,\nu,\,\mu,\,\nu$ such that $(X,Y)$ and $(X',Y')$ are independent and have the same distribution. We then arrive at the equivalence 
$$\inf_{\tilde \pi\in \Pi_{\mathrm{ind}}(\mu,\nu,\mu,\nu)}\int c(x,y,x',y')\,\d \tilde \pi(x,y,x',y')=\inf_{\pi\in\Pi(\mu,\nu)}\iint c(x,y,x',y')\,\d \pi(x,y)\,\d \pi (x',y').$$

In the rest of this section, we recall some fundamental results in classic OT and set up the necessary notation. 
Recall that a function $f:\R^2\to\R$ is called \textit{submodular} if for any $x<x'$ and $y<y'$,
\begin{align}
    f(x,y)+f(x',y')\leq f(x,y')+f(x',y),\label{eq:sub}
\end{align}
and $f$ is called \textit{supermodular} if for any $x<x'$ and $y<y'$,
\begin{align}
    f(x,y)+f(x',y')\geq f(x,y')+f(x',y).\label{eq:sup}
\end{align} 
In case  the inequalities in \eqref{eq:sub} and \eqref{eq:sup} are strict, we say $f$ is strictly submodular (or supermodular). Assuming $f\in C^2(\R^2)$,  the cross partial derivative $f_{xy}$ is nonnegative (resp.~nonpositive) if and only if $f$ is supermodular (resp.~submodular).

Two classic couplings are fundamental to classic OT on $\mathfrak X\times \mathfrak Y=\R^2$, which we define below.
 Denote by $\M(\R)$ the set of probability measures on $\R$. 
For a probability measure $\mu\in\M(\R)$, 
let $Q_\mu $ be the left quantile function of $\mu$, that is, $Q_\mu(t)=\inf \{x\in \R: \mu((-\infty,x]) \ge t\}$ for $t\in [0,1]$ with $\inf\varnothing =\infty$. 
A coupling $(X,Y)$ with marginals $\mu,\nu\in\M(\R)$, or its joint distribution, is \textit{comonotone} if 
$(X,Y)\laweq (Q_\mu(U),Q_{\nu}(U))$,
where $U$ is uniformly distributed on $[0,1]$; it
is \textit{antimonotone}  if 
$(X,Y)\laweq (Q_\mu(U),Q_{\nu}(1-U))$.
It is well known that these two couplings either maximize or minimize classic OT problems when the cost function is submodular or supermodular (e.g., Theorem 2.9 of \cite{Santambrogio:2015}). We let $\pi_{\mathrm{com}}\in\Pi(\mu,\nu)$ denote the comonotone coupling, $\pi_{\mathrm{ant}}\in\Pi(\mu,\nu)$ denote the antimonotone coupling, and $\pi_{\mathrm{ind}}=\mu\otimes\nu\in\Pi(\mu,\nu)$ denote the independent coupling.
Couplings such as $\pi_{\rm com}$, $\pi_{\rm ant}$, and $\pi_{\rm ind}$ depend on the marginals $\mu,\nu$, which should be clear from context.
 In addition, for $\lambda\in[0,1]$,  let $\pi_{\rm x}^\lambda=\lambda\pi_{\rm com}+(1-\lambda)\pi_{\rm ant}\in\Pi(\mu,\nu)$; 
 the coupling 
 $\pi_{\rm x}^\lambda$ for $\lambda \in (0,1)$ is called an \emph{X-transport} because its support has an X-shape;
 see Figure \ref{fig:transport}, panel (c).

Some further notation and terminologies will be used throughout the paper. 
We say that a measure $\mu\in\M(\R)$ is \textit{symmetric} if there exists $m\in \R$ such that  
%there exists a constant $m_\mu$ such that 
$\mu(A)=\mu(m-A)$ for all Borel sets $A\subseteq\R$. 
This means $X\lawis \mu  \iff  m-X\lawis \mu$. 
Otherwise, we say $\mu$ is \textit{asymmetric}. For a probability measure $\mu$ on $\R^d$ with $d\in \N$, we denote by
  $F_{\mu}$ the cdf of $\mu$. For $p\geq 1$, we let $\P_p(\R)$ denote the set of probability measures $\mu\in\M(\R)$ with a finite $p$-th absolute moment, i.e., $\int_\R |x|^p\d\mu(x)<\infty$. For $a<b$, we denote by $\mathrm{U}(a,b)$ the uniform distribution on $[a,b]$. We also write $\mathrm{U}=\mathrm{U}(0,1)$.
Denote by $\R_+$ the set of nonnegative real numbers.

\section{Examples of QOT}\label{sec:exs}

In this section, we discuss several examples of QOT that appear in different fields {and how they connect to the results in this work}.

\begin{example}[Sum of bivariate functions]
\label{ex:sep}
Suppose that the cost function  $c$ can be written as the sum of several bivariate functions, that is, 
$$
c(x,y,x',y')= f(x,y) +g(x',y') + w_1(x,x')+w_2(y,y') +w_3(x,y') + w_4(x',y).
$$
By  Fact \ref{rem:marginals}, 
the QOT problem with the above cost is equivalent to   
 the type-XY cost function  $  g(x,y)+h(x',y')$.
In this case, 
 $$\int \left(f(x,y)+ g(x',y')\right ) \d \pi (x,y) \d \pi (x',y')=\int (f+g)\,\d \pi . $$
  In other words, the QOT problem reduces to a classic OT problem with cost function $f+g$.
  In particular, if $f+g$ is submodular, then the comonotone coupling is a minimizer. 
  On the other hand, if the cost function $c $   has a multiplicative form
  $c(x,y,x',y')=f(x,y)g(x',y')$, 
  then  $\iint c\,\d\pi\otimes\d\pi   = (\int  f\, \d \pi )(\int g\, \d \pi), $
which is the product of two transport costs in classic OT. If $f$ and $g$
 are both submodular and nonnegative, a minimizer is the comonotone coupling. 
 \end{example}

\begin{example}[Variance minimization with given marginals]\label{ex:var}

 Let $f: \X\times\Y \to \R$ be a measurable function. 
 Suppose that the goal is to minimize the variance of $f(X,Y)$ subject to $X\lawis \mu$ and $Y\lawis \nu$. 
 This problem is QOT with the type-XY nonnegative cost function  $c$ given by 
 $c(x,y,x',y')=(f(x,y)-f(x',y'))^2/2$ because, for $(X,Y)\lawis \pi$,
% \begin{equation}
%     \label{eq:var-min}
% \var(f(X,Y))= \iint \left(f(x,y)^2-f(x,y)f(x',y') \right)\d\pi (x,y)\,\d\pi (x',y').
% \end{equation} 
%An equivalent formula is 
\begin{align}
    \label{eq:var-min} 
    \begin{aligned}
  \iint c\,\d\pi \otimes \d\pi &=\begin{cases}\E[f(X,Y)^2]- \E[f(X,Y)]^2 & \text{ if }\E[f(X,Y)^2]<\infty 
\\\infty & \mbox{ otherwise}
\end{cases} 
\\ &= \var(f(X,Y)). 
    \end{aligned}
\end{align} 
This QOT problem is well-posed even when $f(X,Y)$ does not have a finite variance for some coupling $\pi$.
The minimization of \eqref{eq:var-min} is not a classic OT problem, because transport costs in classic OT are linear in $\pi$, whereas \eqref{eq:var-min}  is not.  
 % There are many natural examples in which reducing the variance of an output $f(x,y)$ is relevant. 
 % For a concrete example, consider an assignment of driver types $x\in\X$
 % and vehicle types $y\in \Y$ on a group trip.
 % The value $f(x,y)$ represents the estimated speed of the driver $x$ in vehicle $y$, and the QOT with transport cost \eqref{eq:var-min} is to reduce the variance of the estimated speed among moving units in the group.

 % In the simple case that $f(x,y)=x+y$, 
 % %g(x+y)$ %where $g$ is either nonnegative   convex or linear, 
 % the variance of $f(X,Y)$ is minimized by the antimonotone  coupling, because in this special case, $\E[f(X,Y)]$ does not depend on $\pi$; this can also be shown by Theorem \ref{prop:comonotone} below.
\end{example}
\begin{example}[Covariance]
\label{ex:cov}
Assume that $\mu,\nu\in\P_2(\R)$. Consider the QOT problem with the type-XX (and also type-XY, up to QOT-irrelevant terms) cost function $c$ given by 
$$
c(x,y,x',y')= \frac 12 (x-x')(y-y') = \frac{1}{2}(xy+x'y')-\frac 12(xy'+x'y).
$$
For $(X,Y)\lawis \pi$,  we can verify 
\begin{align*}
\iint c  \, \d \pi \otimes \d \pi&=
\frac12 \left(\E[XY]+\E[X'Y'] -\E[XY']-\E[X'Y]\right)
\\&=\E[XY] - \E[X]\E[Y]=\cov(X,Y).
\end{align*}
Therefore, the transport cost is the covariance of $(X,Y)$. It is well-known that the unique minimizer of covariance is the antimonotone coupling and the unique maximizer is the comonotone coupling, which is also a consequence of Theorem \ref{prop:comonotone}.

\end{example}
\begin{example}[Kendall's tau]\label{ex:kendall tau}
    Kendall's tau, also called Kendall's rank correlation coefficient, is one of the most popular measures of bivariate rank correlation, widely used in statistics and stochastic modeling; see  e.g., \citet[Chapter 5]{nelsen2006introduction}  and \citet[Chapter 7]{mcneil2015quantitative}. For a random vector $(X,Y)$ taking values in $\R^2$, its Kendall's tau is defined as
    $$
    \tau = \E[\sgn((X-X')(Y-Y'))],    
    $$
    where $(X',Y')$ is an independent copy of $(X,Y)$. Intuitively, it equals the probability of concordance 
    minus  that  of discordance between $(X,Y)$
    and $(X'    ,Y')$.  
    Clearly, $\tau $ of $(X,Y)\lawis \pi$ can be written as the quadratic-form transport cost
  %  $$
  %  \tau(\pi) = \iint \sgn(x-x')\,\sgn (y-y')  \, \d \pi(x,y)\,\d \pi (x',y')
  %  $$
    with the type-XX cost function $c(x,y,x',y')=\sgn(x-x')\,\sgn (y-y')$.
    For given marginals $\mu,\nu$, it is well-known that $\tau(\pi)$ over $\pi\in \Pi(\mu,\nu)$ is maximized by the comonotone coupling  with maximum value $1$
    and minimized by the antimonotone coupling with minimum value $-1$ (this can also be checked by Theorem \ref{prop:comonotone}; see Example \ref{ex:th-sub}).
  Another equivalent formulation is
    $
     \tau(\pi) = 4 \int  \pi \d \pi  -1,
    $
    from which the quadratic form in $\pi$ is visible.

 %   Kendall's tau has many convenient properties in dependence modeling. 
 %  It is invariant under strictly increasing marginal transformations and is easy to estimate based on data. For a sample of $n$ pairs $(x_1,y_1),\dots,(x_n,y_n)$ where $n\geq 2$, its sample Kendall's tau is defined as 
 %    $$
 % \hat \tau ={\frac {2}{n(n-1)}}\sum _{i<j} \sgn(x_{i}-x_{j})\,\sgn(y_{i}-y_{j}),
 %    $$
 %    which is also the empirical estimator of $\tau$ based on the sample, and it can be used to test independence. Kendall's tau has explicit formulas for many classes of copulas and joint distributions, and these formulas can be used to estimate parameters in dependence models.  We refer to \citet[Chapter 7]{mcneil2015quantitative} for details. 
\end{example}

\begin{example}[Gini  deviation and Gini coefficient]\label{ex:gini}
Let $L^1$ be the set of integrable random variables and $L^1_+=\{Z\in L^1: Z\ge 0;~ \E[Z]>0\}.$
Define the mappings $\mathrm{GD}$ and $\mathrm{GC}$ on $L^1_+$ by  
$$
\mathrm{GD} (Z)=\frac 12 \E[|Z-Z'|]
\mbox{~~~and~~~} \mathrm{GC} (Z)=\frac{\mathrm{GD}(Z)}{\E[Z]} = \frac{\E[|Z-Z'|]}{\E[Z+Z']},
$$
where $Z'$ is an independent copy of $Z$.
The value  $\mathrm{GD} (Z)$ is called the Gini deviation of $Z$, 
and  $\mathrm{GC} (Z)$ is called the Gini coefficient of $Z$, both of which are commonly used as measures of distributional variability or inequality in  economics and risk management; see e.g., \cite{gastwirth1971general} and \cite{furman2017gini}.  
%Their interpretation is that GD evaluates the expected difference (of, e.g., income or wealth) between two individuals from a population, and GC represents the ratio of the expected difference to the expected sum.
%A larger GD (resp.~GC) indicates a more severe inequality in income or wealth in the absolute (resp.~relative) term. Note that the range of GC is $[0,1)$. 
Similarly to the variance in \eqref{eq:var-min}, minimization of 
 $\mathrm{GD}(f(X,Y))$ for some measurable $f:\R^2\to \R$  
 over $X\lawis \mu$ and $Y\lawis \nu $
 can be written as
the QOT problem with the type-XY cost function $|f(x,y)- f(x',y')|$. 
%\begin{equation}
%    \label{eq:GD}
%\min_{\pi\in \Pi(\mu,\nu)}  {\iint |f(x,y)- %f(x',y')| \, \d \pi(x,y)\,\d \pi (x',y') }.
%\end{equation}
Moreover, the minimization of 
 {$\mathrm{GC}( X+Y)$} can be written as 
\begin{equation}
    \label{eq:GC}
\min_{\pi\in \Pi(\mu,\nu)} \frac{\iint |x+y- x'-y'|  \,\d \pi(x,y)\,\d \pi (x',y') }
{ 2 (\int x \d \mu(x) + \int y \d \nu (y)) },
\end{equation} 
which is equivalent to a QOT problem with cost function   $c(x,y,x',y')=|x+y- x'-y'| $, noting that the denominator of \eqref{eq:GC} does not involve $\pi$. {A minimizer of \eqref{eq:GC}  is given by the antimonotone coupling $\pi_{\mathrm{ant}}$; see Example \ref{ex:th-sub}(v) below.}
This cost function is both type-XX and  type-XY.
%The problem \eqref{eq:GC} is solved explicitly by the antimonotone coupling (see Theorem \ref{prop:comonotone} below),
%whereas the problem \eqref{eq:GD}  depends on the properties of $f$ and may not admit an explicit optimizer.  

% In the problem of assigning financial benefits described in the Introduction, the agency may choose to minimize GD or GC  of $f(X,Y)$ instead of \eqref{eq:motivation1}.   For instance, minimizing 
% $\mathrm{GC}(X+Y) 
% $ yields the antimonotone coupling, which  can be interpreted as   the poor   receiving more financial aid than the rich, consistent with  common sense. 
% The structure of the problem is quite different from \eqref{eq:motivation1}, because here the two components are assessed in aggregation, whereas they are assessed separately in \eqref{eq:motivation1}. 

%Below we describe a concrete application of minimizing GC as in \eqref{eq:GC}. Suppose that a government agency aims to enhance equality in a population by distributing financial assistance according to a given distribution $\nu$ on $\R_+$, representing the available types of financial aid. The pre-policy income level of the population in the considered time period is described by a distribution $\mu$ on $\R_+$. After the financial assistance, the population income is distributed according to $X+Y$, where $X\lawis\mu$,   $Y\lawis\nu$, 
%and $X$ and $Y$ are coupled in a way the agency decides. 

\end{example}

\begin{example}[Gromov--Wasserstein (GW) distance]
\label{ex:GW}
A special case of  QOT is the GW distance, a measure of the distance (or similarity) between two metric measure spaces  introduced and studied by \cite{memoli2007use,memoli2011gromov}.  
Suppose that $(\X,d_\X,\mu)$ and $(\Y,d_\Y,\nu)$ are metric measure spaces. For $p,q\geq 1$, the $(p,q)$-GW distance is defined as
\begin{align}
    \mathrm{GW}_{p,q}(\X,\Y):=\bigg(\inf_{\pi\in\Pi(\mu,\nu)}\iint |d_\X(x,x')^q-d_\Y(y,y')^q|^p \d \pi(x,y)\,\d \pi (x',y')\bigg)^{1/p}.\label{eq:GWpq}
\end{align}
This is an increasing transform of the minimum transport cost of QOT with a type-XX cost function. 
    The GW distance satisfies the triangle inequality and defines a pseudo-metric on metric measure spaces (and a metric on isomorphism classes of metric measure spaces; see \citet[Corollary 9.3]{sturm2023space}).
   % It is well-known that the GW distance is zero if the metric measure spaces are isometric. 
    The GW distance is a widely used technique in data science, machine learning, computer vision, and computer graphics to align heterogeneous data sets or images (\cite{memoli2011gromov,peyre2019computational}). However, in general, solving for the GW distance is a challenging task. 
    % Although entropic regularization provides a feasible approach for computing the GW distance (\cite{solomon2016entropic}), there is no general convergence guarantee of Sinkhorn's iteration. Recently, a duality theory and statistical aspects of the $(2,2)$-GW distance have been investigated in \cite{zhang2024gromov}. We refer to \cite{dumont2024existence} and \cite{sturm2023space} for recent progress on the existence of Monge maps and further results on the GW distance. 
%    Since the GW distance measures the distance between metric measure spaces, it is clear that if $(\X,d_\X,\mu)$ is a lateral shift of $(\Y,d_\Y,\nu)$, the distance is zero and the lateral shift is a minimizer. For instance, if $\X=\Y=\R$ are equipped with the Euclidean distance and $\nu(A)=\mu(b+A)$ for all Borel sets $A$ for some $b\in\R$, the map $x\mapsto x-b$ is a minimizer. This will be a special case of Theorem \ref{prop:cocounter}, which covers more general cost functions and marginals. 
In Theorems \ref{prop:pidia} and \ref{thm:p-cost}, we explicitly characterize the \textit{maximizers} of the transport cost that appears in \eqref{eq:GWpq} for $p=2$, $q\in[1,2]$, $\X=\Y=\R$ equipped with the Euclidean distance, 
    and symmetric marginals, where the symmetry is not needed for $q=1$. The GW literature also incorporates some earlier ideas discussed in this paper, such as existence of minimizers and Monge--Kantorovich equivalence (Proposition \ref{prop:existence uniqueness}), lower bounds on the transport cost (Propositions \ref{prop:lb from conditioning} and \ref{prop:lb from multimarginal}), and connections to QAP. These ideas also manifest in studies of extensions of the GW distance (\cite{arya2024gromov,bauer2024z,chowdhury2019gromov,memoli2011spectral,memoli2012some,memoli2023ultrametric}), sometimes leading to closed-form solutions. 
\end{example}

\begin{example}[Quadratic Assignment Problem (QAP)]\label{ex:QAP}
If the probability measures $\mu,\nu$ are each uniformly distributed on $N$ points, the Monge QOT problem reduces to QAP,  
 % \footnote{By separability we mean that the cost function is of the form $c(x,y,x',y')=c_1(x,x')c_2(y,y')$ for some $c_1,c_2$. If one does not impose separability of the cost function, the biquadratic assignment problem (\cite{burkard1993biquadratic}) is a special case of Monge QOT.} 
 which was first introduced by \cite{Koopmans:1957} as a model for the allocation problem of indivisible economic activities. This connection is well known in the GW literature; see, for instance, Remark 4.6 of \cite{memoli2011gromov}. 
 Recall the Koopmans--Beckmann problem from \eqref{eq:KB}. The work of \cite{lawler1963quadratic} proposed a generalization of the Koopmans--Beckmann QAP, where one solves
$$\min_{\sigma\in S_n}\sum_{i=1}^n\sum_{j=1}^nd_{ij\sigma_i\sigma_j},$$
where $D=\{d_{ijk\ell}\}_{1\leq i,j,k,\ell\leq n}$ is a given $4$-index cost array. 
Since any Monge transport map between $\mu$ and $\nu$ matches the $N$ elements bijectively, the Lawler QAP coincides with Monge QOT with discrete uniform marginals, and the Koopmans--Beckmann QAP further constrains that the cost function is of the form  $c(x,y,x',y')=c_1(x,x')c_2(y,y')$, thus type-XX. 
%Note that the Monge assumption is essential and cannot be relaxed in general (see Example \ref{ex:intro}). 
The QAP class includes many well-known combinatorial optimization problems, such as the traveling salesman problem (Section 7.1.2 of \cite{burkard1998quadratic}) and the campus planning model (\cite{dickey1972campus}). However, in general, even approximating QAP is NP-hard (\cite{loiola2007survey,queyranne1986performance}). For instance, QAP of size $n>30$ cannot be solved in a reasonable amount of time.
We refer to \cite{burkard2012assignment} and \cite{cela2013quadratic} for comprehensive surveys on QAP and various extensions of the problem.  
% Many well-known combinatorial optimization problems belong to the QAP class. A prominent example is the traveling salesman problem, which can be formulated as
% $$\min_{\sigma\in S_n\text{ cyclic}}\sum_{i=1}^na_{i\sigma_i}.$$
% This is equivalent to the Koopmans--Beckmann QAP with matrices $A=\{a_{ij}\}_{1\leq i,j\leq n}$ and $B$ being any cyclic permutation matrix; see Section 7.1.2 of \cite{burkard1998quadratic}.
% Another intriguing real-life application is the campus planning model (\cite{dickey1972campus}). The goal is to optimally allocate $n$ buildings on $n$ fixed sites on a campus, where $b_{ij}$ is the distance between sites $i,j$ and $a_{ij}$ is the traffic intensity between buildings $i,j$. The sum $\sum_{i=1}^n\sum_{j=1}^na_{ij}b_{\sigma_i\sigma_j}$ then represents the total commuting cost if the allocation is given by the permutation $\sigma$ (i.e., building $i$ is allocated to site $\sigma_i$). 
% Other particularly interesting applications include typewriter keyboard design (\cite{burkard1977entwurf}), the candidates' problem (\cite{lawler1963quadratic}), and hospital planning (\cite{elshafei1977hospital}).

%  In general, even approximating QAP is NP-hard (\cite{loiola2007survey,queyranne1986performance}). For instance, QAP of size $n>30$ cannot be solved in a reasonable amount of time. Recently, the graph neural network approach has been proposed to numerically solve QAP and more general problems (\cite{nowak2018revised,wang2019learning,wang2021neural}).
\end{example}

\begin{example}[Quadratic regularization of discrete optimal transport]\label{ex:qf-reg}
    Regularization is a modern technique in OT that facilitates computation by introducing strong convexity to the linear OT problem. The most prevalent choice is arguably the entropic regularized OT (EOT), which enables the Sinkhorn's algorithm and generates smoothness properties of the solution (\cite{nutz2021introduction}).
   %  Let $\varepsilon>0$ be the regularization parameter. The EOT problem is given by 
   %      \begin{align}
   %      \mbox{to minimize}\quad&\int c(x,y)\,\d \pi(x,y)+\varepsilon \int\log\Big(\frac{\d\pi}{\d \pi_{\mathrm{ind}}}(x,y)\Big)\,\d\pi(x,y)\label{eq:EOT}\\
   %      \mbox{subject to}\quad &\pi\in\Pi(\mu,\nu),\nonumber
   %  \end{align}
   %  where it is understood that any coupling $\pi\not\ll \pi_{\mathrm{ind}}$ has an infinite cost. 
   % % where the Kullback--Leibler divergence $h(\mathbb{Q}\mid\mathbb{P})$ is given by $\E^{\mathbb{Q}}[\log(\d \mathbb{Q}/\d\p)]$ if $\mathbb{Q}\ll \p$ and $\infty$ otherwise. 
   %  The solution to the EOT problem is diffusive in the sense that the support of the optimal coupling always equals the support of $\pi_{\mathrm{ind}}$.
    An alternate of EOT is given by the following quadratically regularized OT: 
    \begin{align}
        \mbox{to minimize}\quad&\int c(x,y)\,\d \pi(x,y)+\frac{\varepsilon}{2}\int\Big(\frac{\d\pi}{\d \pi_{\mathrm{ind}}}(x,y)\Big)^2\d \pi_{\mathrm{ind}}(x,y)\label{eq:qreg}\\
        \mbox{subject to}\quad &\pi\in\Pi(\mu,\nu),\nonumber
    \end{align}
    where by convention, the objective value is $\infty$ if $\pi\not\ll \pi_{\mathrm{ind}}$.
The quadratically regularized OT was introduced by \cite{blondel2018smooth} and \cite{essid2018quadratically} in the discrete setting, and rigorously studied by \cite{lorenz2021quadratically} in the continuous case. The authors highlighted that quadratically regularized OT gives rise to sparse couplings, a desirable property when the OT itself is of interest. 
% A rigorous development of the sparsity in terms of the regularization parameter is contained in a recent series of works (\cite{gonzalez2024sparsity,nutz2024quadratically,wiesel2024sparsity}). 
Another advantage of quadratically regularized OT over EOT is the allowance of small regularization parameters, as the computation for EOT is difficult for a small $\varepsilon$ (\cite{nutz2024quadratically}). 
In the discrete setting, suppose that $\mu$ has mass $\{p_i\}$ on points $\{x_i\}$ and $\nu$ has mass $\{q_i\}$ on points $\{y_i\}$. Denote also by $\pi_{ij}$ the mass of $\pi$ on $(x_i,y_j)$. The regularization term of \eqref{eq:qreg} can be written as
\begin{align*}
    \int\Big(\frac{\d\pi}{\d \pi_{\mathrm{ind}}}(x,y)\Big)^2\d \pi_{\mathrm{ind}}(x,y)=\sum_i\sum_j \frac{\pi_{ij}^2}{p_iq_j}
    &=\sum_{i,j}\sum_{k,\ell} \frac{1}{p_iq_j}\bone_{\{i=k,j=\ell\}}\pi_{ij}\pi_{k\ell}\\&=\iint \frac{\bone_{\{x=x',y=y'\}}}{\mu(\{x\})\nu(\{y\})}\d\pi(x,y)\,\d\pi(x',y'),
\end{align*}
which is a transport cost in QOT (type-XX) with the unique minimizer given by $\pi_{\rm ind}$, a consequence of Jensen's inequality. Since the classic OT is also a special case of QOT (Example \ref{ex:sep}), the quadratically regularized OT in the discrete case belongs to the QOT class.% However, this does not apply to the case of continuous marginals.
\end{example}

\section{General properties}
\label{sec:general}

{We now proceed to study general properties of QOT.}
 The results presented in this section hold for probability measures on general Polish spaces, except for the stability of QOT (Proposition \ref{prop:stability}), where we require $\X=\Y=\R$.

Convexity is an essential issue in classic OT theory, giving rise to many useful techniques such as duality and $c$-cyclical monotonicity. 
We start with a simple result that gives a sufficient condition for QOT to be convex. For a non-empty set $S$, we say that a symmetric {kernel} $\phi:S\times S\to \R$ is {a \textit{positive definite kernel}} if for any $n\in\N,~s_1,\dots,s_n\in S$, and $c_1,\dots,c_n\in\R$, it holds
$$\sum_{i=1}^n\sum_{j=1}^n c_ic_j\phi(s_i,s_j)\geq 0,$$
which is a generalization of positive semi-definite matrices. {A closely related concept is that of \emph{positive definite functions}, that is, functions $\varphi:\R\to\R$ 
satisfying  that $(x,y)\mapsto \varphi(x-y)$ is a positive definite kernel on $\R^2$.}
\begin{proposition}\label{prop:convex?}
    If the cost function $c$ satisfies $c(x,y,x',y')=\phi((x,y),(x',y'))$ for some $\phi:(\X\times\Y)^2\to\R$ that is bounded, continuous, and positive definite, then QOT is a convex optimization problem. In other words, the quadratic-form transport cost \eqref{eq:QOT cost} is a convex function of $\pi$.
\end{proposition}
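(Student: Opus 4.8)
The plan is to show that $\pi\mapsto \iint c\,\d\pi\otimes\d\pi$ is convex on $\Pi(\mu,\nu)$ by reducing the positive-definiteness hypothesis on $\phi$ to a statement about the sign of this quadratic form along line segments. Given two couplings $\pi_0,\pi_1\in\Pi(\mu,\nu)$ and $t\in[0,1]$, write $\pi_t=(1-t)\pi_0+t\pi_1$, which lies in $\Pi(\mu,\nu)$ since the constraint set is convex. Expanding the bilinear form,
\begin{align*}
\iint c\,\d\pi_t\otimes\d\pi_t &= (1-t)^2\iint c\,\d\pi_0\otimes\d\pi_0 + 2t(1-t)\iint c\,\d\pi_0\otimes\d\pi_1 \\
&\quad + t^2\iint c\,\d\pi_1\otimes\d\pi_1,
\end{align*}
using that $c(x,y,x',y')=\phi((x,y),(x',y'))$ is symmetric in its two pairs of arguments (positive definiteness forces $\phi$ symmetric). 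Then the second difference $(1-t)\,\Phi(\pi_0)+t\,\Phi(\pi_1)-\Phi(\pi_t)$, where $\Phi(\pi)=\iint c\,\d\pi\otimes\d\pi$, simplifies after cancellation to $t(1-t)\bigl[\Phi(\pi_0)-2\iint c\,\d\pi_0\otimes\d\pi_1+\Phi(\pi_1)\bigr]$. So convexity is equivalent to showing that the signed measure $\rho=\pi_1-\pi_0$ satisfies $\iint \phi\,\d\rho\otimes\d\rho\ge 0$.

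Next I would verify that $\rho=\pi_1-\pi_0$ is a finite signed measure on the compact... on the Polish space $\X\times\Y$ with total mass zero, and that boundedness and continuity of $\phi$ make $\iint\phi\,\d\rho\otimes\d\rho$ well-defined and finite. The remaining task is to pass from the pointwise positive-definiteness of $\phi$ (which is stated for finite sums $\sum c_ic_j\phi(s_i,s_j)\ge 0$) to the integral inequality $\iint\phi\,\d\rho\otimes\d\rho\ge 0$ for arbitrary finite signed measures $\rho$. The standard route is approximation: approximate $\rho$ weakly by finitely supported signed measures $\rho_n=\sum_i c_i^{(n)}\delta_{s_i^{(n)}}$; for each such discrete measure, $\iint\phi\,\d\rho_n\otimes\d\rho_n=\sum_{i,j}c_i^{(n)}c_j^{(n)}\phi(s_i^{(n)},s_j^{(n)})\ge 0$ directly by hypothesis; then use that $\phi$ is bounded and continuous so that $\rho_n\otimes\rho_n\to\rho\otimes\rho$ weakly implies convergence of the integrals. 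One clean way to produce such $\rho_n$ is to push $\rho$ forward under a partition of $\X\times\Y$ into small cells and place the mass of each cell at a representative point, controlling the error via uniform continuity of $\phi$ on the relevant compacts (or via a tightness argument using boundedness and the regularity of finite signed measures on Polish spaces).

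The main obstacle is precisely this last step — making the discretization rigorous for signed (not just positive) measures on a general Polish space, where one must be careful that the positive and negative parts are handled simultaneously and that the double-integral error terms are controlled uniformly. A convenient alternative, if available, is to invoke the integral form of Bochner-type / Schoenberg-type characterizations: a bounded continuous $\phi$ that is positive definite in the finite-sum sense is automatically positive definite in the integral sense against all finite signed measures; citing such a fact would let one skip the explicit approximation. Either way, once $\iint\phi\,\d\rho\otimes\d\rho\ge 0$ is established, $t(1-t)\ge 0$ gives $\Phi(\pi_t)\le(1-t)\Phi(\pi_0)+t\Phi(\pi_1)$, which is convexity of the transport cost in $\pi$, completing the proof.
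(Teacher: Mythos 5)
Your proposal is correct and follows essentially the same route as the paper: reduce convexity to nonnegativity of $\iint\phi\,\mathrm{d}\rho\otimes\mathrm{d}\rho$ for the signed measure $\rho=\pi_1-\pi_0$, then approximate $\rho$ weakly by finitely supported signed measures, use the finite-sum positive-definiteness on each approximant, and pass to the limit using boundedness and continuity of $\phi$ together with weak convergence of the tensor products (the paper cites Billingsley's Theorem 2.8 for the last step). One small point: symmetry of $\phi$ is not ``forced'' by positive definiteness here — it is built into the paper's definition of a positive definite function — but this does not affect the argument.
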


\begin{example}
    Denote by $\|\cdot\|$ the Euclidean norm. For $\bz,\bz'\in\R^d$, the kernels $(\bz,\bz')\mapsto e^{-\alpha\|\bz-\bz'\|}$ and $(\bz,\bz')\mapsto e^{-\alpha\|\bz-\bz'\|^2}$ are both positive definite for $\alpha>0$. Therefore, the QOT problem with the type-XX cost function $e^{-\alpha\|(x,y)-(x',y')\|}$ or $e^{-\alpha\|(x,y)-(x',y')\|^2}$ is convex. We will see more examples in Example \ref{ex:dia}, where we also show that QOT with cost function $e^{-\alpha\|(x,y)-(x',y')\|^2},\,\alpha\in(0,1/2]$ is minimized by the diamond transport.
\end{example}
 
Despite the above result, the majority of QOT problems are not convex in $\pi$. 
This non-convex structure of QOT prohibits the use of classic tools such as duality.\footnote{Notable exceptions include \citet[Theorem 4.2.5]{vayer2020contribution} and \cite[Theorem 1]{zhang2024gromov}, and the latter result also analyzes sample complexity of the $(2,2)$-GW distance; see Example \ref{ex:GW}.} 
In the rest of this section, we study the fundamental properties of QOT, which may not be convex. Specifically, we show that under certain assumptions, minimizers exist, the Monge optimal  transport cost is equivalent to the Kantorovich one, QOT on $\R$ is stable, and the independent coupling is rarely an optimizer.

To discuss the finiteness of the transport cost in QOT, denote by $   \mathcal C(\mu,\nu)$ the set
\begin{align*}
    \mathcal C(\mu,\nu) =\left\{c: (\mathfrak X \times \mathfrak Y )^2 \to \R ~\Big|~
    \genfrac{}{}{0pt}{}{c(x,y,x',y') \ge f(x,x') + g(y,y') \mbox{ everywhere}}{\mbox{for some $f\in L^1(\mu\otimes \mu)$ and $g\in L^1(\nu\otimes \nu)$}}\right\}.%\label{eq:Cmunu}
\end{align*}
Note that a cost function $c$ is in $   \mathcal C(\mu,\nu) $ 
if it is bounded from below, or if  it is lower semi-continuous and 
$\mu$ and $\nu$ are compactly supported. 
The next remark is immediate. 
\begin{fact}
\label{remark:finite}
If $c\in \mathcal C(\mu,\nu)$, then
the infimum of the quadratic-form  transport cost in \eqref{eq:QOT cost} is well-defined and not $-\infty$.
\end{fact}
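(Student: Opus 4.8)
The plan is to isolate the lower bound appearing in the definition of $\mathcal C(\mu,\nu)$ and exploit that it depends on the variables in a \emph{decoupled} way, so that integrating it against $\pi\otimes\pi$ produces a finite constant that does not depend on $\pi$. Fix $c\in\mathcal C(\mu,\nu)$ and pick $f\in L^1(\mu\otimes\mu)$ and $g\in L^1(\nu\otimes\nu)$ with $c(x,y,x',y')\ge f(x,x')+g(y,y')$ for all $(x,y,x',y')$; write $L(x,y,x',y'):=f(x,x')+g(y,y')$. The observation to record first is that, since for every $\pi\in\Pi(\mu,\nu)$ the pushforward of $\pi\otimes\pi$ onto the first and third coordinates equals $\mu$ and onto the second and fourth equals $\nu$, one has
$$\iint L\,\d\pi\otimes\d\pi=\int f\,\d(\mu\otimes\mu)+\int g\,\d(\nu\otimes\nu)=:C_0\in\R,$$
a finite real number independent of $\pi$.

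The remaining argument has two short steps. First, since $c-L\ge0$ everywhere and is measurable (using the standing measurability assumption on $c$ together with that of $f,g$), the integral $\iint(c-L)\,\d\pi\otimes\d\pi$ is unambiguously defined as an element of $[0,+\infty]$ for every $\pi\in\Pi(\mu,\nu)$. Second, decomposing $c=(c-L)+L$ and using the previous display,
$$\iint c\,\d\pi\otimes\d\pi=\iint(c-L)\,\d\pi\otimes\d\pi+C_0,$$
which shows that the left-hand side is well-defined in $(-\infty,+\infty]$ — in particular no indeterminate form $\infty-\infty$ can arise — and that it is bounded below by $C_0$. Taking the infimum over $\pi\in\Pi(\mu,\nu)$ then gives $\inf_\pi\iint c\,\d\pi\otimes\d\pi\ge C_0>-\infty$, as claimed.

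I do not anticipate a genuine obstacle: this is essentially a bookkeeping fact, and the only subtlety is ensuring the transport cost $\iint c\,\d\pi\otimes\d\pi$ is meaningful to begin with, which is exactly what the splitting $c=(c-L)+L$ handles — it writes $c$ as a nonnegative piece whose integral is always defined (possibly $+\infty$) plus an $L^1$ piece whose integral is the finite constant $C_0$. For completeness I would also note, echoing the remark just before the statement, that the two cases of interest are covered: if $c$ is bounded below one takes $f$ constant and $g\equiv0$; and if $c$ is lower semicontinuous with $\mu,\nu$ compactly supported, then $c$ is bounded below on $(\supp\mu\times\supp\nu)^2$, on which $\pi\otimes\pi$ is concentrated, so the same reasoning applies.
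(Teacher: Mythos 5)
Your proof is correct, and it spells out precisely the argument the paper treats as immediate (the paper gives no explicit proof, stating only ``The next remark is immediate''): split $c=(c-L)+L$ with $L(x,y,x',y')=f(x,x')+g(y,y')$, note that the $(x,x')$- and $(y,y')$-marginals of $\pi\otimes\pi$ are $\mu\otimes\mu$ and $\nu\otimes\nu$ so $\iint L\,\d\pi\otimes\d\pi$ is a finite constant $C_0$ independent of $\pi$, and observe that the nonnegative remainder $c-L$ has a well-defined integral in $[0,\infty]$, giving a value in $(-\infty,\infty]$ bounded below by $C_0$.
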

The next result gives conditions under which minimizers of QOT exist and under which Monge is equivalent to Kantorovich. Similar results have been established for special cases such as the GW distance (\citet[Corollary 10.1]{memoli2011gromov} and \citet[Theorem 3.2]{memoli2024comparison}; see Example \ref{ex:GW} for the formulation) and its extensions (e.g., \citet[Theorem 2]{bauer2024z}). We also refer to Section 3 of \cite{memoli2024comparison} for further results that compare the Monge and Kantorovich problems in the case of GW costs.

\begin{proposition}\label{prop:existence uniqueness}
    Suppose that $c\in \mathcal C(\mu,\nu)$ is lower semi-continuous. Then a minimizer of \eqref{eq:QOT cost} exists. In particular, if $\mu$ is atomless, $c$ is continuous, and $\X,\Y$ are compact, then
    $$\min_{\pi\in\Pi(\mu,\nu)}\iint c\,\d\pi\otimes\d\pi={\inf_{T\in\T(\mu,\nu)}\iint c(x,T(x),x',T(x'))\,\d\mu(x)\,\d\mu(x').}$$
\end{proposition}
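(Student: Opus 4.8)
The plan is to prove the two assertions separately. For the existence of a minimizer, I would argue by the direct method in the calculus of variations. First, recall that since $\X,\Y$ are Polish and $\mu,\nu$ are fixed, the set $\Pi(\mu,\nu)$ is tight (each marginal is tight, and a coupling is tight iff its marginals are), hence relatively compact in the weak topology by Prokhorov's theorem; it is also weakly closed, so $\Pi(\mu,\nu)$ is weakly compact. Next, observe that the map $\pi\mapsto \pi\otimes\pi$ is continuous from $\Pi(\mu,\nu)$ with the weak topology into $\mathcal P\big((\X\times\Y)^2\big)$ with the weak topology: if $\pi_n\to\pi$ weakly then $\pi_n\otimes\pi_n\to\pi\otimes\pi$ weakly (test against products of bounded continuous functions and use a density/Stone--Weierstrass argument, or invoke the standard fact that weak convergence is preserved under products). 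Then I would show the functional $\pi\mapsto \iint c\,\d\pi\otimes\d\pi$ is lower semi-continuous on $\Pi(\mu,\nu)$: since $c\in\mathcal C(\mu,\nu)$, write $c = (c - f - g) + f + g$ where $f\in L^1(\mu\otimes\mu)$, $g\in L^1(\nu\otimes\nu)$, $c-f-g\ge 0$; the term $\iint (f(x,x')+g(y,y'))\,\d\pi\otimes\d\pi = \iint f\,\d\mu\otimes\d\mu + \iint g\,\d\nu\otimes\d\nu$ is a constant independent of $\pi$, and for the nonnegative lower semi-continuous part one uses the portmanteau-type fact that integration of a nonnegative lower semi-continuous function is lower semi-continuous under weak convergence (approximate from below by bounded continuous functions and apply monotone convergence). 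Combining compactness with lower semi-continuity, a minimizer exists.

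For the second assertion (Monge equals Kantorovich), under the extra hypotheses $\mu$ atomless, $c$ continuous, $\X,\Y$ compact, I would show that the set $\T(\mu,\nu)$ of transport maps is weakly dense in $\Pi(\mu,\nu)$ — more precisely, that every $\pi\in\Pi(\mu,\nu)$ is a weak limit of couplings $(\mathrm{id},T_n)_\#\mu$ induced by transport maps $T_n\in\T(\mu,\nu)$. This is a classical density result (sometimes attributed to the circle of ideas around Birkhoff's theorem / the density of Monge maps in the atomless setting; see e.g. Ambrosio--Pratelli, or Pratelli's approximation theorem): since $\mu$ is atomless and $\X,\Y$ are Polish, any coupling can be approximated in the weak topology by deterministic plans with the same marginals. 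Granting this, continuity of the cost and compactness of $\X,\Y$ (so $c$ is bounded and uniformly continuous) give continuity of $\pi\mapsto\iint c\,\d\pi\otimes\d\pi$ on all of $\Pi(\mu,\nu)$ — here we again use that $\pi_n\to\pi$ weakly implies $\pi_n\otimes\pi_n\to\pi\otimes\pi$ weakly, and now $c$ is bounded continuous so the integral converges. Therefore
$$
\inf_{T\in\T(\mu,\nu)}\iint c\,\d\pi_T\otimes\d\pi_T \;\le\; \iint c\,\d\pi\otimes\d\pi \quad\text{for every }\pi\in\Pi(\mu,\nu)
$$
by taking limits along an approximating sequence, whence the Monge infimum is $\le$ the Kantorovich minimum; the reverse inequality is trivial since $\T(\mu,\nu)\subseteq\Pi(\mu,\nu)$ (identifying a map with its induced plan), so the two are equal.

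The main obstacle I anticipate is the lower semi-continuity step in the general (merely lower semi-continuous $c$) case: one must be careful that $c$ is only bounded below by $f+g$ with $f,g$ integrable but not necessarily continuous or bounded, so the standard ``lsc integrand + weak convergence $\Rightarrow$ lsc functional'' lemma does not apply verbatim. The fix is the decomposition above — subtracting the affine-in-$\pi\otimes\pi$ correction $f(x,x')+g(y,y')$, whose integral against $\pi\otimes\pi$ is a constant depending only on $\mu,\nu$, reduces everything to a genuinely nonnegative lower semi-continuous integrand, for which the lemma is standard. A secondary point requiring care is the precise statement and citation of the Monge-density/approximation theorem under the atomless hypothesis; this is where I would lean on the classical OT literature (e.g. \cite{Santambrogio:2015}) rather than reproving it.
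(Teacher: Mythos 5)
Your proof follows essentially the same route as the paper's: weak compactness of $\Pi(\mu,\nu)$ via Prokhorov, continuity of $\pi\mapsto\pi\otimes\pi$ under weak convergence (the paper cites Billingsley, Theorem 2.8), lower semi-continuity of the QOT functional, and weak density of Monge maps in $\Pi(\mu,\nu)$ under the atomless/compact hypotheses (the paper cites Santambrogio, Theorem 1.32). One caveat on your lower semi-continuity step: you decompose $c = (c-f-g) + (f+g)$ and call $c-f-g$ a ``genuinely nonnegative lower semi-continuous integrand.'' Nonnegativity holds, but lower semi-continuity of $c-f-g$ does not follow: $f$ and $g$ are only assumed to be in $L^1$, not upper semi-continuous, so subtracting them from the lsc function $c$ need not produce an lsc function. (In the classic OT analogue, Villani's existence theorem requires the lower-bound functions $a,b$ to be upper semi-continuous for precisely this reason.) You correctly flag the ``not bounded'' half of the difficulty, but the decomposition alone does not cure the ``not continuous'' half. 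The paper's own proof is equally terse at this point, citing only the Portmanteau lemma, so this is a shared imprecision rather than a defect unique to your argument — a fully rigorous version would either strengthen $\C(\mu,\nu)$ to require $f,g$ upper semi-continuous, or run a truncation-and-monotone-convergence argument in the style of the classic OT existence proof.
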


The minimizer in Proposition \ref{prop:existence uniqueness} may not be unique even in many non-trivial cases, which we will see later.

We next show that, similar to classic OT, QOT satisfies stability with respect to the marginals.

\begin{proposition}\label{prop:stability}
    Suppose that $\mu,\nu\in\M(\R)$ and $\mu_n\to\mu,~\nu_n\to\nu$ weakly. Let $c:\R^4\to\R$ be a continuous function satisfying the uniform integrability condition
    \begin{align}
        \sup_{\pi\in\Pi(\mu,\nu)}\iint |c(x,y,x',y')|^{1+\delta}\d\pi(x,y)\,\d\pi(x',y')<\infty.\label{eq:uniform integrable}
    \end{align}
    for some $\delta>0$. 
    Let $\pi_n\in\Pi(\mu_n,\nu_n)$ be any QOT minimizer with cost function $c$. Then the sequence $\{\pi_n\}_{n\in\N}$ admits weak limit points in $\Pi(\mu,\nu)$ and every weak limit point of $\{\pi_n\}_{n\in\N}$ is a QOT minimizer with cost function  $c$ and marginals $\mu,\,\nu$.
\end{proposition}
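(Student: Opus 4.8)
I would run the classical $\Gamma$-convergence template used to establish stability of classic OT, adapted to the quadratic-form functional $J_c(\pi):=\iint c\,\d\pi\otimes\d\pi$. Three ingredients are needed: (i) precompactness of the family of minimizers $\{\pi_n\}$, with all weak limit points having marginals $\mu,\nu$; (ii) continuity of $J_c$ along weakly convergent sequences of couplings, driven by \eqref{eq:uniform integrable}; and (iii) a recovery-sequence construction showing $\Pi(\mu_n,\nu_n)$ ``converges'' to $\Pi(\mu,\nu)$. Granting these, write $V_n:=\inf_{\pi\in\Pi(\mu_n,\nu_n)}J_c(\pi)=J_c(\pi_n)$ and $V:=\inf_{\pi\in\Pi(\mu,\nu)}J_c(\pi)$. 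If $\pi_{n_k}\to\pi^*$ weakly, then $\pi^*\in\Pi(\mu,\nu)$ by (i) and $J_c(\pi_{n_k})\to J_c(\pi^*)$ by (ii), so $J_c(\pi^*)=\lim_k V_{n_k}$; meanwhile (iii) applied to a minimizing sequence $\pi^{(j)}$ with $J_c(\pi^{(j)})\to V$ gives $\limsup_n V_n\le J_c(\pi^{(j)})\to V$. Combining, $V\le J_c(\pi^*)=\lim_k V_{n_k}\le\limsup_n V_n\le V$, so $J_c(\pi^*)=V$ and $\pi^*$ is a minimizer for $(\mu,\nu)$ (and, as a byproduct, $V_n\to V$).

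For (i): weak convergence makes $\{\mu_n\}$ and $\{\nu_n\}$ tight by Prokhorov, and if $\mu_n(K)\ge 1-\epsilon$, $\nu_n(L)\ge 1-\epsilon$ then $\pi_n(K\times L)\ge 1-2\epsilon$ for every $\pi_n\in\Pi(\mu_n,\nu_n)$; hence $\{\pi_n\}$ is tight and admits weak limit points. Pushing a weak limit $\pi^*=\lim_k\pi_{n_k}$ forward through the continuous coordinate projections identifies its marginals as $\lim_k\mu_{n_k}=\mu$ and $\lim_k\nu_{n_k}=\nu$, so $\pi^*\in\Pi(\mu,\nu)$.

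For (ii): if $\pi_{n_k}\to\pi^*$ weakly on $\R^2$ then $\pi_{n_k}\otimes\pi_{n_k}\to\pi^*\otimes\pi^*$ weakly on $\R^4$. With the truncation $c_R:=(c\wedge R)\vee(-R)$, which is bounded and continuous, $\iint c_R\,\d\pi_{n_k}\otimes\d\pi_{n_k}\to\iint c_R\,\d\pi^*\otimes\d\pi^*$; the truncation error obeys $\iint_{\{|c|>R\}}|c|\,\d\varpi\le R^{-\delta}\iint|c|^{1+\delta}\,\d\varpi$, which is uniformly small over $\varpi\in\{\pi_{n_k}\otimes\pi_{n_k}\}_k\cup\{\pi^*\otimes\pi^*\}$ thanks to \eqref{eq:uniform integrable} (applied also along the perturbed marginals — automatic when the $\mu_n,\nu_n$ share a common compact support, in which case $c$ is bounded on the relevant set and this step is immediate). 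Letting $R\to\infty$ gives $J_c(\pi_{n_k})\to J_c(\pi^*)$, and the same argument yields continuity of $J_c$ along any weakly convergent sequence in $\bigcup_n\Pi(\mu_n,\nu_n)$.

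For (iii): given $\pi\in\Pi(\mu,\nu)$, I would produce $\tilde\pi_n\in\Pi(\mu_n,\nu_n)$ with $\tilde\pi_n\to\pi$ weakly via gluing. Take the quantile couplings $\alpha_n:=(Q_\mu,Q_{\mu_n})_*\mathrm{U}\in\Pi(\mu,\mu_n)$ and $\beta_n:=(Q_\nu,Q_{\nu_n})_*\mathrm{U}\in\Pi(\nu,\nu_n)$; since $\mu_n\to\mu$, $\nu_n\to\nu$ weakly, $Q_{\mu_n}\to Q_\mu$ and $Q_{\nu_n}\to Q_\nu$ Lebesgue-a.e., so $\alpha_n\to(\mathrm{id},\mathrm{id})_*\mu$ and $\beta_n\to(\mathrm{id},\mathrm{id})_*\nu$ weakly. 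Disintegrating and gluing $\alpha_n$ to $\pi$ along the $\X$-coordinate and $\beta_n$ along the $\Y$-coordinate yields a measure on $(\X\times\Y)^2$ whose $(x,y)$-marginal is $\pi$ and whose $(x',y')$-marginal $\tilde\pi_n$ lies in $\Pi(\mu_n,\nu_n)$; weak continuity of the gluing map plus concentration of $\alpha_n,\beta_n$ on the diagonals forces $\tilde\pi_n\to\pi$. Then $V_n\le J_c(\tilde\pi_n)\to J_c(\pi)$ by (ii), as used above. The main obstacle is step (ii): making sure \eqref{eq:uniform integrable} is exactly what is required to upgrade the weak convergence $\pi_{n_k}^{\otimes2}\to(\pi^*)^{\otimes2}$ to convergence of the integrals of the possibly unbounded $c$ uniformly along the perturbed marginals; the gluing in (iii) is classical but also needs care to state correctly for general (possibly atomic) $\mu,\nu$, where an auxiliary randomization replaces the naive map $Q_{\mu_n}\circ F_\mu$.
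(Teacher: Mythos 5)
Your plan follows the same three-step template the paper uses (tightness of the minimizers, convergence of the quadratic-form functional along weakly convergent sequences via \eqref{eq:uniform integrable}, and a recovery-sequence construction), and the first two steps match the paper's argument almost verbatim (Prokhorov for tightness, Theorem 2.8 of Billingsley for $\pi_n\otimes\pi_n\to\pi\otimes\pi$, and uniform integrability for passing to the limit in $\iint c$). The one genuinely different ingredient is the recovery sequence: the paper invokes Sklar's theorem to extract a copula $C$ of the target coupling $\hat\pi$ and then simply sets $\pi_n'$ to have cdf $C(F_{\mu_n},F_{\nu_n})$, which immediately lies in $\Pi(\mu_n,\nu_n)$ and converges weakly to $\hat\pi$; you instead build $\tilde\pi_n$ by gluing quantile couplings $\alpha_n\in\Pi(\mu,\mu_n)$, $\beta_n\in\Pi(\nu,\nu_n)$ to $\pi$. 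Both routes work, but the copula route is cleaner on $\R$: it is a one-liner, handles atoms automatically (Sklar produces a copula even when $F_\mu,F_\nu$ are discontinuous), and avoids the conditional-independence bookkeeping that the four-way gluing requires. Your own caveat at the end about atomic marginals requiring ``an auxiliary randomization'' is exactly the complication the paper's copula construction sidesteps.

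One remark on a concern you flag honestly: you observe that \eqref{eq:uniform integrable} is stated as a supremum over $\Pi(\mu,\nu)$, whereas step (ii) applies it to $\pi_n\in\Pi(\mu_n,\nu_n)$ and to the recovery sequence $\tilde\pi_n$. This is a real subtlety, and the paper's proof treats it the same way you do (citing \cite[Theorem 2.20]{van2000asymptotic} and implicitly reading the uniform integrability as holding along the approximating couplings as well); so you are not missing anything relative to the paper, but you are right that the hypothesis as written technically addresses only the limiting marginals.
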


Stability is crucial in classic OT theory as it ensures that numerical algorithms for solving the OT problem converge consistently. On the other hand, numerically computing or approximating the QOT solution remains a difficult task, as the discretized version remains an NP-hard problem (\cite{loiola2007survey}), a fact already noted in \citet[Remark 4.6]{memoli2011gromov}. The recent work of \cite{kravtsova2024np} also indicates the NP-hardness of the GW distance. As a discrete Monge version of QOT, QAP provides feasible heuristic algorithms; see \cite{burkard2012assignment} and \cite{cela2013quadratic}. However, one needs to be careful here since discrete QOT may not always have a Monge minimizer even if a bijective transport map exists (see Example \ref{ex:intro}). We leave the computational aspects of QOT for further investigation.

Lower bounds on  transport costs in QOT can be easily obtained from classic OT by either a two-step optimization or an optimization over aggregated marginals, which we summarize in the next two simple results.
Let $\C_c(\mu,\nu)$ denote the classic optimal transport cost from $\mu$ to $\nu$ with cost function $c$, i.e.,
$$\C_c(\mu,\nu):=\inf_{\pi\in\Pi(\mu,\nu)}\int c(x,y)\,\d \pi(x,y). $$

\begin{proposition}\label{prop:lb from conditioning}
Suppose that $c\in \mathcal C(\mu,\nu)$. It holds that
$$\inf_{\pi\in\Pi(\mu,\nu)}\iint c\,\d\pi\otimes\d\pi\geq \C_{\hat{c}}(\mu,\nu),$$
where $\hat{c}(x,y)=\C_{c_{x,y}}(\mu,\nu)$ and $c_{x,y}(x',y')=c(x,y,x',y')$.    
Moreover, if there exists $\pi_*\in \Pi(\mu,\nu)$ such that $\pi_*$ is the optimal coupling for both cost functions $\hat{c}$ and $c_{x,y}$ for $\pi_*$-a.e.~$(x,y)$, then $\pi_*$ is a minimizer of the QOT problem with cost function $c$.
\end{proposition}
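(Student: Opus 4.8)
The plan is to bound the QOT cost from below by rewriting the double integral as a nested single integral and applying the definition of classic OT cost at each stage. First I would fix $\pi\in\Pi(\mu,\nu)$ and use Fubini (justified since $c\in\mathcal C(\mu,\nu)$, so the negative part is $\pi\otimes\pi$-integrable) to write
\begin{align*}
\iint c\,\d\pi\otimes\d\pi=\int\left(\int c(x,y,x',y')\,\d\pi(x',y')\right)\d\pi(x,y).
\end{align*}
For each fixed $(x,y)$, the inner integral is $\int c_{x,y}(x',y')\,\d\pi(x',y')$, and since $\pi\in\Pi(\mu,\nu)$ this is at least $\C_{c_{x,y}}(\mu,\nu)=\hat c(x,y)$ by the very definition of the classic optimal transport cost. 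Substituting this lower bound gives
\begin{align*}
\iint c\,\d\pi\otimes\d\pi\geq\int\hat c(x,y)\,\d\pi(x,y)\geq\inf_{\pi'\in\Pi(\mu,\nu)}\int\hat c\,\d\pi'=\C_{\hat c}(\mu,\nu),
\end{align*}
again since $\pi\in\Pi(\mu,\nu)$. Taking the infimum over $\pi$ on the left yields the claimed inequality.

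For the second assertion, I would trace through when both inequalities above are equalities for the specific coupling $\pi_*$. The hypothesis says $\pi_*$ is optimal for $c_{x,y}$ for $\pi_*$-a.e.\ $(x,y)$, i.e.\ $\int c_{x,y}\,\d\pi_*=\hat c(x,y)$ for $\pi_*$-a.e.\ $(x,y)$; integrating against $\pi_*$ over $(x,y)$ makes the first inequality an equality when $\pi=\pi_*$. The hypothesis also says $\pi_*$ is optimal for $\hat c$, i.e.\ $\int\hat c\,\d\pi_*=\C_{\hat c}(\mu,\nu)$, which makes the second inequality an equality. Hence $\iint c\,\d\pi_*\otimes\d\pi_*=\C_{\hat c}(\mu,\nu)$, and since $\C_{\hat c}(\mu,\nu)$ is a lower bound for the QOT cost of every coupling, $\pi_*$ attains the infimum and is therefore a minimizer.

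The only genuinely delicate point is the measurability and integrability bookkeeping: one must check that $(x,y)\mapsto\hat c(x,y)=\C_{c_{x,y}}(\mu,\nu)$ is measurable (so that $\int\hat c\,\d\pi$ and $\C_{\hat c}(\mu,\nu)$ make sense) and that the applications of Fubini are legitimate. Both follow from $c\in\mathcal C(\mu,\nu)$: the domination $c(x,y,x',y')\geq f(x,x')+g(y,y')$ with $f\in L^1(\mu\otimes\mu),\,g\in L^1(\nu\otimes\nu)$ controls the negative part uniformly over couplings (as in Fact \ref{remark:finite}), and measurability of $\hat c$ follows from a measurable selection / lower-semicontinuity argument for the parametrized family of OT problems $\{c_{x,y}\}$ — this is standard but is the one step where a careful reader would want the details. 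Everything else is a direct unwinding of definitions, and I do not anticipate any real obstacle.
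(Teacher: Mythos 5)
Your argument is exactly the paper's proof: Fubini--Tonelli to write the double integral as a nested integral, then applying the classic OT lower bound to the inner integral (giving $\hat c$) and again to $\int\hat c\,\d\pi$, with the second claim obtained by tracing through when both inequalities become equalities. The additional measurability remarks are sensible but do not change the route.
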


\begin{proposition}\label{prop:lb from multimarginal}
Suppose that the cost function $c$ is type-XX, i.e., of the form
$$c(x,y,x',y')=h(f(x,x'),g(y,y'))$$
for some $h:\R^2\to\R,~f:\X^2\to\R$, and $g:\Y^2\to\R$. 
    It holds  that
    \begin{align*}
        &\inf_{\pi\in\Pi(\mu,\nu)}\iint c\,\d\pi\otimes\d\pi\geq \inf_{\hat{\pi}\in\Pi(\mu_f,\nu_g)}\int h(\xi,\zeta)\,\d\hat{\pi}(\xi,\zeta)=\C_{h}(\mu_f,\nu_g),
    \end{align*}
    where $\mu_f$ is the law of $f(X,X')$ for $X,X'$ independent following law $\mu$, and $\nu_g$ is the law of $g(Y,Y')$ for $Y,Y'$ independent following law $\nu$.
\end{proposition}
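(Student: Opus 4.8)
The plan is to obtain the inequality by ``forgetting'' the finer structure of the coupling $\pi$ and keeping only the pushforward under the map $(x,y,x',y')\mapsto(f(x,x'),g(y,y'))$. Fix any $\pi\in\Pi(\mu,\nu)$ and let $\mathbf Z=(X,Y)$ and $\mathbf Z'=(X',Y')$ be i.i.d.\ with law $\pi$, so that the transport cost equals $\E[h(f(X,X'),g(Y,Y'))]$. Define $\hat\pi$ to be the law of the pair $(f(X,X'),g(Y,Y'))$ on $\R^2$. Then
\begin{align*}
\iint c\,\d\pi\otimes\d\pi=\E[h(f(X,X'),g(Y,Y'))]=\int h(\xi,\zeta)\,\d\hat\pi(\xi,\zeta).
\end{align*}
The crux is to identify the two marginals of $\hat\pi$: the first marginal is the law of $f(X,X')$ where $X,X'$ are i.i.d.\ with law $\mu$ (since $X,X'$ are the first coordinates of the independent pair $\mathbf Z,\mathbf Z'$), which is exactly $\mu_f$; likewise the second marginal is $\nu_g$. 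Hence $\hat\pi\in\Pi(\mu_f,\nu_g)$, and therefore $\int h\,\d\hat\pi\ge\inf_{\hat\pi\in\Pi(\mu_f,\nu_g)}\int h\,\d\hat\pi=\C_h(\mu_f,\nu_g)$. Taking the infimum over $\pi\in\Pi(\mu,\nu)$ on the left yields the claim.

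The only subtlety I anticipate is a measure-theoretic bookkeeping one: one should check that $f$ and $g$ are measurable (which we may assume under the standing measurability conventions of the paper, footnote on \eqref{eq:QOT cost}) so that $\hat\pi$ is a well-defined Borel probability measure on $\R^2$, and that the integral $\int h\,\d\hat\pi$ is meaningful, i.e.\ that the right-hand side $\C_h(\mu_f,\nu_g)$ is not $+\infty$ in a way that makes the inequality vacuous or, conversely, that the left-hand integral is well-defined. Since we are proving a lower bound, no integrability hypothesis beyond what makes \eqref{eq:QOT cost} meaningful is actually needed: if the left-hand infimum is $+\infty$ there is nothing to prove, and otherwise the identity above shows the finite value dominates $\C_h(\mu_f,\nu_g)$. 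I would state this explicitly in one line rather than invoking $\mathcal C(\mu,\nu)$, since (unlike Proposition \ref{prop:lb from conditioning}) the type-XX structure already guarantees the reduction works without a lower-bound assumption on $c$.

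In short, the argument is a one-line change of variables plus the observation that independence of $\mathbf Z$ and $\mathbf Z'$ is precisely what forces the marginals of the pushforward to be $\mu_f$ and $\nu_g$; there is no genuine obstacle, and the main point worth emphasizing for the reader is this marginal identification, which is the step that uses the defining feature of QOT (the $\pi\otimes\pi$ structure) as opposed to a general coupling on $(\X\times\Y)^2$.
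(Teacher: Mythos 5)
Your proof is correct and takes essentially the same route as the paper's: both observe that pushing $\pi\otimes\pi$ forward under $(x,y,x',y')\mapsto(f(x,x'),g(y,y'))$ produces an element of $\Pi(\mu_f,\nu_g)$, with the marginal identification forced by the independence of $\mathbf Z$ and $\mathbf Z'$. The paper phrases this via an intermediate set $\Pi_{f,g}$ of measures on $(\X\times\Y)^2$ with the right pushforward marginals, whereas you apply the pushforward directly to $\pi\otimes\pi$; your version is marginally tighter since it avoids asserting (and having to justify) the reverse containment needed for the paper's stated equality between the two infima, which is not actually used for the one-sided bound.
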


 The  lower bounds in Propositions \ref{prop:lb from conditioning} and \ref{prop:lb from multimarginal} are generally not sharp, but they are useful in proving the optimality of some transport plans. The essential ideas of these lower bounds are present in the literature on QAP and GW distances. More precisely, Proposition \ref{prop:lb from conditioning} generalizes Lawler's lower bound on QAP (\cite{burkard2012assignment}) and is known as the ``third lower bound'' in the GW setting (\cite{memoli2007use,memoli2011gromov}); Proposition \ref{prop:lb from multimarginal} is an extension of the ``second lower bound'' for the GW distance. Examples \ref{ex:GW} and \ref{ex:QAP} detail the connections between GW distance and QAP to QOT.

\section{Explicit solutions between measures on the real line}\label{sec:explicit}

In this section, we discuss a few instances where the QOT problem for $\mu,\nu \in \M(\R)$ allows for an explicit solution. 
Most of our results will be built on the lower bounds obtained in Section \ref{sec:general}. More precisely, the strategy is to show that certain lower bounds are achieved by specific transport plans (such as the comonotone coupling). Further results where the minimizer is attained by the diamond transport will be discussed in Section \ref{sec:diamond}.

\subsection{Type-XY product of two quadratic cost functions}

A natural class of cost functions to consider in OT theory is the quadratic ones. For instance, martingale optimal transport with a quadratic cost function is trivial. In the QOT framework, we consider a cost function $c(x,y,x',y')$ that is a quadratic function of four variables. After adding   terms that do not depend on the coupling, any such cost function $c$ is equivalent to one of the form $c(x,y,x',y')=f(x,y)g(x',y')$, where $f,g$ are quadratic functions of the two variables. 

We describe an algorithm that explicitly solves QOT problems whose cost function is of the form $c(x,y,x',y')=f(x,y)g(x',y')$, where $f,g$ are quadratic. This cost function is type-XY. \citet[Theorem 4.2.4]{vayer2020contribution} studies a special case $c(x,y,x',y')=xyx'y'$. Recall the notation  $\pi_{\rm x}^\lambda=\lambda\pi_{\rm com}+(1-\lambda)\pi_{\rm ant}$ for $\lambda\in[0,1]$, which is  called an X-transport if $\lambda\in (0,1)$.

\begin{proposition}\label{ex:quadratic cost}
   \sloppy  Suppose that $\mu,\nu\in\M(\R)$. If the cost function $c$ is given by \begin{equation}
   \label{eq:QC}
       c(x,y,x',y')=f(x,y)g(x',y'), \mbox{  ~~ where $f,g$ are quadratic functions},
   \end{equation}
   then there exists a QOT minimizer  $\pi_{\rm x}^\lambda$ for some $\lambda\in [0,1]$.
   Moreover, if $\mu,\nu$ are not degenerate, then   
      every    $\pi_{\rm x}^\lambda$ 
   minimizes the quadratic-form transport cost  for some  cost function in \eqref{eq:QC}  uniquely among the class $(\pi_{\rm x}^\lambda)_{\lambda \in [0,1]}$.  
\end{proposition}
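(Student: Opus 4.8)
The plan is to reduce the quadratic-form cost $\iint fg\,\d\pi\otimes\d\pi = \left(\int f\,\d\pi\right)\left(\int g\,\d\pi\right)$ to a two-dimensional problem. Writing $f(x,y) = a_1x^2 + a_2y^2 + a_3xy + a_4x + a_5y + a_6$ and similarly for $g$, the quantity $\int f\,\d\pi$ depends on $\pi$ only through the two numbers $s:=\int xy\,\d\pi$ and $t:=\int |x-y|^2\,\d\pi$ (or any two affine-independent functionals that, together with the marginal moments which are fixed by $\mu,\nu$, span the relevant linear functionals of $\pi$) — in fact, because $\int x^2\,\d\pi,\int y^2\,\d\pi,\int x\,\d\pi,\int y\,\d\pi$ and the constant are all determined by the marginals, $\int f\,\d\pi = \alpha \int xy\,\d\pi + \beta$ for constants $\alpha,\beta$ depending on $f,\mu,\nu$, and similarly $\int g\,\d\pi = \gamma\int xy\,\d\pi + \delta$. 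Hence the transport cost is a quadratic polynomial $P(s) = (\alpha s + \beta)(\gamma s + \delta)$ in the single scalar $s = \int xy\,\d\pi = \cov_\pi(X,Y) + (\text{const})$.

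First I would establish that, as $\pi$ ranges over $\Pi(\mu,\nu)$, the attainable set of values of $s$ is exactly a closed interval $[s_{\min}, s_{\max}]$, with $s_{\min}$ attained by $\pi_{\rm ant}$ and $s_{\max}$ by $\pi_{\rm com}$ (Hoeffding--Fréchet: $\int xy\,\d\pi$ is maximized by comonotonicity and minimized by antimonotonicity, among all couplings of $\mu,\nu$), and moreover every value in between is attained by some $\pi_{\rm x}^\lambda = \lambda\pi_{\rm com} + (1-\lambda)\pi_{\rm ant}$, since $\lambda\mapsto \int xy\,\d\pi_{\rm x}^\lambda$ is affine and continuous, running from $s_{\min}$ to $s_{\max}$. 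Then minimizing the continuous function $\lambda\mapsto P(s(\lambda))$ over the compact interval $[0,1]$ yields a minimizer at some $\lambda^\star\in[0,1]$, and since $\pi_{\rm x}^{\lambda^\star}$ achieves the same value of $s$ as the true minimizer (whatever it is), $\pi_{\rm x}^{\lambda^\star}$ is a global QOT minimizer. This gives the first assertion.

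For the second ("moreover") assertion, I want: given any $\lambda_0\in[0,1]$ and non-degenerate $\mu,\nu$, to exhibit a cost function of the form \eqref{eq:QC} for which $\pi_{\rm x}^{\lambda_0}$ is the \emph{unique} minimizer within the family $(\pi_{\rm x}^\lambda)_{\lambda\in[0,1]}$. Since the transport cost along this family is $P(s(\lambda))$ with $s(\lambda) = s_{\min} + \lambda(s_{\max}-s_{\min})$ an affine bijection of $[0,1]$ onto $[s_{\min},s_{\max}]$ (here non-degeneracy of $\mu,\nu$ ensures $s_{\min} < s_{\max}$, so the parametrization is genuinely injective), it suffices to choose $f,g$ quadratic so that $P(s) = (\alpha s+\beta)(\gamma s+\delta)$ is a quadratic in $s$ with a strict global minimum at $s = s(\lambda_0)$. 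Taking $f = g$ with $\alpha=\gamma\ne 0$, $P(s) = (\alpha s+\beta)^2$ is a perfect square, uniquely minimized where $\alpha s + \beta = 0$; choosing $\beta = -\alpha s(\lambda_0)$ places the minimum exactly at $s(\lambda_0)$. Concretely one can take $f(x,y) = g(x,y) = xy - s(\lambda_0)$ (a quadratic function of $(x,y)$), noting $s(\lambda_0)$ is a computable constant in terms of $\mu,\nu,\lambda_0$; then $\int f\,\d\pi = s(\pi) - s(\lambda_0)$ and the cost is $(s(\pi)-s(\lambda_0))^2 \ge 0$ with equality iff $s(\pi) = s(\lambda_0)$, i.e. iff $\lambda = \lambda_0$ within the family.

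\textbf{Main obstacle.} The genuinely delicate point is the claim that $\int f\,\d\pi$ is an \emph{affine} function of the single scalar $s = \int xy\,\d\pi$ — i.e., that among the quadratic monomials $x^2,y^2,xy,x,y,1$, only $xy$ has an integral that varies with $\pi\in\Pi(\mu,\nu)$. This requires care when $f$ is a general quadratic: one must verify that $\int x^2\,\d\pi = \int x^2\,\d\mu$ and $\int y^2\,\d\pi = \int y^2\,\d\nu$ are marginal-determined (immediate), handle the case where the relevant moments are infinite (the statement tacitly assumes enough integrability — compact support suffices), and confirm that the "algorithm" referenced in the proposition amounts precisely to this scalar reduction. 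A secondary subtlety is checking that the attainable-$s$ set is exactly $[s_{\min},s_{\max}]$ with both endpoints attained by $\pi_{\rm com},\pi_{\rm ant}$ and all intermediate values by the $X$-transports; this is standard (Hoeffding--Fréchet plus affine interpolation) but should be stated cleanly, and the strictness $s_{\min}<s_{\max}$ for non-degenerate marginals should be argued (e.g. $\int xy\,\d\pi_{\rm com} - \int xy\,\d\pi_{\rm ant} = 2\int_0^1 (Q_\mu(u)-\bar\mu)(Q_\nu(u)-\bar\nu)\,\d u$ type computation, positive unless one marginal is degenerate).
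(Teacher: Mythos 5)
Your proof is correct and follows essentially the same route as the paper's: factor the quadratic-form cost as $\big(\int f\,\d\pi\big)\big(\int g\,\d\pi\big)$, observe that all quadratic moments except $\E_\pi[XY]$ are determined by the marginals, reduce to a quadratic in $s=\E_\pi[XY]$ over the Hoeffding--Fr\'echet interval $[s_{\min},s_{\max}]$ which is swept out affinely by $(\pi_{\rm x}^\lambda)_{\lambda\in[0,1]}$, and then, for the ``moreover'' part, tune the coefficients so the quadratic's strict minimum lands at any prescribed $s(\lambda_0)$. The only cosmetic difference is that you give the explicit choice $f=g=xy-s(\lambda_0)$ (yielding cost $(s-s(\lambda_0))^2$), whereas the paper leaves the choice of $C_1,C_2,\alpha_1,\alpha_2$ implicit; your version is a fine concrete instantiation, and your flagging of the implicit integrability assumption matches the caveat in the paper's Table~\ref{tab:example}.
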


The proof of Proposition \ref{ex:quadratic cost} contains an algorithm that explicitly solves such a QOT problem. We illustrate it with the following example.

\begin{example}    
    Consider $\mu,\nu$ both distributed as $\mathrm{N}(0,1)$ with the cost function given by $c(x,y,x',y')=-(x+y)^2(2x'-y')^2$. A standard computation yields that for $\pi\in\Pi(\mu,\nu)$,
    \begin{align*}
        \E_{\pi\otimes\pi}[c(X,Y,X',Y')]&=-\E[(X+Y)^2]\E[(2X'-Y')^2]\\
        &=-(2+2\E[XY])(5-4\E[X'Y'])=8\E[XY]^2-2\E[XY]-10.
    \end{align*}
    Since the quadratic function $z\mapsto 8z^2-2z-10$ is minimized at $z=1/8$, we see that if $\cov(X,Y)=1/8$, the law $\pi$ of $(X,Y)$ is a QOT minimizer. This is achieved, for example, by $\pi=(9/16)\pi_{\mathrm{com}}+(7/16)\pi_{\mathrm{ant}}$, where $(X,X)\lawis\pi_{\mathrm{com}}$ and $(X,-X)\lawis\pi_{\mathrm{ant}}$. On the other hand, since the range of $\E[XY]$ is $[-1,1]$, the unique QOT maximizer is given by the antimonotone coupling $X=-Y$, where $\E[XY]=-1$.
\end{example}

{Proposition \ref{ex:quadratic cost} follows from the following more general result, which replaces the quadratic functions $f,g$ in Proposition \ref{ex:quadratic cost} by submodular functions $f,g$ that are identical up to a constant term.
\begin{proposition}\label{ex:quadratic cost-f}
   \sloppy  Suppose that $\mu,\nu\in\M(\R)$ and $f$ is submodular. If the cost function $c$ is given by \begin{equation}
   \label{eq:QC-f} c(x,y,x',y')=(\alpha_1 f(x,y)+C_1) (\alpha_2 f(x',y')+C_2), \mbox{  ~~ where $C_1,C_2,\alpha_1,\alpha_2\in \R$},
   \end{equation}
   then there exists a QOT minimizer  $\pi_{\rm x}^\lambda$ for some $\lambda\in [0,1]$.    Moreover, if $\mu,\nu$ are not degenerate, then   
      every    $\pi_{\rm x}^\lambda$ 
   minimizes the quadratic-form transport cost  for some  cost function in \eqref{eq:QC-f}  uniquely among the class $(\pi_{\rm x}^\lambda)_{\lambda \in [0,1]}$.  
\end{proposition}
}
% The same conclusion of Proposition \ref{ex:quadratic cost-f} holds true if we consider the cost function
% $ c(x,y,x',y')=(f(x,y)+a_1) (-f(x',y')+a_2)$, or supermodular $f$    instead of submodular $f$. 
% We omit these simple variants. 

\subsection{Jointly submodular cost functions}

Similarly to the classic OT problems, 
submodular and supermodular cost functions lead to explicit optimizers of QOT, which are the comonotone and antimonotone couplings ($\pi_{\mathrm{com}}$ and $\pi_{\mathrm{ant}}$),
as we present in the next result. 

\begin{theorem}\label{prop:comonotone}
   \sloppy  Suppose that $\mu,\nu\in\M(\R)$ and the cost function $c\in \mathcal C(\mu,\nu)$ satisfies that both $(x',y')\mapsto c(x,y,x',y')$ and $(x,y)\mapsto c(x,y,x',y')$ are submodular (resp.~supermodular) for every $x,y,x',y'\in\R$. 
     Then the comonotone (resp.~antimonotone) coupling is a minimizer.
\end{theorem}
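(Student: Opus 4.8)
The plan is to reduce the QOT problem with a jointly submodular cost to a sequence of classic OT problems and then exploit the well-known fact that submodular costs are minimized by the comonotone coupling. Concretely, I would use the two-step lower bound from Proposition \ref{prop:lb from conditioning}: writing $\hat c(x,y)=\C_{c_{x,y}}(\mu,\nu)$ with $c_{x,y}(x',y')=c(x,y,x',y')$, we have
$$\inf_{\pi\in\Pi(\mu,\nu)}\iint c\,\d\pi\otimes\d\pi\ \ge\ \C_{\hat c}(\mu,\nu).$$
The strategy is to verify that $\pi_{\rm com}$ is simultaneously optimal for the inner problems and the outer problem, so that the final claim of Proposition \ref{prop:lb from conditioning} applies and $\pi_{\rm com}$ is a genuine minimizer.

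The first key step: for each fixed $(x,y)$, the map $(x',y')\mapsto c(x,y,x',y')$ is submodular by hypothesis, so by the classic result (Theorem 2.9 of \cite{Santambrogio:2015}) the comonotone coupling $\pi_{\rm com}\in\Pi(\mu,\nu)$ attains $\C_{c_{x,y}}(\mu,\nu)$; that is,
$$\hat c(x,y)=\int c(x,y,x',y')\,\d\pi_{\rm com}(x',y')=\int_0^1 c(x,y,Q_\mu(t),Q_\nu(t))\,\d t.$$
The second key step is to show $\hat c$ is itself submodular in $(x,y)$, so that, again by the classic result, $\pi_{\rm com}$ attains $\C_{\hat c}(\mu,\nu)$. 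Submodularity of $\hat c$ should follow because $\hat c$ is an average (integral over $t\in[0,1]$) of the functions $(x,y)\mapsto c(x,y,Q_\mu(t),Q_\nu(t))$, each of which is submodular by the second hypothesis (with $x'=Q_\mu(t)$, $y'=Q_\nu(t)$ fixed), and submodularity is preserved under pointwise nonnegative combinations and limits. One caveat: submodularity \eqref{eq:sub} is stated for strict $x<x'$, $y<y'$, so I should note that the pointwise-integral inequality $\hat c(x,y)+\hat c(x',y')\le \hat c(x,y')+\hat c(x',y)$ passes through the integral sign directly without any monotone-convergence subtlety, since the integrand inequality holds for every $t$. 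I also need $\hat c\in\mathcal C(\mu,\nu)$ (finiteness/integrability) so that $\C_{\hat c}(\mu,\nu)$ is well-defined; this follows from $c\in\mathcal C(\mu,\nu)$ since the lower bound $f(x,x')+g(y,y')$ integrates against $\pi_{\rm com}$ to a function in $L^1(\mu\otimes\nu)$ after the inner optimization (one checks $\hat c(x,y)\ge \int f(x,Q_\mu(t))\d t+\int g(y,Q_\nu(t))\d t$, an integrable lower bound).

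Having established both optimality statements, Proposition \ref{prop:lb from conditioning} with $\pi_*=\pi_{\rm com}$ gives that $\pi_{\rm com}$ minimizes the QOT cost, completing the submodular case; the supermodular case is identical after replacing $\pi_{\rm com}$ by $\pi_{\rm ant}$ throughout (equivalently, applying the submodular case to $-c$ composed with the reflection $y\mapsto -y$, or just running the same argument with $Q_\nu(1-t)$ in place of $Q_\nu(t)$). The main obstacle I anticipate is the technical bookkeeping around integrability and measurability: confirming that $\hat c$ is measurable (so the outer OT problem makes sense) and that $c\in\mathcal C(\mu,\nu)$ genuinely transfers to $\hat c\in\mathcal C(\mu,\nu)$, and more delicately, that when $c$ is merely lower semi-continuous rather than bounded the integral $\int c(x,y,Q_\mu(t),Q_\nu(t))\d t$ is well-defined with value in $(-\infty,\infty]$ uniformly enough to justify the exchange of $\inf$ and $\int$. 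These are routine but need care; the submodularity propagation itself is the conceptual heart and is comparatively clean.
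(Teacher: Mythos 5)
Your proposal is correct and takes essentially the same approach as the paper: both verify via Proposition \ref{prop:lb from conditioning} that $\pi_{\rm com}$ is simultaneously optimal for the inner problems $c_{x,y}$ (by pointwise submodularity) and for the outer problem $\hat c$ (because $\hat c(x,y)=\int_0^1 c(x,y,Q_\mu(t),Q_\nu(t))\,\d t$ is an integral of submodular functions and hence submodular). The paper writes this out as a short chain of equalities and is somewhat less fussy about integrability bookkeeping, but the argument is the same.
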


\begin{example}
\label{ex:th-sub}
    We give several examples in which the conditions in Theorem \ref{prop:comonotone} are satisfied, including both type-XX and type-XY ones. 
    \begin{enumerate}[(i)]
    \item If $c(x,y,x',y')=c_1(x,y)c_2(x',y')$ where both $c_1,c_2$ are nonnegative and submodular, the submodularity condition in Theorem \ref{prop:comonotone} is clearly satisfied. In fact, a direct proof that the comonotone coupling is an optimizer follows from $\iint c\, \d \pi \otimes \d \pi = \int  c_1 \, \d \pi  \int  c_2 \,  \d \pi  $. 
\item Generalizing (i), suppose that $c(x,y,x',y')=h(c_1(x,y),c_2(x',y'))$ and $c\in\C(\mu,\nu)$, where both $c_1,c_2$ are submodular and componentwise increasing,  
and $h:\R^2\to \R$ is componentwise increasing and concave. We can check that the submodularity condition in Theorem \ref{prop:comonotone} is satisfied. Hence, the comonotone coupling is a minimizer. Similarly, the antimonotone coupling is a maximizer if $-c\in \C(\mu,\nu)$.  This includes, for instance, $c(x,y,x',y')= \min\{c_1(x,y),c_2(x',y')\}$, and $c(x,y,x',y')= (c_1(x,y)+c_2(x',y'))^p$ where $p\in(0,1)$ and $c_1,c_2$ nonnegative.     

   \item Suppose that $c(x,y,x',y')=h (|x-y|,|x'-y'|)$ and $c\in\C(\mu,\nu)$, where $h$ is componentwise increasing and convex.
It is elementary to check that   the function
   $(x,y)\mapsto h(|x-y|,a)$ for $a\in \R$ is submodular. By Theorem \ref{prop:comonotone}, the comonotone coupling is a minimizer.
   This includes,  for instance, $c(x,y,x',y')= \max\{|x-y|,|x'-y'|\}.$
        Note that for   $(X,Y)\lawis \pi$,
$$\iint  \max\{|x-y|,|x'-y'|\}\, \d \pi(x,y)\, \d \pi(x',y') =
\int_0^\infty \left(1-\left( \p(|X-Y|\le  x)\right)^2\right) \d x,
$$
which is the transport cost in a distorted OT  problem (\cite{liu2023distorted})\footnote{For a nonnegative cost function $\tilde c:\mathfrak X\times \mathfrak Y \to \R$ and a \emph{distortion function} $\eta:[0,1]\to [0,1]$  increasing with $\eta(0)=0$ and $\eta(1)=1$, the distorted OT problem has transport cost  formulated by $\int_0^\infty \eta (\p(\tilde c(X,Y)>x))\,\d x $, with the classic OT corresponding to $\eta(t)=t$ on $[0,1]$; see \cite{liu2023distorted}.} with   distortion function $\eta: t\mapsto 1-(1-t)^2$ 
and    cost function $\tilde c:(x,y)\mapsto |x-y|$.

        \item Consider $c(x,y,x',y')=c_1(x,x')c_2(y,y'),$ where $c_1\geq 0$ is increasing in both arguments and $c_2\geq 0$ is decreasing in both arguments. Observe that a function $f(x,y)=a(x)b(y)$ is submodular if $a$ is increasing positive and $b$ is decreasing positive. Therefore, the function $(x',y')\mapsto c(x,y,x',y')$ is submodular by our assumption, and the same holds for $(x,y)\mapsto c(x,y,x',y')$.
 
        \item The function $|x+y-x'-y'|$ is related to the Gini coefficient (see Example \ref{ex:gini}) and satisfies the supermodularity condition in Theorem \ref{prop:comonotone}, since $(x,y)\mapsto |x+y+c|$ is supermodular for every $c\in\R$. More generally, the cost function $|x+y-x'-y'|^p$ (resp.~$|x-y+x'-y'|^p$) for $p\geq 1$ induces the antimonotone (resp.~comonotone) coupling as an optimizer. Similarly, the antimonotone coupling is an optimizer of QOT with cost function $|x+y+x'+y'|^p$ for $p\geq 1$. 
        % It is also interesting to note that 
        % $$\bigg(\inf_{\pi\in\Pi(\mu,\nu)}\iint |x-y+x'-y'|^p\,\d \pi(x,y)\,\d \pi (x',y')\bigg)^{1/p},\quad p\geq 1$$
        % provides a metric on the set of probability measures on $\R$ with a finite $p$-th moment.
        
        \item The cost function $c(x,y,x',y')=\sgn(x-x')\,\sgn(y-y')$ defines   Kendall's  tau; see Example \ref{ex:kendall tau}. It is elementary to verify that $(x',y')\mapsto c(x,y,x',y')$ and $(x,y)\mapsto c(x,y,x',y')$ are supermodular, and hence the transport cost is maximized by the comonotone coupling and minimized by the antimonotone coupling. 
        \item Let $c(x,y,x',y')=\min\{x-x',y-y'\}$. Using that $(x,y)\mapsto \min\{x,y\}$ is supermodular, we see that the cost function $c$ satisfies the supermodularity condition in Theorem \ref{prop:comonotone}, and hence a minimizer is given by the comonotone coupling.

    \end{enumerate}
   
\end{example}

\subsection{Gromov--Wasserstein-type cost functions}

We now consider a family of type-XX cost functions, called the GW-type cost functions (see Example \ref{ex:GW} for  the GW distance).  
In what follows, we say that $\nu$ is an increasing (resp.~decreasing) location-scale transform of $\mu$ 
if $\nu=\mu\circ \ell^{-1}$ 
for some strictly increasing (resp.~decreasing) linear map $\ell:\R\to \R$; that is, $\ell(x)=ax+b$ for some $a>0$  (resp.~$a<0$) and $b\in \R$.

\begin{theorem}\label{prop:cocounter}
  Suppose that $\mu \in\M(\R)$, $\nu$ is an increasing  (resp.~decreasing) location-scale transform of $\mu$, and $h:\R_+^2\to\R$ is a submodular function. 
  Then the comonotone (resp.~antimonotone) coupling   is a minimizer of the QOT with the cost function $c$ given by $$c(x,y,x',y')=h(|x-x'|,|y-y'|).$$ 
  Moreover, such a minimizer is unique if it yields a finite transport cost, $h$ is strictly submodular, and $\mu$ is asymmetric. In the same setting except that $\mu$ is symmetric, the comonotone and antimonotone couplings
   are the only minimizers.
  %Moreover, if $h$ is strictly submodular and the comonotone coupling yields a finite transport cost,
%   then the comonotone coupling 
%   is the unique minimizer 
% when $\mu$ is asymmetric,
%   and the comonotone and antimonotone couplings
%   are the only minimizers when $\mu$ is symmetric.
\end{theorem}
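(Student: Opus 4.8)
The plan is to derive optimality of $\pi_{\rm com}$ (resp.~$\pi_{\rm ant}$) from the aggregated-marginal lower bound of Proposition~\ref{prop:lb from multimarginal}, and then to promote it to uniqueness by a rigidity argument for couplings attaining that bound. \emph{Existence.} Write $\nu=\mu\circ\ell^{-1}$ with $\ell(x)=ax+b$, first taking $a>0$. Proposition~\ref{prop:lb from multimarginal} with $f(x,x')=|x-x'|$ and $g(y,y')=|y-y'|$ gives $\iint c\,\d\pi\otimes\d\pi\ge\C_h(\mu_f,\nu_g)$ for every $\pi\in\Pi(\mu,\nu)$, where $\mu_f$ is the law of $|X-X'|$ with $X,X'$ independent of law $\mu$, and $\nu_g$ that of $|Y-Y'|$ with $Y,Y'$ independent of law $\nu$. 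Since $Y\laweq aX+b$, the measure $\nu_g$ is the pushforward of $\mu_f$ under $\xi\mapsto a\xi$, i.e.\ an increasing location-scale transform of $\mu_f$ on $\R_+$. As $h$ is submodular, $\C_h(\mu_f,\nu_g)$ is attained by the comonotone coupling of $\mu_f$ and $\nu_g$ (Theorem~2.9 of \cite{Santambrogio:2015}), which here is the law of $(\xi,a\xi)$ with $\xi$ of law $\mu_f$, so $\C_h(\mu_f,\nu_g)=\E[h(\xi,a\xi)]$. Under $\pi_{\rm com}$ one has $Q_\nu=\ell\circ Q_\mu$, hence $Y=\ell(X)$ a.s.\ and $|Y-Y'|=a|X-X'|$, so $\iint c\,\d\pi_{\rm com}\otimes\d\pi_{\rm com}=\E[h(|X-X'|,a|X-X'|)]$ equals the lower bound and $\pi_{\rm com}$ is a minimizer. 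The case $a<0$ is identical with $\pi_{\rm ant}$, using $Q_\nu(\,\cdot\,)=\ell(Q_\mu(1-\,\cdot\,))$, which again gives $Y=\ell(X)$ a.s.\ along $\pi_{\rm ant}$.

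\emph{Uniqueness.} Suppose $h$ is strictly submodular and $\pi$ is a minimizer with finite cost, so equality holds throughout the chain above. By the proof of Proposition~\ref{prop:lb from multimarginal}, $\iint c\,\d\pi\otimes\d\pi=\int h\,\d\hat\pi$ with $\hat\pi\in\Pi(\mu_f,\nu_g)$ the law of $(|X-X'|,|Y-Y'|)$ under $\pi\otimes\pi$; hence $\hat\pi$ is optimal for $\C_h(\mu_f,\nu_g)$. Strict submodularity forces the optimal classical plan to be the unique comonotone one --- a plan charging a strictly antitone pair admits a strictly improving local mass swap, and the only plan carried by a nondecreasing set is the comonotone one --- and since $\nu_g$ is a scaling of $\mu_f$ this plan sits on the half-line $\{(\xi,a\xi):\xi\ge0\}$. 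Thus $|Y-Y'|=a|X-X'|$ holds $(\pi\otimes\pi)$-a.s. Passing to $\supp\pi$ by a standard separability argument, $\pi$ is concentrated on the graph of a map $\phi$ on $\supp\mu$ with $|\phi(x)-\phi(x')|=a|x-x'|$ for all $x,x'$; a tight-triangle-inequality argument makes $\phi$ strictly monotone, hence affine of slope $\pm a$. The constraint $\mu\circ\phi^{-1}=\nu=\mu\circ\ell^{-1}$ then forces $\phi=\ell$ (slope $+a$; otherwise $\mu$ would be invariant under a nonzero translation), giving $\pi=\pi_{\rm com}$, or --- only when $\mu$ is symmetric --- the reflected affine map (slope $-a$), giving $\pi=\pi_{\rm ant}$. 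Hence $\pi_{\rm com}$ is the unique minimizer when $\mu$ is asymmetric; when $\mu$ is symmetric, a direct check (via $Q_\mu(1-U)=2m-Q_\mu(U)$) confirms that $\pi_{\rm ant}$ also attains the bound, so $\pi_{\rm com}$ and $\pi_{\rm ant}$ are the only minimizers. The decreasing-$\ell$ case is entirely symmetric.

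\textbf{Main obstacle.} I expect the delicate step to be the rigidity argument: rigorously upgrading the $(\pi\otimes\pi)$-a.s.\ identity $|Y-Y'|=a|X-X'|$ to a statement about $\supp\pi$, and then running the affine classification cleanly when $\mu$ has atoms or $\supp\mu$ has very few points, where the per-triple monotonicity reasoning degenerates; one should also double-check that strict submodularity really yields a \emph{unique} optimal classical plan for $(h;\mu_f,\nu_g)$ in such degenerate cases. The existence half is essentially bookkeeping around Proposition~\ref{prop:lb from multimarginal} and the classical submodular-cost transport result.
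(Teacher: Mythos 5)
Your existence argument is essentially the paper's: both derive the lower bound from Proposition~\ref{prop:lb from multimarginal} and verify that $\pi_{\rm com}$ (resp.~$\pi_{\rm ant}$) achieves it. The paper first normalizes to $\mu=\nu$ by absorbing the location-scale map into $h$ (which preserves submodularity), then works with the aggregate marginal $\kappa$ = law of $|X-X'|$; you carry $\ell$ through, which is equivalent.

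The uniqueness argument is where you genuinely diverge. Both proofs first use strict submodularity to conclude that the aggregated OT plan is the comonotone one, forcing $|Y-Y'|=a|X-X'|$ a.s.\ under $\pi\otimes\pi$. From there, the paper applies the algebraic factorization $(X-X'+Y-Y')(X-X'-Y+Y')=0$ to conclude that a.s.\ either $X+Y=X'+Y'$ or $X-Y=X'-Y'$, and then uses independence of $(X,Y)$ and $(X',Y')$ to deduce that either $X+Y$ is constant (forcing $\pi_{\rm ant}$, possible only for symmetric $\mu$) or $X-Y$ is constant (forcing $\pi_{\rm com}$). This sidesteps all support considerations. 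Your route instead upgrades the a.s.\ identity to an identity on $(\supp\pi)^2$ (valid: the violation set is open, hence must miss the support squared), deduces the graph/isometry structure, and classifies $\phi$ as affine of slope $\pm a$. Your route works and the ``main obstacle'' concerns you raise are addressable (the isometry-on-a-subset $\Rightarrow$ affine argument holds for any $\supp\mu$ with at least two points, and the pushforward constraint rules out the stray intercept since no non-trivial translation fixes a probability measure), but it is more laborious; the paper's factorization trick is cleaner precisely because it avoids the support analysis and works directly in distribution, and it handles atomic marginals with no extra care. Both approaches hinge on the same uniqueness fact for classic OT under strict submodularity, which you gesture at informally; the paper invokes it equally briefly, and it is standard.
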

In particular,  an optimal coupling in Theorem \ref{prop:cocounter} is precisely given by the Monge map $x\mapsto \ell(x)$, that is, the linear transform connecting $\mu$ and $\nu$.

\begin{example}
\label{ex:47}
We give a few examples of QOT problems that satisfy the conditions in Theorem \ref{prop:cocounter}. Assume $\mu=\nu\in\M(\R)$ in all items.
\begin{enumerate}[(i)]
    \item Let $h(u,v)=\max\{u,v\}$, which is submodular. Theorem \ref{prop:cocounter} then implies that $\pi_{\mathrm{com}}$ is a minimizer for the cost function $c(x,y,x',y')=\max\{|x-x'|,|y-y'|\}$.  
    \item Let $h(u,v)=-u^{-\alpha}v^{-\alpha}$ for $\alpha\in(0,1/2)$. If $\mu$ has a uniformly bounded density, $$0\leq -\E_{\pi_{\mathrm{com}}\otimes\pi_{\mathrm{com}}}[h(|X-X'|,|Y-Y'|)]=\E[|X-X'|^{-2\alpha}]<\infty.$$
    In this case, Theorem \ref{prop:cocounter} implies that $\pi_{\mathrm{com}}$ (and $\pi_{\mathrm{ant}}$ if $\mu$ is symmetric) is the unique minimizer for the cost function $c(x,y,x',y')=-|x-x'|^{-\alpha}|y-y'|^{-\alpha}$.

    \item Let $h(u,v)=-u^\beta v^\beta$ where $\beta>0$. It is easy to verify that $h$ is submodular. If $\mu\in\P_{2\beta}(\R)$, 
    $$0\leq -\E_{\pi_{\mathrm{com}}\otimes\pi_{\mathrm{com}}}[h(|X-X'|,|Y-Y'|)]=\E[|X-X'|^{2\beta}]<\infty.$$
    It follows from Theorem \ref{prop:cocounter} that $\pi_{\mathrm{com}}$ (and $\pi_{\mathrm{ant}}$ if $\mu$ is symmetric) is the unique minimizer for the cost function $c(x,y,x',y')=-|x-x'|^{\beta}|y-y'|^{\beta}$. The same cost function is also investigated in \cite{beinert2023assignment} in the case $\mu\neq \nu$, where it is shown that the Monge minimizer may be far away from the comonotone coupling.

    \item The choice $h(u,v)=|u^q-v^q|^p$ corresponds to the $(p,q)$-GW transport cost, see \eqref{eq:GWpq}. One can verify that $h$ is submodular if $p\geq 1,q>0$ on $[0,\infty)^2$. Theorem \ref{prop:cocounter} then implies that in the case $\mu=\nu$, $\pi_{\mathrm{com}}$ (and $\pi_{\mathrm{ant}}$ if $\mu$ is symmetric) is a minimizer for \eqref{eq:GWpq}. 
    This aligns with the intuition that the GW distance measures distances between metric measure spaces.    
\end{enumerate}
    
\end{example}

% \begin{remark}\label{rem:shift}
%     By the translation invariance of the cost function, the same conclusion of Theorem \ref{prop:cocounter} holds if instead of $\mu=\nu\in\M(\R)$, we assume that $\mu\in\M(\R)$ and $\nu(A)=\mu(b+A)$ for all Borel sets $A$ for some $b\in\R$. The comonotone transport is now given by the Monge map $x\mapsto x-b$. 
% \end{remark}

\begin{remark}
    Theorem \ref{prop:cocounter} extends naturally to more general Polish spaces, where $\X=\Y$, $\nu$ is a lateral shift of $\mu$, and $c(x,y,x',y')=h(d(x,x'),d(y,y'))$. In this case, the lateral shift is always a Monge minimizer, but the uniqueness of the minimizer may depend on the geometry of the Polish space and the measures $\mu,\nu$.
\end{remark}

\subsection{A special class of separable cost functions and the V-transport}

In this section, we prove that the V-transport mentioned in the Introduction serves as a minimizer for a special class of separable type-XX cost functions. We first rigorously define the V-transport. Recall that $Q_\mu $ is the left quantile function of $\mu\in\M(\R)$.

\begin{definition}\label{def:wedge}
A coupling $(X,Y)$ with marginals $\mu$ and $\nu$, or its joint distribution, is the \textit{V-transport} if 
$(X,Y)\laweq (Q_\mu(U),Q_{\nu}(|2U-1|))$,
where $U\lawis\mathrm{U}$. In this case, we denote the law of $(X,Y)$ by $\pi_{\rm v}$.
\end{definition}

For instance, if $\mu,\nu\lawis\mathrm{U}$, then $\pi_{\rm v}$ is the distribution of $(U,|2U-1|)$ where $U\lawis\mathrm{U}$; see Figure \ref{fig:transport}(b). 
If $\nu$ is atomless with median $m_{\nu}$, then $\pi_{\rm v}$ is the arithmetic average of the antimonotone coupling of $\mu$ and $2\nu|_{(-\infty,m_\nu]}$, and the comonotone coupling of $\mu$ and $2\nu|_{(m_\nu,\infty)}$.

\begin{theorem}\label{prop:wedge}
     Suppose   $\mu=\mathrm{U}(a,b)$ for some $a<b$, $ \nu\in\M(\R)$,  and the cost function $c$  has the form
    \begin{align}
        c(x,y,x',y')=f(|x-x'|)g(y,y'),\label{eq:special}
    \end{align}
    \sloppy where $g$ is right-continuous, 
    %bounded on $\supp\mu\otimes\mu$,
    increasing in both arguments, supermodular, and satisfies
    $$\lim_{y\to -\infty}g(y,y')=\lim_{y'\to -\infty}g(y,y')=0,$$
     and $h$ is nonnegative, right-continuous, and increasing. Then the V-transport is a minimizer.
\end{theorem}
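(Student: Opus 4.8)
The plan is to follow the paper's general strategy---lower-bound the QOT cost, then exhibit a coupling attaining the bound---implemented through a double layer-cake that peels $c$ into indicator costs and reduces everything to a one-dimensional rearrangement inequality. As a preliminary normalization, an increasing affine change of variable on $\X=\R$ lets us assume $\mu=\mathrm U(0,1)$, the factor $b-a$ being absorbed into $f$, which remains nonnegative, increasing and right-continuous; and since $c$ and $c+f(0)g(y,y')$ differ by a QOT-irrelevant term (Fact~\ref{rem:marginals}; its transport cost is $f(0)\,\E[g(Y,Y')]$ with $Y,Y'$ i.i.d.\ $\nu$, hence constant), we may also take $f(0)=0$.

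\emph{Layer-cake.} Write $f(t)=\int_{(0,\infty)}\id\{t\ge r\}\,\d\rho(r)$ for the Lebesgue--Stieltjes measure $\rho$ of $f$. The hypotheses on $g$---increasing in each argument, supermodular (so all its rectangle increments are nonnegative), right-continuous, and vanishing in each variable as the argument tends to $-\infty$---say precisely that $g$ is the bivariate distribution function of a positive Borel measure $\lambda$ on $\R^2$, i.e.\ $g(y,y')=\int_{\R^2}\id\{y\ge s\}\id\{y'\ge t\}\,\d\lambda(s,t)$. All integrands being nonnegative, Fubini--Tonelli gives
\[
\iint c\,\d\pi\otimes\d\pi=\int_{(0,\infty)}\int_{\R^2}\Psi(r,s,t;\pi)\,\d\lambda(s,t)\,\d\rho(r),\qquad \Psi(r,s,t;\pi):=\E_{\pi\otimes\pi}\big[\id\{|X-X'|\ge r\}\,\id\{Y\ge s\}\,\id\{Y'\ge t\}\big].
\]
Since $\pi_{\rm v}$ does not depend on $(r,s,t)$, it suffices to show that $\pi_{\rm v}$ minimizes $\Psi(r,s,t;\cdot)$ over $\Pi(\mathrm U(0,1),\nu)$ for each fixed $r>0$ and $s,t\in\R$.

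\emph{The per-layer problem.} Disintegrate $\pi$ along its $\mathrm U(0,1)$-marginal, $\pi(\d x,\d y)=\d x\,\kappa_x(\d y)$, and set $\phi(x)=\kappa_x([s,\infty))$, $\psi(x)=\kappa_x([t,\infty))$; these are measurable maps $[0,1]\to[0,1]$ with $\int_0^1\phi=\nu([s,\infty))=:q$ and $\int_0^1\psi=\nu([t,\infty))=:q'$, and $\Psi(r,s,t;\pi)=\iint_{[0,1]^2}\id\{|x-x'|\ge r\}\phi(x)\psi(x')\,\d x\,\d x'$. Relaxing to arbitrary such $(\phi,\psi)$, the inequality to establish is
\[
\iint_{[0,1]^2}\id\{|x-x'|\ge r\}\,\phi(x)\,\psi(x')\,\d x\,\d x'\ \ge\ \iint_{I_q\times I_{q'}}\id\{|x-x'|\ge r\}\,\d x\,\d x',\qquad I_\ell:=\Big[\tfrac{1-\ell}{2},\,\tfrac{1+\ell}{2}\Big],
\]
whose right-hand side is attained by $\phi=\id_{I_q}$, $\psi=\id_{I_{q'}}$. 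Writing $\id\{|x-x'|\ge r\}=1-\id\{|x-x'|<r\}$ turns this into \emph{maximizing} $\iint\id\{|x-x'|<r\}\phi(x)\psi(x')$; Riesz's rearrangement inequality reduces to the case where $\phi,\psi$ are symmetric decreasing about $\tfrac12$ (the kernel $\id\{|\cdot|<r\}$ already being so), and then two applications of the bathtub principle---the reward felt by $\phi$, namely $x\mapsto\int_0^1\id\{|x-x'|<r\}\psi(x')\,\d x'$, is symmetric decreasing once $\psi$ is, and symmetrically for $\psi$---force both factors to the indicators $\id_{I_q},\id_{I_{q'}}$. Finally, $\pi_{\rm v}$ (Definition~\ref{def:wedge}) is, after the normalization, the Monge coupling $Y=Q_\nu(1-|2X-1|)$, i.e.\ a tent function of $X$ peaking at the midpoint; so $\kappa_x=\delta_{Q_\nu(1-|2x-1|)}$ and $\{x:Q_\nu(1-|2x-1|)\ge s\}=\{x:|2x-1|\le\nu([s,\infty))\}=I_q$, whence $\phi=\id_{I_q}$ and likewise $\psi=\id_{I_{q'}}$ (atoms of $\nu$ at $s$ or $t$ only move endpoints, a null set). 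Thus $\pi_{\rm v}$ attains the bound for every $(r,s,t)$, and integrating back against $\lambda\otimes\rho$ shows it is a QOT minimizer.

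\emph{Main obstacle.} The real content is the per-layer rearrangement inequality, and in particular the point that among pairs of intervals of prescribed lengths it is the \emph{concentric} pair---rather than, say, the pair of prefixes $[0,q],[0,q']$, which is what $\pi_{\rm ant}$ would produce---that minimizes $\iint\id\{|x-x'|\ge r\}$; turning the Riesz and bathtub steps into a self-contained argument when $q\ne q'$, and disposing of the edge cases $r\ge1$ and $q\in\{0,1\}$, are the delicate points (and are exactly where the monotonicity/supermodularity hypotheses are used, via the two layer-cake representations). The remaining ingredients---the affine reduction, the measure representations of $f$ and $g$, and the identification of the super-level sets of $\pi_{\rm v}$---are routine bookkeeping.
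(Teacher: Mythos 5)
Your proposal follows the paper's proof essentially step for step through the layer-cake reduction: normalize $\mu=\mathrm{U}(0,1)$, write $f$ as a Stieltjes mixture of $\bone_{\{t\ge r\}}$ and $g$ as a mixture of $\bone_{[s,\infty)\times[t,\infty)}$, and reduce to the per-layer problem of minimizing $\iint\bone_{\{|x-x'|\ge r\}}\phi(x)\psi(x')\,\d x\,\d x'$ over $\phi,\psi\in[0,1]$ of prescribed masses. Where you diverge is the treatment of this per-layer problem: the paper recasts it as a constraint that the conditional laws $X\mid Y\ge s$, $X'\mid Y'\ge t$ have densities bounded by $1/q$, $1/q'$, proves the continuous extremal lemma (Lemma \ref{lemma:opt cts}) by discretizing and invoking \cite{burkard1998quadratic}'s discrete assignment lemma, and passes to the limit; you instead pass to the complementary maximization of $\iint\bone_{\{|x-x'|<r\}}\phi\psi$ and settle it with the Riesz rearrangement inequality followed by a double bathtub argument. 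The two routes are equivalent in substance (your $\phi$ is $q$ times the paper's density), but yours is analytic and self-contained rather than outsourced to a cited discrete result plus a weak-convergence passage; both buy the same conclusion that the extremal pair is the concentric intervals $I_q,I_{q'}$. As you note, the rigorous bookkeeping for the bathtub step when the reward has flat pieces and for the edge cases $r\ge 1$, $q\in\{0,1\}$ is left implicit, but these are routine.

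One point deserves a flag, not as a defect of your argument but as an inconsistency you have (perhaps unknowingly) reproduced from the paper itself. You identify the disintegration of $\pi_{\rm v}$ as $\kappa_x=\delta_{Q_\nu(1-|2x-1|)}$ and cite Definition \ref{def:wedge}, but the Definition as written gives $(Q_\mu(U),Q_\nu(|2U-1|))$, i.e.\ $\kappa_x=\delta_{Q_\nu(|2x-1|)}$, for which the set $\{x:Y(x)\ge s\}$ is $[0,q/2]\cup[1-q/2,1]$ --- the two end pieces, not the centered interval $I_q$. With $f(t)=\bone_{\{t\ge 1/2\}}$ and $g(y,y')=\bone_{\{y\ge m_\nu\}}\bone_{\{y'\ge m_\nu\}}$ the Definition's coupling yields $\Psi=1/8$ while the centered-interval coupling yields $0$, so the literal $\pi_{\rm v}$ is not a minimizer. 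The paper's own proof commits the same substitution implicitly, asserting without derivation that $X\mid Y\ge v\lawis\mathrm{U}((1-q)/2,(1+q)/2)$; that claim holds for the ``$\Lambda$-transport'' $(Q_\mu(U),Q_\nu(1-|2U-1|))$, not for Definition \ref{def:wedge} as stated. In short, both your proof and the paper's prove the theorem for $Q_\nu(1-|2U-1|)$; the stated Definition appears to carry a sign typo.
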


Theorem \ref{prop:wedge} requires that $\mu$ is uniformly distributed on a compact interval. For a general atomless $\mu$, we can transform the marginal by using $f(|F_{\mu}(x)-F_{\mu}(x')|)$
instead of $f(|x-x'|)$ in \eqref{eq:special}, and the same result applies.

The conditions on $g$ hold, for instance, if $g(y,y')=\phi(y)\psi(y')$ where both $\phi,\psi$ are increasing, right-continuous, and satisfy $\lim_{y\to-\infty}\phi(y)=\lim_{y\to-\infty}\psi(y)=0$.

The QAP version of Theorem \ref{prop:wedge} is contained in \cite{burkard1998quadratic}. However, the Monge assumptions of QAP cannot be relaxed in general, and hence we cannot directly apply stability (Proposition \ref{prop:stability}) to solve the corresponding QOT.

\section{The diamond transport}
\label{sec:diamond}

In this section, we systematically study a new transport, the diamond transport, which turns out to be a minimizer for several classes of type-XX QOT cost functions. 
Its definition is presented below. 

\begin{definition} \label{def:diamond}
Let
$
D=\{(x,y)\in [0,1]^2: 
|y-1/2| + |x-1/2|=1/2 \}.
$
 The \emph{diamond copula} $C_{\rm dia}$ is the cdf of the uniform distribution on $D$.  
 The \emph{diamond transport} $\pi_{\mathrm{dia}}\in \Pi(\mu,\nu)$ between $\mu,\nu$ is the law of  $(Q_{\mu} (U),Q_{\nu}(V))$ where $(U,V)\lawis C_{\rm dia}$. 
\end{definition}

 In terms of cdf,
 $\pi_{\rm dia}\in \Pi(\mu,\nu)$ can be expressed as \begin{align}
     \label{eq:dia-sklar}
 F_{\pi_{\rm dia}}(x,y)= C_{\rm  dia}(F_\mu (x),F_{\nu}(y)),~~~x,y\in\R. 
\end{align}
Denote by $a\wedge b$ the minimum of $a,b$
and by $a\vee b$ the maximum of $a,b$. Moreover, let $$a\diamond b=\frac{a}{2}+\frac{b}{2}-\frac{1}{4}.$$
By direct calculation, 
the diamond copula has an explicit cdf formula
\begin{align}
C_{\rm  dia}( u,v) = \begin{cases}
 (  u\diamond v)_+ & (u,v)\in [0,1/2]^2
    \\    (  u\diamond v)\wedge v  & (u,v)\in (1/2,1]\times [0,1/2] 
       \\   (  u\diamond v) \wedge u  & (u,v)\in [0,1/2] \times  (1/2,1]    \\   (  u\diamond v) \vee (u+v-1)  & (u,v)\in   (1/2,1]^2.
\end{cases}
\label{eq:dia-cop}
\end{align}
In particular, $C_{\rm dia}(u,v)=u\diamond v$ when $(u,v)$ is in the area inside $D$.
In the case $\mu=\nu=\mathrm{U}$, the diamond copula coincides with the diamond transport; see Figure \ref{fig:cdf} for an illustration.

\begin{figure}[t]
    \centering
  \begin{tikzpicture}
    \begin{axis}[
        axis lines=middle,
        width=8cm,
        height=8cm,
        enlargelimits,
        ymin=0, ymax=1,
        xmin=0, xmax=1,
        ytick=\empty,
        xtick=\empty,
        axis equal % Ensures equal scaling
    ]
        % Curves
        \addplot[name path=F,blue,domain={0:0.5}] {0.5-x} node[pos=0.8, above] {$D$};
        \addplot[name path=F,black,dashed,domain={0:1}] {1} node[pos=0.8, above] {};
        \addplot[name path=F,blue,domain={0:0.5}] {0.5+x};
        \addplot[name path=F,blue,domain={0.5:1}] {x-0.5};
        \addplot[name path=F,blue,domain={0.5:1}] {1.5-x};

        % Labels inside the plot
        \node at (axis cs:0.82,0.88) {$=u+v-1$};
        \node at (axis cs:0.5,0.5) {$C_{\rm dia}(u,v)=u\diamond v$};
        \node at (axis cs:0.85,0.12) {$=v$};
        \node at (axis cs:0.15,0.88) { $=u$};
        \node at (axis cs:0.15,0.12) { $=0$};

        % Axis labels
        \node at (axis cs:-0.03,-0.04) {0};
        \node at (axis cs:-0.03,1) {1};
        \node at (axis cs:1,-0.04) {1};
    \end{axis}

    % External labels for axes
    \node at (0.45,6.65) {$v$};
    \node at (6.65,0.45) {$u$};

    % Dashed lines
   
    \draw [dashed] (5.9,0.55) -- (5.9,5.9);
\end{tikzpicture}

    \caption{The value $C_{\rm dia}(u,v)$ of the diamond copula, illustrated by distinct values in different regions. The blue shape is the support $D$ of the diamond copula, which also indicates transitions of the cdf across different regions.}
    \label{fig:cdf}
\end{figure}
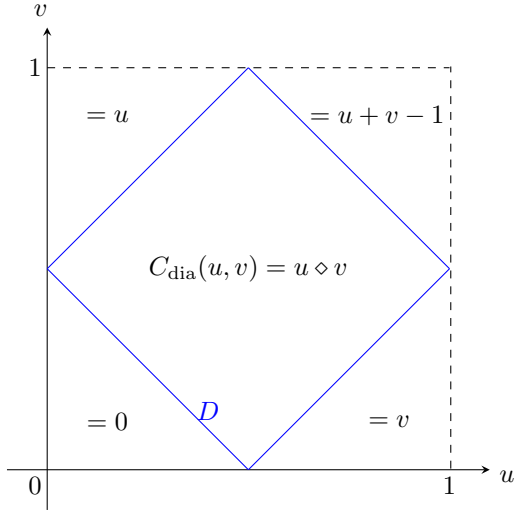

\subsection{The rectangular cost function}

We now consider the \emph{rectangular cost function}  $c(x,y,x',y')=|(x-x')(y-y')|$, which is the area of the rectangle formed by the two vertices $(x,y)$ and $(x',y')$.
By Fact \ref{rem:marginals},
 this cost function is equivalent to the ones in two other problems: 
\begin{enumerate}[(a)]
    \item minimizing the transport cost is equivalent to the problem of inequality minimization  in the Introduction with cost function $ (\theta_1|x-x'|+\theta_2|y-y'|)^2$  
in \eqref{eq:motivation1} with $\theta_1,\theta_2>0$;
    \item maximizing the transport cost is equivalent to computing the $(2,1)$-GW distance in Section \ref{sec:exs} defined  in  \eqref{eq:GWpq}.
\end{enumerate} 
In the next theorem, we see that the diamond transport uniquely solves the QOT problem. A family of more general cost functions will be studied in Section \ref{sec:p-cost}, where we prove analogous results.

% In Theorem \ref{thm:diamond}, we restrict ourselves to QOT problems that are convex. Nevertheless, the diamond transport also arises as the unique optimizer in the following special example that is non-convex.

\begin{theorem}\label{prop:pidia}
 Let $\mu,\nu\in\P_1(\R)$. For the rectangular cost function $c$   given by 
 $$c(x,y,x',y')=|(x-x')(y-y')|,$$
the unique minimizer of the QOT problem is the diamond transport $\pi_{\mathrm{dia}}$.
\end{theorem}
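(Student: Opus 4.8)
The plan is to first reduce to the case of uniform marginals by a quantile transformation, then exploit the structure of the rectangular cost to reduce a two-coupling problem to an optimization over copulas. Specifically, using Fact~\ref{rem:marginals} one may discard the QOT-irrelevant cross terms in the expansion $|(x-x')(y-y')|$... actually the absolute value prevents that, so instead I would work directly with $c(x,y,x',y')=|x-x'|\,|y-y'|$. For a coupling $\pi$ with marginals $\mu,\nu\in\P_1(\R)$, write $\pi$ via its copula $C$ (Sklar), i.e. $(X,Y)\laweq(Q_\mu(U),Q_\nu(V))$ with $(U,V)\lawis C$. Since $Q_\mu,Q_\nu$ are nondecreasing, for independent copies $(U,V),(U',V')\lawis C$ one has $|X-X'|=|Q_\mu(U)-Q_\mu(U')|$ and similarly for $Y$; the key observation is that $\iint c\,\d\pi\otimes\d\pi$ depends on $C$ only through the quantities $\E[(\id_{\{U\le s\}}-\id_{\{U'\le s\}})(\id_{\{V\le t\}}-\id_{\{V'\le t\}})]$ integrated against $\d Q_\mu(s)\,\d Q_\nu(t)$. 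Concretely, using $|a-b| = \int (\id_{\{a\le u < b\}}+\id_{\{b\le u<a\}})\,\d u$ one obtains a ``layer-cake'' representation
\begin{align*}
\iint |x-x'|\,|y-y'|\,\d\pi\otimes\d\pi
= 4\iint_{\R^2} \var_C\!\big(\id_{\{U\le s\}},\id_{\{V\le t\}}\big)\,\d Q_\mu(s)\,\d Q_\nu(t),
\end{align*}
where the inner integrand is a bilinear functional of $C$ that I would express explicitly through $C(s,t)$, namely $C(s,t)-C(s,t)^2$ type terms minus cross terms — the precise identity is a short computation giving something of the form $a(1-a) + b(1-b) - 2(C(s,t)-ab)^2 \cdot(\text{sign adjustments})$ with $a=F_\mu$-level, $b=F_\nu$-level; I will compute this carefully so that the $C$-dependent part is a concave quadratic in the pointwise values $C(s,t)$.

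Granting such a representation, minimizing $\iint c\,\d\pi\otimes\d\pi$ becomes: pointwise in $(s,t)$ we want $C(s,t)$ as far as possible from the independent value $F_\mu(s)F_\nu(t)$ (because the $C$-dependent term enters with a sign that rewards large deviation), subject to the Fréchet--Hoeffding constraints $\max\{u+v-1,0\}\le C(u,v)\le\min\{u,v\}$ and the consistency of $C$ being a genuine copula. The diamond copula $C_{\rm dia}$ given in \eqref{eq:dia-cop} is precisely the copula that, in the region inside $D$, sits on the value $u\diamond v = u/2+v/2-1/4$, and outside $D$ hugs the Fréchet bounds — I would verify that this is exactly the extremal configuration selected by the pointwise optimization, i.e. that $C_{\rm dia}$ is the copula maximizing $\iint (C(s,t)-F_\mu F_\nu)^2$-type weight while remaining a copula. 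Uniqueness then follows because the objective is \emph{strictly} concave in the $C(s,t)$ values wherever $\d Q_\mu\otimes\d Q_\nu$ has mass (here is where $\mu,\nu\in\P_1$ and nondegeneracy matter), so the minimizer of the copula is unique on the support, forcing $\pi=\pi_{\rm dia}$.

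An alternative, possibly cleaner route — and the one I would present if the copula computation gets unwieldy — is to use the lower bound machinery of Section~\ref{sec:general}. By Proposition~\ref{prop:lb from multimarginal} with $f(x,x')=|x-x'|$, $g(y,y')=|y-y'|$, $h(\xi,\zeta)=\xi\zeta$, we get $\iint c\,\d\pi\otimes\d\pi \ge \C_h(\mu_f,\nu_g) = \E[|X-X'|]\,\E[|Y-Y'|]$ when $h$ is a product — but that bound is attained only when $|X-X'|$ and $|Y-Y'|$ are antimonotone, which is generally infeasible, so this bound is not sharp and I would instead need the conditional lower bound of Proposition~\ref{prop:lb from conditioning}: set $\hat c(x,y)=\C_{c_{x,y}}(\mu,\nu)$ where $c_{x,y}(x',y')=|x-x'|\,|y-y'|$. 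Computing $\C_{c_{x,y}}(\mu,\nu)$ is itself a classic OT problem with a submodular-in-$(x',y')$ cost (for fixed $(x,y)$ the function $|x-x'|\,|y-y'|$ is... not globally submodular because of the kink at $x'=x$, $y'=y$, which is the crux), so this also requires care. I expect the main obstacle to be establishing the exact quadratic/concave representation of the transport cost as a functional of the copula and then rigorously justifying the pointwise optimization — in particular checking that the pointwise-optimal values actually assemble into a legitimate copula (the 2-increasing property), which is exactly the content of formula \eqref{eq:dia-cop} and must be verified rather than assumed. The uniqueness claim adds a further layer: I must rule out any other copula agreeing with $C_{\rm dia}$ on the support of $\d Q_\mu\otimes\d Q_\nu$ but differing elsewhere, which uses that $\supp\mu$ and $\supp\nu$ have no gaps relative to the quantile measures, or absorb this into the statement via the $\P_1$ hypothesis; I would handle it by noting the marginal constraints pin down $C$ on a dense set.
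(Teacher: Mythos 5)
Your overall strategy — a layer-cake representation of $|x-x'||y-y'|$, reducing the transport cost to an integral of a pointwise functional of the cdf $A(u,v)=\pi((-\infty,u]\times(-\infty,v])$, then optimizing pointwise subject to the Fréchet--Hoeffding bounds and identifying the result with $C_{\rm dia}$ — is in fact the paper's approach. But your sketch contains a genuine error in the decisive step, and you leave that step unfinished ("I will compute this carefully"). Writing the rectangle $[(x,y),(x',y')]$ as indicator over $(u,v)\in\R^2$, the paper computes
\begin{align*}
\pi\otimes\pi\big(\{(u,v)\in[(x,y),(x',y')]\}\big)=4A^2-(4F_\mu+4F_\nu-2)A+2F_\mu F_\nu,
\end{align*}
which is a \emph{strictly convex} quadratic in $A$, with unconstrained vertex at $A=F_\mu\diamond F_\nu=(2F_\mu+2F_\nu-1)/4$. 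You assert the opposite: that the integrand is concave in $C(s,t)$ and that the minimizer "wants $C$ as far as possible from the independent value $F_\mu F_\nu$" and "hugs the Fréchet bounds." Both parts of that picture are wrong: the target value is $F_\mu\diamond F_\nu$ (not $F_\mu F_\nu$), and since the quadratic is convex the minimizer is the \emph{projection} of $F_\mu\diamond F_\nu$ onto the Fréchet interval. Inside the diamond $D$ the constraint is slack and the optimizer sits exactly at $u\diamond v$, strictly in the interior of the Fréchet band — not at a boundary. This is not a cosmetic slip; with a concave integrand the pointwise optimum would be at a Fréchet extreme, which would produce the comonotone/antimonotone coupling rather than $\pi_{\rm dia}$, so the argument as stated does not recover the theorem.

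Two smaller points. First, your worry about whether the pointwise-optimal values assemble into a genuine copula is handled in the paper not by checking the $2$-increasing inequality directly, but by observing that the resulting cdf is exactly formula \eqref{eq:dia-cop}, which by construction (Definition \ref{def:diamond}) is the cdf of an honest coupling; so no separate verification is needed. Second, your alternative route via Proposition \ref{prop:lb from conditioning} or Proposition \ref{prop:lb from multimarginal} indeed does not go through, for the reason you identify (lack of global submodularity due to the kink), so it is good that you flag it as a dead end rather than rely on it.
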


\begin{proof}
%By Proposition \ref{prop:stability}, we may assume $\mu,\nu$ atomless by a limit procedure. 
For $(x,y),(x',y')\in\R^2$, denote by $[(x,y),(x',y')]$ the unique closed rectangle in $\R^2$ whose sides are parallel to the $xy$-axes and two of whose corners are given by $(x,y),(x',y')$ if $x\neq x'$ and $y\neq y'$, and the empty set otherwise. It holds that for $\pi\in\Pi(\mu,\nu)$,
\begin{align}
   & \iint |(x-x')(y-y')|\,\d\pi(x,y)\,\d\pi(x',y')\nonumber\\
   &=\iint\iint \bone_{\{(u,v)\in [(x,y),(x',y')]\}}\d\pi(x,y)\,\d\pi(x',y')\, \d u\,\d v\nonumber\\
    &=\iint\pi\otimes\pi(\{(x,y,x',y')\in\R^4: (u,v)\in [(x,y),(x',y')]\})\,\d u\,\d v.\label{eq:integ}
\end{align}
Our goal is to show that the integrand in \eqref{eq:integ} is uniquely minimized for all $(u,v)$ by ${\pi}=\pi_{\mathrm{dia}}$, which suffices for our purpose: since $\mu,\nu\in\P_1(\R)$, the transport cost for the independent coupling is finite, and hence so is the transport cost for $\pi_{\mathrm{dia}}$. 
For a fixed $(u,v)\in\R^2$ such that $\mu$ has no atom at $u$ and $\nu$ has no atom at $v$, define $A(u,v)=\pi((-\infty,u]\times(-\infty,v])$. Also, denote by $F_\mu,F_\nu$ the distribution functions of $\mu,\nu$, so $F_\mu$ is continuous at $u$ and $F_\nu$ is continuous at $v$. Using $\pi\in\Pi(\mu,\nu)$, we have 
\begin{align*}
    &\pi\otimes\pi(\{((x,y),(x',y'))\in\R^4: (u,v)\in [(x,y),(x',y')]\})\\
    &= 2\big(A(u,v)(1-F_\mu(u)-F_\nu(v)+A(u,v))+(F_\mu(u)-A(u,v))(F_\nu(v)-A(u,v))\big)\\
    &=4A(u,v)^2-(4F_\mu(u)+4F_\nu(v)-2)A(u,v)+2F_\mu(u)F_\nu(v).
\end{align*}
This is a quadratic function in $A(u,v)$, which is uniquely minimized on $\R$ at $A(u,v)=(2F_\mu(u)+2F_\nu(v)-1)/4 =F_\mu(u)\,\diamond F_{\nu}(v)$. Note that the feasible region for $A(u,v)$ is $(F_\mu(u)+F_{\nu}(v)-1)_+\leq A(u,v)\leq \min\{F_\mu(u),F_\nu(v)\}$. 
Hence, the unique minimizer is given by
$$
A(u,v) = \min\big\{\max\{ F_\mu(u)\,\diamond F_{\nu}(v), F_\mu(u)+F_{\nu}(v)-1,0\},F_\mu(u),F_\nu(v)\big \}.
$$
By \eqref{eq:dia-sklar} and \eqref{eq:dia-cop}, this is the cdf of the diamond transport.
% Suppose that $(F_\mu(u),F_\nu(v))\in[0,1/2]^2$. If $F_\mu(u)+F_\nu(v)<1/2$, the minimum is attained by $A(u,v)=0$; if $F_\mu(u)+F_\nu(v)\geq 1/2$, the minimum can be attained by $A(u,v)=(2F_\mu(u)+2F_\nu(v)-1)/4$. For both cases, it is checked that the minimizer is attained by $\pi_{\mathrm{dia}}$. 
% The other symmetric cases ($(F_\mu(u),F_\nu(v))\in[1/2,1]^2$, $(F_\mu(u),F_\nu(v))\in[1/2,1]\times[0,1/2]$, and $(F_\mu(u),F_\nu(v))\in[0,1/2]\times[1/2,1]$) are similar.    Since the set $(u,v)$ of continuity points of $F_\mu,F_\nu$ is dense, the values of $A(u,v)$ on such a set uniquely determine $\pi$, and hence $\pi_{\mathrm{dia}}$ is the unique minimizer.
\end{proof}

As explained in item (a) at the beginning of this section, 
Theorem \ref{prop:pidia} fully solves problem \eqref{eq:motivation1} described in the Introduction in the Kantorovich setting; that is, the diamond transport yields a minimum inequality quantified by \eqref{eq:motivation1}. 
For applications in the Monge setting,
this result also leads to approximately optimal transport maps by approximating the diamond transport with a permutation map, since the cost function is continuous.  
A notable feature of this solution, different from the comonotone or antimonotone coupling, 
is $\E[V\mid U]=1/2$ when $U,V\lawis C_{\rm dia}$. 
Hence, in the financial policy application described in the Introduction, if the marginal distributions are uniform, individuals across different wealth levels get the same benefit on average under the optimal policy.

In Table \ref{table:numbers2}, we report some numerical values for the cost function $c(x,y,x',y')=(|x-x'|+|y-y'|)^2$, with different couplings and marginal distributions. 
We can observe that, while $\pi_{\rm dia}$ uniquely minimizes the transport cost as shown in Theorem \ref{thm:diamond}, the situation is unclear 
for the maximizers. 

\begin{table}[t]\centering\setlength\tabcolsep{3.5pt}\renewcommand\arraystretch{2}
\small
  \noindent\makebox[\textwidth]{%
    \begin{tabular}{l|*{4}{c|}c} 
      \diagbox[width=\dimexpr \textwidth/6+2\tabcolsep\relax, height=1.5cm]{ Marginals }{Coupling}
                   & $\pi_{\rm com}$ & $\pi_{\rm ant}$ & $\pi_{\rm ind}$ & $\pi_{\rm x}^{0.5}$ & $\pi_{\rm dia}$\\
      \hline
      $\mathrm{U}(0,1)$, $\mathrm{N}(0,1)$ &(3.296) &   (3.296) &2.916& 3.051 & \textbf{2.884}\\
      \hline
       $\mathrm{U}(0,1)$, $\mathrm{U}(0,1)$ & (0.667) & (0.667)&0.555& 0.583    &\textbf{0.547}\\
      \hline   $\mathrm{N}(0,1)$, $\mathrm{N}(0,1)$ 
      & (8.001) &(8.001)  & 6.543 & 7.273 & \textbf{6.439}\\
      \hline
             $\mathrm{Exp}(1)$, $\mathrm{Exp}(1)$ & (7.998) & 6.580 &5.998& 6.772    &\textbf{5.763} \\ 
      \hline     
      $\mathrm{U}(0,1)$, $\mathrm{Exp}(1)$ & 3.166 &  (3.168) & 2.832 & 2.963     & \textbf{2.775} \\ 
      \hline   
          $\mathrm{N}(0,1)$, $\mathrm{Exp}(1)$ & 7.612 &  (7.615) & 6.255 & 7.002    & \textbf{6.007} \\ 
      \hline   
    \end{tabular}
  }\\~%
  \caption{
  Quadratic-form transport costs for different couplings $\pi_{\rm com}$, $\pi_{\rm ant}$, $\pi_{\rm ind}$, $\pi_{\rm x}^{0.5}$, and $\pi_{\rm dia}$, with the cost function $c(x,y,x',y')=(|x-x'|+|y-y'|)^2$. The marginals are chosen from $\mathrm{N}(0,1)$, $\mathrm{U}(0,1)$, or $\mathrm{Exp}(1)$.
  The smallest transport cost in each row 
is marked in bold and the largest in brackets.
  Transport costs are computed using Monte Carlo simulation with sample size $10^7$.}
  \label{table:numbers2}
\end{table}

The diamond transport is in general not Monge. Obtaining a Monge QOT minimizer for generic discrete marginals seems a technically challenging task, which we do not pursue here.

 One may recall from Example \ref{ex:intro} that for the marginals $\mu,\nu$ being Bernoulli($1/2$), the independent coupling uniquely minimizes the quadratic-form transport cost with the rectangular cost function.
It may be useful to note that for this particular pair of discrete marginals, 
the independent coupling coincides with the diamond coupling because the class $\Pi(\mu,\nu)$ only has one parameter.

\begin{remark}\label{rem:maximizer}
    If $\mu$ is an increasing (resp.~decreasing) location-scale transform of $\nu$, then a maximizer of the transport cost with the cost function $c$ in Theorem \ref{prop:pidia}  
 is the comonotone (resp.~antimonotone) coupling. Indeed, the function $(x,y)\mapsto -xy$ is submodular and the claim follows from Theorem \ref{prop:cocounter}. 
 %A similar argument to the proof of Theorem \ref{prop:cocounter} also implies that if $\mu(A)=\nu(b-A)$ for all Borel sets $A$ for some $b\in\R$, a maximizer is given by the antimonotone coupling. %GENERAL MU NU NOT CLEAR AS THE OT MAY TRANSIT BTW COMO AND COUNTERMO. WE HAVE SIMULATION THAT THE MAXIMIZER IS NEITHER.
\end{remark}

\begin{remark}
The absolute value in the cost function $c$ in Theorem \ref{prop:pidia} is important.
If $c$ is specified by $c(x,y,x',y')=(x-x')(y-y')$ (or equivalently, if the objective is $\cov(X,Y)$; c.f.~Example \ref{ex:cov}), then the minimizer is the antimonotone coupling instead of the diamond transport.%, because the problem is equivalent to minimizing  $\iint (xy+x'y')\,\d \pi(x,y) \d \pi(x',y')=2\int xy \d \pi(x,y)$; c.f.~Proposition \ref{ex:quadratic cost}.
\end{remark}

\subsection{A class of type-XX cost functions with convex QOT}
\label{sec:convex}

We next provide a general result on the diamond transport as the unique minimizer. An example is the cost function $$c(x,y,x',y')=e^{-\alpha((x-x')^2+(y-y')^2)}$$ for $\alpha\in(0,1/2]$. Specifically, one of our assumptions in this result is that the quadratic-form transport cost is convex in the transport plan $\pi$. Schoenberg's theory of complete monotonicity provides a convenient sufficient condition for such convexity. A nonnegative continuous function $\phi:\R_+\to\R_+$ is \textit{completely monotone} if $\phi$ is $C^\infty$ on $(0,\infty)$ and satisfies $(-1)^n \phi^{(n)}(u)\geq 0$ for $n\geq 0,\,u>0$ (see \cite[Section 4.6]{berg1984harmonic}). In particular, $\phi$ is bounded and decreasing. 
We use the standard calculus notation $\phi'$  
for the first derivative $\phi^{(1)}$, which should not be confused with the apostrophe in $x',y'$.

\begin{theorem}\label{thm:diamond}
         Suppose that both $\mu,\nu\in\M(\R)$ are symmetric and the cost function $c$ is given by
    \begin{align}
        c(x,y,x',y')=\phi((x-x')^2)\phi((y-y')^2)\label{eq:phi^2}
    \end{align}
    for some completely monotone function $\phi:\R_+\to \R$ satisfying 
    \begin{align}
        \phi'(u)+2u\phi''(u)\leq 0, ~~~u\in \mathcal{D},\label{eq:phi condition}
    \end{align} 
    where $\mathcal{D}\subseteq\R_+$ is such that $(\mu\otimes \mu)\{|x-x'|^2\in \mathcal{D}\}=(\nu\otimes \nu)\{|y-y'|^2\in \mathcal{D}\}=1$.
    Then the diamond transport $\pi_{\mathrm{dia}}$ is a minimizer of the QOT problem, and the unique minimizer if $\phi$ is non-constant.
\end{theorem}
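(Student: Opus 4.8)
The plan is to mirror the structure of the proof of Theorem \ref{prop:pidia}, but now working with the convexity furnished by Schoenberg's theorem rather than a pointwise quadratic in $A(u,v)$. First, I would record that under complete monotonicity of $\phi$, the kernel $(\bz,\bz')\mapsto \phi(\|\bz-\bz'\|^2)$ on $\R^2$ is positive definite (this is Schoenberg's classical characterization; see \cite{berg1984harmonic}), and since $\phi$ is bounded and continuous, Proposition \ref{prop:convex?} applies to show that $\pi\mapsto\iint c\,\d\pi\otimes\d\pi$ with $c$ as in \eqref{eq:phi^2} is a convex functional of $\pi$. Consequently, a minimizer over the convex set $\Pi(\mu,\nu)$ is characterized by a first-order (variational) inequality: $\pi^*$ is optimal iff for every $\pi\in\Pi(\mu,\nu)$,
\begin{align*}
\iint c\,\d\pi^*\otimes\d(\pi-\pi^*)\geq 0,
\end{align*}
i.e.\ the linear functional $\pi\mapsto\iint c(\bz,\bz')\,\d\pi^*(\bz')\,\d\pi(\bz)$ is minimized over $\Pi(\mu,\nu)$ at $\pi=\pi^*$. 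So it suffices to take $\pi^*=\pi_{\rm dia}$, compute the "potential" $\Phi(x,y)=\iint c(x,y,x',y')\,\d\pi_{\rm dia}(x',y')$, and show that the classic OT problem $\min_{\pi\in\Pi(\mu,\nu)}\int\Phi\,\d\pi$ is solved by $\pi_{\rm dia}$ itself.

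\textbf{The potential computation and its sign structure.} The hard part will be analyzing $\Phi$. Here I would exploit the symmetry of $\mu$ and $\nu$ (say about $m_\mu, m_\nu$) together with the symmetry of the diamond copula. The diamond copula is invariant under $(u,v)\mapsto(1-u,v)$ and under $(u,v)\mapsto(u,1-v)$; combined with symmetry of the marginals this should force $\pi_{\rm dia}$ to be invariant under the reflections $(x,y)\mapsto(2m_\mu-x,y)$ and $(x,y)\mapsto(x,2m_\nu-y)$. Using these reflections, one decomposes $\Phi(x,y)$ into a part depending on the "folded" coordinates and shows that the relevant second-order behavior is controlled by condition \eqref{eq:phi condition}. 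Concretely, for the classic OT step one wants $\Phi$ to have a sign pattern — after subtracting QOT-irrelevant (additively separable) terms — that makes it, say, submodular or supermodular so that $\pi_{\rm dia}$ (rather than $\pi_{\rm com}$ or $\pi_{\rm ant}$) is the classic OT minimizer. The condition $\phi'(u)+2u\phi''(u)\le 0$ is precisely what guarantees that the map $t\mapsto\phi(t^2)$ has the right curvature: writing $\psi(t)=\phi(t^2)$, one computes $\psi''(t)=2\phi'(t^2)+4t^2\phi''(t^2)=2(\phi'(u)+2u\phi''(u))|_{u=t^2}\le 0$, so $\psi$ is concave on the support. This concavity of $t\mapsto\phi(t^2)$ is the key analytic fact — it makes $(x,x')\mapsto\phi((x-x')^2)$ supermodular in the relevant region, which is what drives the optimality. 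I would then need to check that the folding does not destroy this and that the product structure $\phi((x-x')^2)\phi((y-y')^2)$ combined with the diamond's marginal structure (in particular $\E[V\mid U]=1/2$, or more precisely the conditional symmetry of $V$ given $U$ under $C_{\rm dia}$) makes the cross-terms in $\Phi$ vanish or align correctly.

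\textbf{Reduction to the rectangular case via integral representation.} An alternative, and possibly cleaner, route I would pursue in parallel: a completely monotone $\phi$ admits a Bernstein representation $\phi(u)=\int_{[0,\infty)}e^{-su}\,\d\rho(s)$ for a finite measure $\rho$. Then
\begin{align*}
c(x,y,x',y')=\iint e^{-s(x-x')^2}e^{-t(y-y')^2}\,\d\rho(s)\,\d\rho(t),
\end{align*}
so the transport cost is a mixture over $(s,t)$ of the "Gaussian-kernel" costs. If one can show the diamond transport is simultaneously optimal for every $c_{s,t}(x,y,x',y')=e^{-s(x-x')^2-t(y-y')^2}$ — under the constraint coming from \eqref{eq:phi condition}, which presumably restricts the effective range of $s,t$ relative to the support — then it is optimal for the mixture. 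For the Gaussian kernel, $-e^{-s(x-x')^2}$ should, via a further integral representation (e.g.\ $e^{-s r^2}$ as a scale mixture, or direct Fourier/Schoenberg arguments), reduce to controlling quantities like $\cos$ or indicator-type kernels, ultimately connecting back to the same pointwise-quadratic-in-$A(u,v)$ computation that proved Theorem \ref{prop:pidia}. I expect the main obstacle to be precisely pinning down why condition \eqref{eq:phi condition} is exactly the threshold: it should correspond to the regime where the relevant one-dimensional kernel $r\mapsto\phi(r^2)$ restricted to the support is concave (equivalently, $-\phi(r^2)$ is convex there), which is what allows the reduction to the rectangular/indicator computation to go through with the diamond as the unconditional minimizer rather than needing conditional-mean constraints. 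I would present the Bernstein-mixture reduction as the backbone and handle the $e^{-sr^2}$ building block by the same "integrate the cdf / minimize a pointwise quadratic in $A(u,v)$" technique, invoking symmetry of the marginals to kill the linear terms that would otherwise pull the optimum toward $\pi_{\rm com}$ or $\pi_{\rm ant}$.
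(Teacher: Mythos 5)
Your first strategy is essentially the paper's: convexity via Schoenberg's theorem and Proposition~\ref{prop:convex?}, reduction via the first-order condition to a classic OT problem with potential $\Phi(x,y)=\int c(x,y,\cdot,\cdot)\,\d\pi_{\rm dia}$, reflection symmetry of $\Phi$ in each coordinate, and the observation that $\psi(t)=\phi(t^2)$ is concave exactly under \eqref{eq:phi condition}. You also correctly flag uniqueness as coming from strict convexity. The one thing you gesture at but do not supply is the hard technical content, which is the paper's Lemma~\ref{lemma:1/2}: one must show that $\Phi$ is \emph{supermodular on the first and third quadrants and submodular on the second and fourth}; only this mixed sign pattern, combined with $c$-cyclical monotonicity of the classic OT minimizer and the fourfold reflection symmetry, pins down the symmetrized OT minimizer as the diamond. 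Your phrasing ``submodular or supermodular'' understates the difficulty: a globally modular $\Phi$ would yield $\pi_{\rm com}$ or $\pi_{\rm ant}$, not $\pi_{\rm dia}$. Establishing the quadrant-wise sign of the cross partial of $\Phi$ is a genuinely delicate computation (the paper's proof of Lemma~\ref{lemma:1/2} integrates $c_{xy}$ and $c_{xxy}$ against $C_{\rm dia}$, splits into the cases $p+q\le 1/2$ and $p+q\ge 1/2$, and uses $\psi'\le 0$, $\psi''\le 0$ term by term); that step is not ``checking'' but the bulk of the proof, and your write-up leaves it open.

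Your alternative Bernstein route has a concrete gap you only hedge around. Writing $\phi(u)=\int e^{-su}\,\d\rho(s)$, condition~\eqref{eq:phi condition} becomes $\int s(2us-1)e^{-su}\,\d\rho(s)\le 0$ on $\mathcal D$, which is a constraint on the \emph{mixture}, not on the individual exponents $s$. Thus the summands $e^{-s(x-x')^2}e^{-t(y-y')^2}$ need not individually satisfy the hypotheses of the theorem (the exponential case requires $\alpha\in(0,1/2]$, c.f.\ Example~\ref{ex:dia}(i)), and so ``show the diamond is optimal for each $(s,t)$ and integrate'' does not go through. The Bernstein representation is still useful for the positive-definiteness (and hence convexity) step, but it cannot replace the pointwise analysis of the potential; you would still need something like Lemma~\ref{lemma:1/2} at the mixture level.
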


 % {Here I only deal with the separable case. The non-separable case can also be done by assuming positive Fourier transform etc.~but I haven't found an example (besides the Gaussian, which is separable) that actually satisfies the condition. Numerics suggest that Laplacian or power law with small exponent does not give diamond.}
\begin{remark}
    The cost function \eqref{eq:phi^2} can be written as $c(x,y,x',y')=h(|x-x'|,|y-y'|)$, where $h(u,v)=\phi(u^2)\phi(v^2)$. Since $u,v\geq 0$ and $\phi'\leq 0$, we have $h_{uv}\geq 0$ and hence $-h$ is submodular. By 
    arguing in the same way as Remark \ref{rem:maximizer}, we see that under the setting of Theorem \ref{thm:diamond},     if $\mu$ is a location-scale transform of $\nu$, then the comonotone and antimonotone couplings are both maximizers.
\end{remark}
 
\begin{example}\label{ex:dia}
    We give three classes of examples of the type-XX cost function $c$ where the conditions in Theorem \ref{thm:diamond} are satisfied and the unique minimizer is given by $\pi_{\mathrm{dia}}$.
    \begin{enumerate}[(i)]
        \item Let $\phi(u)=e^{-\alpha u}$ where $\alpha\in(0,1/2]$. In this case, $$c(x,y,x',y')=e^{-\alpha((x-x')^2+(y-y')^2)}=e^{-\alpha\|(x,y)-(x',y')\|^2}.$$ The completely monotone condition is evident by definition. To check \eqref{eq:phi condition}, simply note that 
        $$ \phi'(u)+2u\phi''(u)=e^{-\alpha u}(-\alpha+2u\alpha^2)\leq 0$$
        if $u\leq 1/(2\alpha)$. Thus, the conditions in Theorem \ref{thm:diamond} are satisfied if $\mu,\nu$ are supported on $[-1/(2\alpha),1/(2\alpha)]$.
        \item Suppose that $\mu,\nu$ are supported on $[-1/2,1/2]$. Let $\phi(u)=(\beta+u)^{-\gamma}$, where $\gamma>0$ and $\beta>2\gamma+1$. This leads to the cost function $$c(x,y,x',y')=(\beta+|x-x'|^2)^{-\gamma}(\beta+|y-y'|^2)^{-\gamma}.$$ Again, the completely monotone condition is evident from the definition. To see \eqref{eq:phi condition}, we observe that
        $$\phi'(u)+2u\phi''(u)=\gamma (\beta+u)^{-\gamma-2}(2u(\gamma+1)-(\beta+u))\leq 0 $$
        for $u\in[0,1]$. 

        \item Suppose that $\mu,\nu$ are supported on $[-L,L]$ where $L>0$. Let $p\in(1/2,1)$ and $\phi(u)=e^{-\alpha u^p}$, where $0<\alpha\leq (2p-1)/(2p(4L^2)^p)$. The cost function is then given by 
        \begin{align}
            c(x,y,x',y')=e^{-\alpha(|x-x'|^{2p}+|y-y'|^{2p})}.\label{eq:p-cost}
        \end{align}
        The complete monotonicity of $\phi$ follows from Exercise 55.1 of \cite{sato1999levy} (which can be seen from the infinite divisibility of the Weibull distribution with parameter in $(0,1]$, a result in \cite{steutel1970preservatoin}). To verify \eqref{eq:phi condition}, we compute that for $u\in[0,4L^2]$,
        $$\phi'(u)+2u\phi''(u)=\alpha pu^{p-1}e^{-\alpha u^p}(1+2\alpha pu^p-2p)\leq 0.$$

    \end{enumerate}
   
\end{example}

To prove Theorem \ref{thm:diamond}, we introduce a technical lemma.
Define
 \begin{align}
        \tilde{c}(x,y):= \int c(x,y,x',y')\,\d\pi_{\mathrm{dia}}(x',y').\label{eq:tilde c}
    \end{align}

%By Proposition \ref{prop:stability}, we may assume that $\mu,\nu$ are atomless. This implies the existence of some function $\eta:(-\infty,0]\to (-\infty,0]$ such that the support of $\pi_{\mathrm{dia}}$ is contained in the union of the graphs of $\eta,\,-\eta,\,x\mapsto \eta(-x),$ and $x\mapsto -\eta(-x)$. By the antimonotone nature, we know that $\eta$ is decreasing with 

% \begin{lemma}\label{lemma:s}
% In the same setting as Theorem \ref{thm:diamond} where $\mu,\nu$ are symmetric around $0$, it holds 
%     \begin{align}
%         \tilde{c}(x,y)=\tilde{c}(-x,y)=\tilde{c}(x,-y)=\tilde{c}(-x,-y).\label{eq:4c}
%     \end{align}
% \end{lemma}
% \begin{proof}
%     By the symmetry of $\pi_{\mathrm{dia}}\in\Pi(\mu,\nu)$, we have 
% \begin{align*}
%     \tilde{c}(x,y)&=\int c(x,y,x',y')\,\d\pi_{\mathrm{dia}}(x',y')\\
%     &=\int c(x,y,-x',y')\,\d\pi_{\mathrm{dia}}(x',y')=\int c(-x,y,x',y')\,\d\pi_{\mathrm{dia}}(x',y')=\tilde{c}(-x,y),
% \end{align*}
% where we have used \eqref{eq:phi^2} in the third equality. Similarly, $\tilde{c}(x,y)=\tilde{c}(x,-y)$.
% \end{proof}

\begin{lemma}\label{lemma:1/2}
    Assume the same setting as Theorem \ref{thm:diamond}.  Then $\tilde{c}$ is supermodular on the first and third quadrants, and submodular on the second and fourth quadrants.
\end{lemma}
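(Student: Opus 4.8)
\textbf{Proof plan for Lemma \ref{lemma:1/2}.}

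The plan is to compute $\tilde c$ explicitly as a function of $(x,y)$ by integrating the product kernel against $\pi_{\mathrm{dia}}$, exploiting the special structure of the diamond copula and the symmetry of $\mu,\nu$. Writing $\phi((x-x')^2)$ as $h_x(x')$ and $\phi((y-y')^2)$ as $h_y(y')$, we have $\tilde c(x,y)=\E_{\pi_{\mathrm{dia}}}[h_x(X')h_y(Y')]$ where $(X',Y')\lawis\pi_{\mathrm{dia}}$. Recall that $(X',Y')\laweq(Q_\mu(U),Q_\nu(V))$ with $(U,V)\lawis C_{\rm dia}$, i.e. $(U,V)$ is uniform on the diamond $D$. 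The diamond $D$ decomposes into two diagonal segments: the ``$\diamond$'' constraint $|U-1/2|+|V-1/2|=1/2$ splits into a piece where $V-1/2 = \pm(U-1/2)$ on $U\le 1/2$ and the reflected pieces on $U>1/2$. Concretely, conditionally on $U=u$, $V$ takes two values, one ``comonotone-like'' ($V$ increasing in $u$) and one ``antimonotone-like'' ($V$ decreasing in $u$), each with probability $1/2$; more precisely $V\in\{2u\wedge(2-2u)\text{-type branches}\}$. The upshot is that $\pi_{\rm dia}$ is an equal mixture of two monotone-type couplings after a fold at the median, so $\tilde c(x,y)$ becomes an average of two one-dimensional integrals, each a composition of $Q_\mu(u)\mapsto x$ with a monotone reparametrization of $Q_\nu$.

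The key step is then to establish the sign of the mixed second difference of $\tilde c$. Since the statement is about supermodularity on the first and third quadrants and submodularity on the second and fourth, the natural route is to assume enough smoothness (the general case following by the usual approximation, as $\phi$ is $C^\infty$ on $(0,\infty)$ and one can first treat atomless marginals with continuous densities, then pass to the limit using Proposition \ref{prop:stability}-type stability) and compute $\partial_x\partial_y \tilde c(x,y)$. Differentiating under the integral, $\partial_x\partial_y\tilde c(x,y)=\E_{\pi_{\rm dia}}[\phi'((x-X')^2)\phi'((y-Y')^2)\cdot 4(x-X')(y-Y')]$. The factors $\phi'((x-X')^2)$ and $\phi'((y-Y')^2)$ are nonpositive by complete monotonicity, so the sign of the integrand at each $(X',Y')$ is governed by $\operatorname{sgn}((x-X')(y-Y'))$. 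The heart of the argument is to show that, because of the folded/diamond structure of $\pi_{\rm dia}$ and the symmetry of the marginals (which lets us center at the medians, WLOG $m_\mu=m_\nu=0$), the positive contribution dominates the negative one when $xy>0$ (first/third quadrant) and vice versa when $xy<0$. This is where condition \eqref{eq:phi condition}, $\phi'(u)+2u\phi''(u)\le 0$, enters: it controls the monotonicity of $u\mapsto u^{1/2}\phi'(u)$ (equivalently the behaviour of $r\mapsto r\phi'(r^2)$), which is exactly the weight attached to the displacement $(x-X')$ in the integrand, and it is what makes the comonotone-branch contribution outweigh the antimonotone-branch contribution in the correct direction on each quadrant.

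The main obstacle I anticipate is precisely this last comparison: carefully pairing up, for a fixed $(x,y)$ in (say) the first quadrant, the mass that $\pi_{\rm dia}$ puts on points $(X',Y')$ with $(x-X')(y-Y')>0$ against the mass on points with $(x-X')(y-Y')<0$, and showing the weighted balance has the right sign using \eqref{eq:phi condition} together with symmetry. A clean way to organize this is to use the mixture representation $\pi_{\rm dia}=\tfrac12\pi^+ + \tfrac12\pi^-$ (folded comonotone and folded antimonotone pieces about the medians), reduce $\partial_x\partial_y\tilde c$ to a single integral over $u\in[0,1]$ of a product of a fixed-sign factor and a term whose sign flips at $u=1/2$, and then apply a second-derivative / Chebyshev-type correlation inequality where the hypothesis \eqref{eq:phi condition} guarantees the required monotonicity of the relevant integrand. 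I would also keep track of the quadrant of $(x,y)$ only through the two scalars $\operatorname{sgn}(x)$ and $\operatorname{sgn}(y)$, so that the first/third and second/fourth cases are handled by one computation with a sign bookkeeping at the end.
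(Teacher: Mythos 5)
Your plan is on the same track as the paper's actual proof: reduce to one quadrant by the $(\pm x,\pm y)$ symmetry of $\tilde c$ (coming from the symmetry of $\mu,\nu$), compute $c_{xy}(x,y,x',y')=4(x-x')(y-y')\phi'((x-x')^2)\phi'((y-y')^2)$, observe that since $\phi'\le 0$ the sign of the integrand is $\sgn(x-x')\sgn(y-y')$, and feed in condition \eqref{eq:phi condition}. Your reading of that condition is also correct: it is exactly equivalent to $\psi''\le 0$ where $\psi(u)=\phi(u^2)$, i.e.\ $r\mapsto r\phi'(r^2)$ is decreasing, which is how the paper uses it.

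The gap is in the step you flag as the ``main obstacle'' and propose to close with a Chebyshev/correlation inequality. Writing $x=Q_\mu(p)$, $y=Q_\nu(q)$ with $p,q\le 1/2$ and decomposing the integral over the four branches of the diamond, the paper's term-by-term pairing (grouping into nonnegative pieces $I_1,\dots,I_5$ after a change of variable $p'\mapsto 1-p'$ and using $\psi''\le 0$) only goes through when $p+q\le 1/2$. When $1/2\le p+q\le 1$ the sign pattern on the branches changes and the direct cancellation fails; the paper then switches to a different argument, proving that $\partial_x\partial_y\tilde c$ vanishes on the line $p=1/2$ (by symmetry) and that $\partial_x(\partial_x\partial_y\tilde c)\le 0$ for $p\le 1/2$, then integrating. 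This second part requires computing $c_{xxy}$, which produces a distributional $\delta_{p-p'}$ term from differentiating the $\sgn$ factor, and controlling both pieces separately --- a genuinely different mechanism than a single correlation inequality. Your proposal does not anticipate this regime split or the need for the third-derivative / boundary argument, and a one-shot Chebyshev-type bound over the whole quadrant is not enough: the ``positive mass dominates negative mass'' claim is false in the naive form once $p+q>1/2$, and only the integrated/differentiated version survives. So the plan is a reasonable outline of the first half of the proof but is missing the key idea that makes the second half work.
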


The proof of Lemma \ref{lemma:1/2} involves detailed analysis and is deferred to Appendix \ref{app:sec5}.

\begin{proof}[Proof of Theorem \ref{thm:diamond}]
We first claim that the resulting QOT problem is convex in $\pi$. Since $\phi$ is completely monotone, {the function $\psi(u):=\phi(u^2)$ is a continuous positive definite function on $\R$ by Schoenberg's theorem (Theorem 4.6.13 and Example 5.1.3 of \cite{berg1984harmonic}). In this way, we write $c(x,y,x',y')=\psi(x-x')\psi(y-y')$ where $\psi$ is positive definite, so $(x,x')\mapsto \psi(x-x')$ and $(y,y')\mapsto \psi(y-y')$ are positive definite kernels.} By the Schur product theorem, the product of two positive definite {kernels} is also positive definite (see Theorem 3.1.12 of \cite{berg1984harmonic}). Therefore, $c(x,y,x',y')$ is a positive definite kernel in the two variables $(x,y),(x',y')\in\R^2$. Proposition \ref{prop:convex?} then implies the QOT problem is convex in $\pi$.

Since the cost function \eqref{eq:phi^2} is translation-invariant, we may assume that $\mu,\nu$ are both symmetric along $0$. We next verify 
    \begin{align}
        \tilde{c}(x,y)=\tilde{c}(-x,y)=\tilde{c}(x,-y)=\tilde{c}(-x,-y).\label{eq:4c}
    \end{align}
      By the symmetry of $\pi_{\mathrm{dia}}\in\Pi(\mu,\nu)$, we have 
\begin{align*}
    \tilde{c}(x,y)&=\int c(x,y,x',y')\,\d\pi_{\mathrm{dia}}(x',y')\\
    &=\int c(x,y,-x',y')\,\d\pi_{\mathrm{dia}}(x',y')=\int c(-x,y,x',y')\,\d\pi_{\mathrm{dia}}(x',y')=\tilde{c}(-x,y),
\end{align*}
where we have used \eqref{eq:phi^2} in the third equality. Similarly, $\tilde{c}(x,y)=\tilde{c}(x,-y)$, and hence \eqref{eq:4c} holds.

Consider the classic OT problem with cost function $\tilde{c}$ given by \eqref{eq:tilde c}. Denote a minimizer by $\pi_{\tilde{c}}$. By linearity of the classic OT problem and \eqref{eq:4c}, another minimizer is given by the symmetrized version $\hat{\pi}_{\tilde{c}}$ of $\pi_{\tilde{c}}$: the law of the uniform mixture of $(X,Y),\,(-X,Y),\,(X,-Y),$ and $(-X,-Y)$ where $(X,Y)\lawis \pi_{\tilde{c}}$. 
Therefore, $\hat{\pi}_{\tilde{c}}$ is symmetric along the $x$ and $y$ axes.
%By symmetry and since $\pi_{\tilde{c}}\in\Pi(\mu,\nu)$, $\hat{\pi}_{\tilde{c}}$ has uniform marginals on each of the four quadrants. 
By cyclical monotonicity and Lemma \ref{lemma:1/2}, any minimizer is antimonotone on the first and third quadrants and comonotone on the second and fourth quadrants. This implies that $\hat{\pi}_{\tilde{c}}=\pi_{\mathrm{dia}}$, and hence ${\pi_{\mathrm{dia}}}$ is a minimizer of the OT problem with cost function $\tilde{c}$.  

Suppose that $\pi\in\Pi(\mu,\nu)$ satisfies
$$\iint c\,\d\pi\otimes\d\pi<\iint c\,\d\pi_{\mathrm{dia}}\otimes\d\pi_{\mathrm{dia}}.$$
For $\delta\in[0,1]$, let $\pi_\delta=(1-\delta)\pi_{\mathrm{dia}}+\delta \pi$. It follows by convexity of the QOT that $\delta\mapsto \iint c\,\d\pi_\delta\otimes\pi_\delta$ is convex in $\delta$, so there exists $\varepsilon>0$ such that for all $\delta\in[0,1]$ small enough,
\begin{align*}
    &\iint c\,\d\pi_{\mathrm{dia}}\otimes\d\pi_{\mathrm{dia}}-\delta\varepsilon\\
    &>\iint c\,\d\pi_\delta\otimes\pi_\delta\\
    &=(1-\delta)^2\iint c\,\d\pi_{\mathrm{dia}}\otimes\d\pi_{\mathrm{dia}}+2\delta(1-\delta)\iint c\,\d\pi_{\mathrm{dia}}\otimes\d\pi+\delta^2 \iint c\,\d\pi\otimes\d\pi\\
    &= \iint c\,\d\pi_{\mathrm{dia}}\otimes\d\pi_{\mathrm{dia}}+2\delta\bigg(\iint c\,\d\pi_{\mathrm{dia}}\otimes\d\pi-\iint c\,\d\pi_{\mathrm{dia}}\otimes\d\pi_{\mathrm{dia}}\bigg)+O(\delta^2).
\end{align*}
Therefore, letting $\delta\to 0$ yields
$$\iint c\,\d\pi_{\mathrm{dia}}\otimes\d\pi<\iint c\,\d\pi_{\mathrm{dia}}\otimes\d\pi_{\mathrm{dia}},$$
    contradicting $\pi_{\mathrm{dia}}$ being a minimizer of the OT problem with cost function $\tilde{c}$.

    The final claim on uniqueness follows from the fact that if $\phi$ is non-constant, the cost function $c$ is {a strictly positive definite kernel} in the two variables $(x,y),(x',y')\in\R^2$ (Theorem 3' of \cite{schoenberg1938metric}), and hence the transport cost is strictly convex in $\pi$ with a unique minimizer. 
\end{proof}

\subsection{The \texorpdfstring{$q$}--rectangular cost function}\label{sec:p-cost}

Applying Theorem \ref{thm:diamond}, we show that the diamond transport solves another class of QOT problems with the \emph{$q$-rectangular cost function} $|(x-x')(y-y')|^q$ for $1<q\leq 2$. 
Note that in these cases, the transport cost is not convex in $\pi$ in general, since the map $((x,y),(x',y'))\mapsto |(x-x')(y-y')|^q$ is not a positive definite kernel.
  QOT with this cost function is equivalent to the one with cost function 
$ (|x-x'|^q+|y-y'|^q)^2$, or equivalently, the maximization of the $(2,q)$-GW transport cost defined in  \eqref{eq:GWpq}.

\begin{theorem}\label{thm:p-cost}
 Let $q\in(1,2]$ and $\mu,\nu\in\P_{2+\delta}(\R)$ for some $\delta>0$. Suppose that $\mu,\nu$ are symmetric and the cost function $c$ is given by 
 $$c(x,y,x',y')=|(x-x')(y-y')|^q.$$ Then the diamond transport $\pi_{\mathrm{dia}}$ is a minimizer of the QOT problem. % with cost function $c$ and marginals $\mu,\nu$.
\end{theorem}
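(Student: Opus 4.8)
\textbf{Proof proposal for Theorem \ref{thm:p-cost}.}

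The plan is to reduce the $q$-rectangular cost to the family of cost functions already handled by Theorem \ref{thm:diamond} via a superposition (integral representation) argument, exploiting the fact that $u\mapsto u^q$ for $q\in(1,2]$ is, up to sign and normalization, an integral of the exponentials $u\mapsto 1-e^{-su}$ against a suitable positive measure. Concretely, there is a L\'evy--Khintchine-type representation $t^{q/2}=c_q\int_0^\infty (1-e^{-st})\,s^{-1-q/2}\,\d s$ for $t\geq 0$ and $q\in(0,2)$, with $c_q>0$; applying this with $t=|x-x'|^2$ and $t=|y-y'|^2$ gives $|x-x'|^q = c_q\int_0^\infty(1-e^{-s|x-x'|^2})\,s^{-1-q/2}\,\d s$ and similarly for $|y-y'|^q$. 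Then $|(x-x')(y-y')|^q = |x-x'|^q|y-y'|^q$ factors into a double integral over $(s,s')\in(0,\infty)^2$ of the functions $(1-e^{-s|x-x'|^2})(1-e^{-s'|y-y'|^2})$ against the positive product measure $c_q^2 (ss')^{-1-q/2}\,\d s\,\d s'$.

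First I would verify the integrability needed to interchange the $(s,s')$-integration with the double $\pi\otimes\pi$-integration: since $\mu,\nu\in\P_{2+\delta}(\R)$, the quantity $\iint |x-x'|^q|y-y'|^q\,\d\pi\otimes\d\pi$ is finite uniformly over $\pi\in\Pi(\mu,\nu)$ (as $q\leq 2<2+\delta$, Cauchy--Schwarz plus the moment bound suffices), which justifies Fubini/Tonelli and reduces the QOT cost to
$$
\iint c\,\d\pi\otimes\d\pi = c_q^2\int_0^\infty\!\!\int_0^\infty \Big(\iint (1-e^{-s|x-x'|^2})(1-e^{-s'|y-y'|^2})\,\d\pi\otimes\d\pi\Big)\frac{\d s\,\d s'}{(ss')^{1+q/2}}.
$$
Next I would expand $(1-e^{-s|x-x'|^2})(1-e^{-s'|y-y'|^2}) = 1 - e^{-s|x-x'|^2} - e^{-s'|y-y'|^2} + e^{-s|x-x'|^2}e^{-s'|y-y'|^2}$. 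The first three terms are QOT-irrelevant (the constant $1$ contributes nothing to the comparison, and $e^{-s|x-x'|^2}$, $e^{-s'|y-y'|^2}$ depend only on $(x,x')$ or on $(y,y')$, hence their $\pi\otimes\pi$-integrals are determined by $\mu,\nu$ alone). So for \emph{each fixed} $(s,s')$ the inner integral equals a constant plus $\iint \phi_s((x-x')^2)\phi_{s'}((y-y')^2)\,\d\pi\otimes\d\pi$ with $\phi_s(u)=e^{-su}$ — exactly the cost function of Theorem \ref{thm:diamond}. I then need to check the hypotheses of Theorem \ref{thm:diamond} hold for $\phi_s$ uniformly in $s>0$: complete monotonicity of $u\mapsto e^{-su}$ is immediate, and the condition $\phi_s'(u)+2u\phi_s''(u)\leq 0$ reads $e^{-su}(-s+2us^2)\leq 0$, i.e. $u\leq 1/(2s)$; this fails for large $s$, so I cannot apply Theorem \ref{thm:diamond} verbatim for every $s$.

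This last point is the main obstacle, and it is where the argument needs real care. There are two routes. The cleaner one: do not invoke Theorem \ref{thm:diamond} as a black box, but rather its engine — show directly that for \emph{every} fixed $(s,s')$ the coupling $\pi_{\rm dia}$ minimizes $\pi\mapsto\iint(1-e^{-s|x-x'|^2})(1-e^{-s'|y-y'|^2})\,\d\pi\otimes\d\pi$, \emph{without} the restrictive condition \eqref{eq:phi condition}. Here I would argue that the kernel $((x,y),(x',y'))\mapsto (1-e^{-s(x-x')^2})(1-e^{-s'(y-y')^2})$ — after discarding the QOT-irrelevant pieces — is \emph{negative definite} in an appropriate sense (a product of the conditionally negative definite kernels $(x,x')\mapsto 1-e^{-s(x-x')^2}$, which by Schoenberg are squared-distance-like), so the QOT functional is \emph{concave}? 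No — we want convexity for a minimization argument. Instead the robust route is: since $1-e^{-su}$ for $u\ge 0$ is itself $\int_0^\infty \mathbf 1\{u>r\}\,se^{-sr}\,\d r$... that just re-expresses it. The honest fix is to observe that Theorem \ref{thm:diamond}'s proof uses \eqref{eq:phi condition} only to obtain Lemma \ref{lemma:1/2} (supermodularity of $\tilde c$ on the odd quadrants); I would instead prove the pointwise-in-$(u,v)$ statement used in Theorem \ref{prop:pidia} — that the integrand $\pi\otimes\pi(\{(u,v)\in[(x,y),(x',y')]\})$ form generalizes — by writing $|(x-x')(y-y')|^q$ via its own layer-cake in a way that keeps the quadratic-in-$A(u,v)$ structure. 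Since that structure genuinely breaks for $q\ne 1$, the realistic plan is: (a) establish the superposition formula above; (b) prove a strengthened version of Theorem \ref{thm:diamond} valid for all $\phi_s=e^{-s\cdot}$ (equivalently, show $\pi_{\rm dia}$ minimizes the Gaussian-kernel QOT for \emph{every} bandwidth when the marginals are symmetric — I expect this to follow by a scaling/monotone-rearrangement refinement of Lemma \ref{lemma:1/2}, perhaps using that the obstruction only occurs on a set of $(x,x')$ with $|x-x'|$ large, which symmetry and the diamond's conditional-mean-$1/2$ property neutralize); (c) integrate the per-$(s,s')$ optimality against the positive measure $(ss')^{-1-q/2}\,\d s\,\d s'$ to conclude $\pi_{\rm dia}$ minimizes the integral, hence the $q$-rectangular QOT. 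Step (b) is the crux; if a clean strengthening is unavailable, a fallback is to approximate $|(x-x')(y-y')|^q$ by truncating the $(s,s')$-integral to a compact box $[0,S]^2$ where \eqref{eq:phi condition} holds after rescaling the (compactly supported, then limiting) marginals, apply Theorem \ref{thm:diamond} there, and pass to the limit using the uniform moment bound from $\P_{2+\delta}$ together with the stability of QOT (Proposition \ref{prop:stability}).
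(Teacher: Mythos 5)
There is a genuine gap, and you put your finger on it yourself: your reduction requires per-bandwidth optimality of $\pi_{\rm dia}$ for the kernel $e^{-s(x-x')^2}e^{-s'(y-y')^2}$ over \emph{all} $s,s'>0$, but condition \eqref{eq:phi condition} forces $s\le 1/(2\sup|x-x'|^2)$, so the hypothesis of Theorem \ref{thm:diamond} fails as soon as $s$ exceeds a threshold determined by the support of $\mu$. Neither of your proposed fixes repairs this. Route~(b) (a ``strengthened Theorem \ref{thm:diamond} for every bandwidth'') is most likely \emph{false}: Appendix \ref{sec:dispersion} strongly suggests that as the $x$-bandwidth grows, the problem drifts toward favouring the independent coupling over $\pi_{\rm dia}$, so the per-$(s,s')$ statement cannot be a theorem. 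Route~(c) (truncating the $(s,s')$-integral to $[0,S]^2$) is also broken: for the truncated kernel to fall under Theorem \ref{thm:diamond} you need $S\lesssim 1/L^2$ where $[-L,L]$ supports the marginals, but to recover $|(x-x')(y-y')|^q$ you need $S\to\infty$ with $L$ fixed; these two requirements are in direct conflict, and no rescaling decouples them because $s$ and $L$ enter only through the product $sL^2$. Two further problems: (i) Theorem \ref{thm:diamond} as stated requires the \emph{same} $\phi$ in both slots, whereas your decomposition produces $\phi_s$ in the $x$-slot and $\phi_{s'}$ in the $y$-slot; you would need to re-prove Lemma \ref{lemma:1/2} in this asymmetric form. (ii) The L\'evy--Khintchine identity $t^{q/2}=c_q\int_0^\infty(1-e^{-st})s^{-1-q/2}\,\d s$ holds only for $q/2\in(0,1)$, so it does not cover the endpoint $q=2$ that the theorem claims.

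The paper sidesteps all of this by \emph{not} decomposing into bandwidths. It takes a single $\phi(u)=e^{-\alpha u^{q/2}}$ (Example \ref{ex:dia}-(iii); for $q=2$, Example \ref{ex:dia}-(i)), observes that for marginals on $[-L,L]$ and $\alpha$ small enough (depending on $L$ and $q$) the hypotheses of Theorem \ref{thm:diamond} are satisfied, so $\pi_{\rm dia}$ is optimal for each such $\alpha$. It then forms the centred rescaling
\begin{align*}
c_\alpha(x,y,x',y')=\frac{e^{-\alpha(|x-x'|^q+|y-y'|^q)}-1+\alpha(|x-x'|^q+|y-y'|^q)}{\alpha^2},
\end{align*}
which differs from the exponential only by a QOT-irrelevant additive term and a positive scalar, hence has the same minimizer; sends $\alpha\to 0$ and uses uniform convergence on $[-L,L]^4$ of $c_\alpha$ to $\tfrac12(|x-x'|^q+|y-y'|^q)^2$ to transport optimality to the limit; strips the QOT-irrelevant pieces $\tfrac12|x-x'|^{2q}$ and $\tfrac12|y-y'|^{2q}$ to recover $|(x-x')(y-y')|^q$; and finally drops the compactness assumption via the stability result (Proposition \ref{prop:stability}) together with the Cauchy--Schwarz moment bound you also noted. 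The crucial advantage is that the $\alpha\to 0$ limit only ever requires the small-bandwidth regime where \eqref{eq:phi condition} holds, whereas your $(s,s')$-superposition must traverse the large-bandwidth regime where it does not. You were close in spirit --- the reduction to exponential kernels and the use of stability are both in the paper --- but the specific superposition you chose cannot be carried out.
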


\begin{proof} Fix $q\in(1,2]$ and assume without loss of generality that $\mu,\nu$ are both symmetric around $0$. Assume first that $\mu,\nu$ are supported in $[-L,L]$ for some $L>0$. 
For $\alpha>0$, consider the cost function
$$c_\alpha(x,y,x',y'):=\frac{e^{-\alpha(|x-x'|^q+|y-y'|^q)}-1+\alpha(|x-x'|^q+|y-y'|^q)}{\alpha^2}.$$
    The QOT problem with cost function $c_\alpha$ is equivalent to that with cost function $e^{-\alpha(|x-x'|^q+|y-y'|^q)}$ (c.f.~\eqref{eq:p-cost}), since the term $|x-x'|^q+|y-y'|^q$ is QOT-irrelevant. By Theorem \ref{thm:diamond} and Example \ref{ex:dia}-(iii), for $0<\alpha\leq (q-1)/(q(4L^2)^{q/2})$, the unique minimizer is given by the diamond transport $\pi_{\mathrm{dia}}$.

    On the other hand, by the Taylor expansion, we have 
    $$c_\alpha(x,y,x',y')\to \frac{1}{2}(|x-x'|^q+|y-y'|^q)^2$$
    uniformly on $[-L,L]^4$ as $\alpha\to 0$. Therefore, the QOT problem with cost function $(|x-x'|^q+|y-y'|^q)^2/2$ also has $\pi_{\mathrm{dia}}$ as a minimizer. Removing the QOT-irrelevant terms, we see that this is also the case for the cost function $|(x-x')(y-y')|^q$, as desired.

    Now suppose that $\mu,\nu\in\P_{2+\delta}(\R)$. We may then apply the stability for QOT (Proposition \ref{prop:stability}) to approximate $\mu,\nu$ with measures with bounded supports. It remains to show \eqref{eq:uniform integrable}, i.e., 
    $$\sup_{\pi\in\Pi(\mu,\nu)}\iint |(x-x')(y-y')|^{q+\delta/2}\d\pi(x,y)\,\d\pi(x',y')<\infty.$$
    Indeed, this follows from Cauchy--Schwarz: there exists some constant $C(p,\delta)>0$ such that uniformly for $\pi\in\Pi(\mu,\nu)$,
    \begin{align*}
        \iint |(x-x')(y-y')|^{q+\delta/2}\d\pi(x,y)\,\d\pi(x',y')&=\E_{\pi\otimes\pi}[|X-X'|^{q+\delta/2}|Y-Y'|^{q+\delta/2}]\\
        &\leq \E_{\pi\otimes\pi}[|X-X'|^{2q+\delta}]^{1/2}\E_{\pi\otimes\pi}[|Y-Y'|^{2q+\delta}]^{1/2}\\
        &\leq C(p,\delta)\E_\mu[|X|^{2q+\delta}]^{1/2}\E_\nu[|Y|^{2q+\delta}]^{1/2}.
    \end{align*}
    This completes the proof.
\end{proof}

Different from the case of $q=1$ analyzed in Theorem \ref{thm:diamond}, the assumption that both $\mu,\nu$ are symmetric  in Theorem \ref{thm:p-cost} is essential for the diamond transport to minimize the QOT problem when $q\in (1,2]$. 
In some unreported numerical     results, we find that for asymmetric marginals,   (approximate) QOT minimizers are quite different from the diamond transport even for the simple case $q=2$.

For the cost function 
$c(x,y,x',y')=|(x-x')(y-y')|^q,~q\in [1,2]$, 
a maximizer of the QOT problem is given by the comonotone coupling $\pi_{\rm com}$ when $\mu=\nu$, as explained in Example \ref{ex:47}-(iii) (this also holds true for $q>0$). 
For this class of QOT problems, 
we know neither explicit maximizers when $\mu$ and $\nu$ are not identical 
nor explicit minimizers when $\mu$ and $\nu$ are not symmetric.

\section{Conclusion}\label{sec:concl}
The new framework of quadratic-form optimal transport (QOT) is proposed, with the key feature that the transport cost is linear in $\pi\otimes\pi$. Due to the possible non-convex structure, the QOT problem is difficult to solve numerically. 
We prove fundamental properties of QOT and highlight cases with explicit solutions, summarized in Table \ref{tab:example}. 
%Furthermore, leveraging QOT, we introduce a quantitative measure of dispersion that captures the fluctuation of a response variable $Y$ over a covariate $X$.
Compared to classic OT, QOT gives rise to two new and special optimal transport plans, the V-transport and the diamond transport. The latter is particularly interesting since it is not Monge, but serves as a universal minimizer of wide classes of QOT problems (Theorems \ref{thm:diamond} and \ref{thm:p-cost}), some of which are non-convex.

As a new framework, there are many unsolved problems on QOT. We briefly list some promising and important directions below. Details of these directions are further explained in Appendix \ref{sec:EC-disc}.

\begin{enumerate}[(i)]

\item In view of Brenier's theorem (\cite{brenier1987decomposition}), 
classic OT has a Monge solution under standard conditions.
We wonder whether a similar phenomenon exists for QOT. For instance, some transport plans supported on the union of the graphs of two maps, such as $\pi_{\mathrm{dia}}$ and $\pi^{\lambda}_{\mathrm{x}}$ are optimal for some QOT problems, but a general picture is not clear.  

\item It is worth exploring how our explicit QOT results, especially the diamond transport, can lead to solutions to QAP.

%On the other hand, the analysis of the cost function $c(x,y,x',y')=|(x-x')(y-y')|^q,~q\in[1,2]$ seems reminiscent in the QAP literature, and hence our results in Section \ref{sec:diamond} may potentially inspire new explicitly solvable cases in QAP. In particular, we expect that solving for \eqref{eq:motivation1} above (equivalent to the case $q=2$) in the Monge setting may leverage on tools in the QAP literature, where the minimizer is a discrete approximation to the diamond transport in a suitable sense (since the transport cost is continuous in the weak topology).

\item QOT may give rise to various applications to classic OT, especially through regularization that is different from the quadratically regularized OT discussed in Example \ref{ex:qf-reg}. %We anticipate that our work will inspire various applications of QOT to classic OT. For example, building on the convex QOT cost functions introduced in Section \ref{sec:convex}, a theory of (convex) quadratic-form regularized OT can be developed. Unlike the quadratically regularized OT discussed in Example \ref{ex:qf-reg}, the quadratic-form approach offers a rich variety of parameterized regularizer classes (Example \ref{ex:dia}) and does not require the solution to be absolutely continuous with respect to the independent coupling.
%We expect that the quadratic-form regularized OT generally also leads to sparse (or even singular) couplings.

    \item It remains open to solve explicitly the QOT minimizers for many simple cost functions, 
    such as that in Theorem \ref{thm:diamond} without symmetry assumption,   $|(x-x')(y-y')|^q$ for $q>1$, and 
    $\min\{|x-x'|,|y-y'|\}$.

\end{enumerate}

\section*{Acknowledgments}
We thank Jose Blanchet,  Job Boerma, Marcel Nutz, and Johannes Wiesel for helpful discussions,   Shengtong Zhang for proposing a simple proof of Theorem \ref{prop:pidia} in the case of uniform marginals,  and Facundo M\'{e}moli for  bringing up a series of relevant literature on the Gromov--Wasserstein distance. We also thank two referees for their helpful comments that improved this paper. 
R.~Wang acknowledges financial support from the Natural Sciences and Engineering Research Council of Canada (RGPIN-2024-03728, CRC-2022-00141). 

\section*{Declaration of interests}
The authors have no competing interests to declare that are relevant to the content of this article.

\bibliography{bib}
\bibliographystyle{abbrvnat}

  \appendix

  %\appendixpage

%\ECHead{E-companion: Technical Appendices}

%The e-companion contains seven appendices on additional results, proofs of technical results, discussions, and extensions.

\begin{center}
\LARGE \bf Appendices
\end{center}

\section{Quadratic programming formulation}
\label{app:QPF}

In the discrete case where $\mu$ and $\nu $ are supported on $N$ and $M$ points, respectively, QOT can be formulated by a quadratic program.
Denote by $\{x_1,\dots,x_N\}$ the support of $\mu$  
and by $\{y_1,\dots,y_M\}$ the support of $\nu$.
Let $\mu_i=\mu(\{x_i\})$  for $i\in [N]$
and $\nu_j=\nu(\{y_j\})$  for $j\in [M]$. 
Now the measure $\pi$ can be expressed by a matrix, and we write
$\pi=(\pi_{ij})_{i\in [N], j\in [M]}$.
QOT
can be written as the quadratic program 
\begin{align}
\label{eq:program}
\begin{aligned}
\mbox{to minimize~~} &\sum_{i,k\in [N],j,\ell\in [M]}c(x_i,y_j,x_k,y_\ell) \pi_{ij} \pi_{k\ell} \\
\mbox{over~~}&  \pi\in \R_+^{N\times M} 
\\ \mbox{subject to~~}& \sum_{j\in [M]}  \pi_{ij}=\mu_i  
\mbox{~~for all $i\in [N]$}\\& \sum_{i\in [N]} 
 \pi_{ij}=\nu_j \mbox{~~for all $j\in [M]$}.
 \end{aligned}
\end{align}
If one considers Monge QOT, there is the extra constraint that $\pi_{ij}\in\{0,\mu_i\}$, and the problem is not a quadratic program.

Further,
let us denote by  $\bdpi\in
\R^{NM} 
$  the vectorization of $\pi$, which has entries 
$$
(\bdpi)_{i(M-1)+j} = \pi_{ij},~~~~i\in [N];~j\in [M].
$$
Let 
$\boldsymbol \mu\in
\R^{N} 
$ and   $\boldsymbol \nu\in
\R^{M} 
$ be the vectorizations of  $\mu$ and $\nu$,
respectively, 
and let $\mathbf 1_n$ be the vector $(1,\dots,1)\in \R^n$. 
Moreover, let $C$ be the $NM\times NM$ matrix with entries  given by $$
C_{i(M-1)+j, k(M-1)+\ell}=
c(x_i,y_j,x_k,y_\ell),~~~~i,k\in [N];~j,\ell\in [M].$$ 
Then \eqref{eq:program}
has the following concise form
\begin{align}
\label{eq:program2}
\begin{aligned}
\mbox{to minimize~~} &  
\bdpi ^\top C \bdpi
\\
\mbox{over~~}&  \bdpi\in \R_+^{NM} 
\\ \mbox{subject to~~}& 
\pi\mathbf{1}_N  = \boldsymbol \mu \mbox{~~and~~} \pi^\top  \mathbf{1}_M = \boldsymbol \nu. 
 \end{aligned}
\end{align}
%Here and later, we denote by $\R_+$ the set of nonnegative real numbers.
Note that the constraints in \eqref{eq:program2} are written in matrix form, which can also be written in vector form, but is less concise.

%\section{General bounds}\label{sec:lb}

\section{Independent coupling is rarely a QOT minimizer}\label{sec:no ind}

In this section, we prove the following result, which states that similarly to classic OT, under mild conditions, the independent coupling cannot be a QOT optimizer. Recall the notation $\pi_{\mathrm{ind}}=\mu\otimes\nu$.

\begin{proposition}\label{prop: no ind}
Let $\mu\in\M(\X)$ and $\nu\in\M(\Y)$. 
Suppose that there exist continuous functions $c_X\in L^1(\mu)$ and $c_Y\in L^1(\nu)$ such that the cost function  $c$ is jointly continuous and satisfies
\begin{align}
    |c(x,y,x',y')|\leq c_X(x)+c_Y(y)+c_X(x')+c_Y(y')\label{eq:L1 cond}
\end{align}
pointwise. Then for the following statements:
\begin{enumerate}[(i)]
    \item every $\pi\in\Pi(\mu,\nu)$ is a minimizer of \eqref{eq:QOT cost};
    \item $\pi_{\mathrm{ind}}$ is a minimizer or maximizer of \eqref{eq:QOT cost};
    \item there exist functions $\varphi:\X\to\R$ and $\psi:\Y\to\R$ such that 
\begin{align}
    \tilde{c}(x,y):=\frac{1}{2}\iint \left( c(x,y,x',y')+c(x',y',x,y)\right) \d\mu(x')\,\d\nu(y')=\varphi(x)+\psi(y),\quad \pi_{\mathrm{ind}}\text{-a.e.},\label{eq:tilde}
\end{align}
\end{enumerate}
we have $(i)\implies (ii)\implies (iii)$. Moreover, if the cost function satisfies the conditions of Proposition \ref{prop:convex?}, (iii) implies that $\pi_{\rm ind}$ is a minimizer of \eqref{eq:QOT cost}, and hence (ii) and (iii) are equivalent.
    \end{proposition}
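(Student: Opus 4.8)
The plan is to prove the chain $(i)\implies(ii)\implies(iii)$ and then the partial converse. The implication $(i)\implies(ii)$ is immediate, since $\pi_{\mathrm{ind}}\in\Pi(\mu,\nu)$. For $(ii)\implies(iii)$, I would argue by a first-order variational (perturbation) argument: suppose $\pi_{\mathrm{ind}}$ is a minimizer (the maximizer case is symmetric after replacing $c$ by $-c$). For any competitor $\pi\in\Pi(\mu,\nu)$, the signed measure $\eta=\pi-\pi_{\mathrm{ind}}$ has zero marginals, and $\pi_\delta:=\pi_{\mathrm{ind}}+\delta\eta\in\Pi(\mu,\nu)$ for $\delta\in[0,1]$. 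Expanding the quadratic-form cost,
\begin{align*}
\iint c\,\d\pi_\delta\otimes\d\pi_\delta
&=\iint c\,\d\pi_{\mathrm{ind}}\otimes\d\pi_{\mathrm{ind}}
+2\delta\iint c\,\d\pi_{\mathrm{ind}}\otimes\d\eta+\delta^2\iint c\,\d\eta\otimes\d\eta,
\end{align*}
where I have used the symmetrization that $\iint c(z,z')\,\d\pi_{\mathrm{ind}}(z)\,\d\eta(z')+\iint c(z,z')\,\d\eta(z)\,\d\pi_{\mathrm{ind}}(z')=2\iint \tilde c\,\d\eta$ with $\tilde c$ as in \eqref{eq:tilde} (note $\eta$ has zero marginals, so the symmetrized kernel is what matters). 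Optimality forces the linear term $\iint\tilde c\,\d\eta\ge 0$ for every such $\eta$; applying this to both $\eta$ and $-\eta'$ obtained from pairs of couplings, one gets $\iint\tilde c\,\d(\pi-\pi')=0$ for all $\pi,\pi'\in\Pi(\mu,\nu)$. This means $\iint\tilde c\,\d\pi$ is constant over $\Pi(\mu,\nu)$, i.e.\ $\tilde c$ is QOT-irrelevant in the classic (linear) sense, which by the standard duality characterization of classic OT (the cost is coupling-independent iff it splits additively) yields $\tilde c(x,y)=\varphi(x)+\psi(y)$ for $\pi_{\mathrm{ind}}$-a.e.\ $(x,y)$. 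The integrability hypothesis \eqref{eq:L1 cond} is what makes all these integrals finite and the expansion legitimate.

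For the final claim, assume additionally that $c$ is bounded, continuous, and positive definite, so by Proposition \ref{prop:convex?} the functional $\pi\mapsto\iint c\,\d\pi\otimes\d\pi$ is convex. Under $(iii)$, I would show $\pi_{\mathrm{ind}}$ is a minimizer by checking the first-order optimality condition for a convex functional: for any $\pi\in\Pi(\mu,\nu)$ and $\eta=\pi-\pi_{\mathrm{ind}}$, convexity gives
$$\iint c\,\d\pi\otimes\d\pi\ \ge\ \iint c\,\d\pi_{\mathrm{ind}}\otimes\d\pi_{\mathrm{ind}}+2\iint\tilde c\,\d\eta,$$
and $\iint\tilde c\,\d\eta=\iint(\varphi(x)+\psi(y))\,\d\eta=0$ because $\eta$ has vanishing marginals; hence $\pi_{\mathrm{ind}}$ is a global minimizer. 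Combined with $(ii)\implies(iii)$, this gives the equivalence of $(ii)$ and $(iii)$ in the positive-definite case.

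I expect the main obstacle to be the passage ``$\iint\tilde c\,\d\pi$ constant over $\Pi(\mu,\nu)$ $\implies$ $\tilde c$ splits additively $\pi_{\mathrm{ind}}$-a.e.''. This is the place where I need a genuine result from classic OT rather than a one-line computation: one direction of the Kantorovich duality / the characterization that the OT value equals $\int\varphi\,\d\mu+\int\psi\,\d\nu$ with $\varphi(x)+\psi(y)\le\tilde c(x,y)$, together with the fact that here both the sup and inf coincide for every coupling, forcing equality $\varphi(x)+\psi(y)=\tilde c(x,y)$ on the support of every optimal plan, in particular $\pi_{\mathrm{ind}}$. Some care is needed regarding measurability of $\varphi,\psi$ and the ``$\pi_{\mathrm{ind}}$-a.e.'' qualifier (as opposed to everywhere), and regarding the symmetrization step when $c$ is not itself symmetric — which is exactly why $\tilde c$ is defined as the symmetrized kernel in \eqref{eq:tilde}. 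The convexity half is routine given Proposition \ref{prop:convex?}.
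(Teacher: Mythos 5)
Your overall architecture — perturb around $\pi_{\mathrm{ind}}$, extract a first-order condition on $\tilde{c}$, invoke a Kantorovich-type duality, and handle the converse by the tangent-line inequality for a convex functional — matches the paper's structure, and the convexity half of your argument is correct (in fact slightly cleaner than the paper's proof by contradiction).

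There is, however, a genuine gap in the $(ii)\implies(iii)$ step. You assert that from the first-order optimality of $\pi_{\mathrm{ind}}$ one can conclude $\iint\tilde{c}\,\d(\pi-\pi')=0$ for all $\pi,\pi'\in\Pi(\mu,\nu)$, i.e.\ that $\int\tilde{c}\,\d\pi$ is \emph{constant} over all couplings, ``by applying this to both $\eta$ and $-\eta'$.'' This does not follow. The constraint $\pi_\delta=\pi_{\mathrm{ind}}+\delta\eta\in\Pi(\mu,\nu)$ only permits $\delta\ge 0$ in the direction $\eta=\pi-\pi_{\mathrm{ind}}$; you cannot walk in the direction $-\eta$, since $\pi_{\mathrm{ind}}-\delta\eta$ need not be a nonnegative measure for any $\delta>0$. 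The first-order condition is therefore one-sided: it yields $\int\tilde{c}\,\d\pi\ge\int\tilde{c}\,\d\pi_{\mathrm{ind}}$ for all $\pi$, i.e.\ that $\pi_{\mathrm{ind}}$ is a \emph{minimizer} of the classic OT problem with cost $\tilde{c}$ — not that the classic OT value is coupling-independent. The characterization you then appeal to (``coupling-independent iff splits additively'') is therefore built on a false premise and cannot be used.

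The fix is in fact easier than what you are attempting. Once you have that $\pi_{\mathrm{ind}}$ \emph{minimizes} $\int\tilde{c}\,\d\pi$ over $\Pi(\mu,\nu)$, complementary slackness in Kantorovich duality (after observing, as the paper does, that $\tilde{c}$ is continuous by dominated convergence under \eqref{eq:L1 cond}) produces dual potentials $\varphi,\psi$ and a $\tilde{c}$-cyclically monotone set $\Gamma$ with $\pi_{\mathrm{ind}}(\Gamma)=1$ on which $\varphi(x)+\psi(y)=\tilde{c}(x,y)$. That is precisely statement~(iii); you never need the constancy of $\int\tilde{c}\,\d\pi$, and indeed that stronger claim is generally false under hypothesis~(ii).
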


   \begin{proof} That $(i)\implies(ii)$ is trivial, so we prove $(ii)\implies(iii)$. 
Let $\pi\in\Pi(\mu,\nu)$ be arbitrary and suppose that $\pi_{\mathrm{ind}}$ is a minimizer. For $\delta\in[0,1]$, let $\pi_\delta=\delta\pi+(1-\delta)\pi_{\mathrm{ind}}\in \Pi(\mu,\nu)$. By optimality of $\pi_{\mathrm{ind}}$ and \eqref{eq:L1 cond}, for all $\delta\in[0,1]$,
\begin{align*}
    \iint c\,\d\pi_{\mathrm{ind}}\otimes \d\pi_{\mathrm{ind}}&\leq \iint c\,\d\pi_\delta\otimes\d\pi_\delta\\
    &=(1-2\delta) \iint c\,\d\pi_{\mathrm{ind}}\otimes \d\pi_{\mathrm{ind}}+\delta\iint c\,\d(\pi_{\mathrm{ind}}\otimes \d\pi_\delta+\d\pi_\delta\otimes\pi_{\mathrm{ind}})+O(\delta^2)
    .  
\end{align*}
Therefore, we must have, for $\tilde{c}$ defined in \eqref{eq:tilde}, 
\begin{align}
     \frac{1}{2}\int \tilde  c \, \d\pi_{\rm ind} =\iint c\,\d\pi_{\mathrm{ind}}\otimes \d\pi_{\mathrm{ind}}\leq \frac{1}{2}\iint c\,\d(\pi_{\mathrm{ind}}\otimes \d\pi_\delta+\d\pi_\delta\otimes\pi_{\mathrm{ind}}) 
= \frac{1}{2}\int \tilde  c \, \d\pi_\delta 
.\label{eq:1/2}
\end{align}
Hence, $\pi_{\mathrm{ind}}$ is a minimizer of the classic OT problem with cost function  $\tilde{c}$. By our assumptions and the dominated convergence theorem, $\tilde{c}$ is continuous. Therefore, classic OT duality (\citet[Theorem 5.10]{Villani:2009}) yields a $\tilde{c}$-cyclically monotone set $\Gamma$ and dual potentials $\varphi,\psi$ such that $\pi_{\mathrm{ind}}(\Gamma)=1$ and $\varphi(x)+\psi(y)=\tilde{c}(x,y)$. This implies \eqref{eq:tilde}. The case of $\pi_{\mathrm{ind}}$ being a maximizer can be similarly established.     

For the final statement, suppose that (iii) holds but $\pi$ has a strictly smaller quadratic-form transport cost than $\pi_{\rm ind}$. Denote by $\pi_\delta=\delta\pi+(1-\delta)\pi_{\mathrm{ind}}$. By the convexity of the quadratic-form transport cost and a similar argument leading to \eqref{eq:1/2}, it holds that $\int\tilde{c}\,\d\pi_{\rm ind}>\int\tilde{c}\,\d\pi_\delta$ for $\delta>0$ small enough. This means that $\pi_{\rm ind}$ is not a minimizer of the classic OT problem with cost $\tilde{c}$, contradicting the separability assumption (iii).
    \end{proof}

    \begin{example}\label{ex...}
        Suppose that $\mu,\nu$ are discrete with the cost function given by $c(x,y,x',y')=\bone_{\{x=x',y=y'\}}/(\mu(\{x\})\nu(\{y\}))$. This is precisely the regularization term in quadratically regularized OT as discussed in Example \ref{ex:qf-reg}; see \eqref{eq:qreg}. By Jensen's inequality, the unique minimizer of this QOT problem is given by $\pi_{\rm ind}$. In this case, the cost $\tilde{c}$ in \eqref{eq:tilde} is constant one.
    \end{example}

    Condition \eqref{eq:tilde} can often be checked explicitly (which often does not hold for common cost functions) and thus offers a neat necessary condition for $\pi_{\mathrm{ind}}$ to be a minimizer and for the QOT problem to be trivial. As a sanity check, for Example \ref{ex:intro}, \eqref{eq:tilde} can be easily verified as $\tilde{c}$ is constant on $\{0,1\}^2$. The same example (as well as Example \ref{ex...}) also shows that (ii) does not imply (i) in general. We next provide two counter-examples that satisfy (iii) but not (ii). %These examples may serve as a warm-up to familiarize the reader with QOT computations.

    \begin{example}
Let $\mu,\nu$ be the standard normal distribution, and the cost function $c$ given by
$
c(x,y,x',y')=((xy)^2-1)(x'y').
$
It is easy to verify 
 $\tilde c=0$.
 If $\pi\in \Pi(\mu,\nu)$ is the joint normal distribution 
 with correlation coefficient $\rho\in [-1,1]$, 
 then 
 $\iint c \, \d\pi \otimes \d \pi = 2\rho^3.$ 
 Clearly, this transport cost is neither maximized nor minimized by the independent coupling $(\rho=0)$. 
    \end{example}

    \begin{example}
        Consider $\mu$ uniform on $\X=\{0,1\}$ and $\nu$ uniform on $\Y=\{0,1,2,3\}$, with cost function $c$ given by 
        $c(1,0,1,0)=1$, $c(0,0,0,0)=1$, $c(1,1,1,1)=-1$, $c(0,1,0,1)=-1$, 
        $c(1,2,1,2)=1$,
        $c(0,2,0,2)=1$, and zero otherwise.
        Denote the transition probabilities from $0\in\X$ to $0,1,2,3\in\Y$ respectively by $p,q,r,1-(p+q+r)$. It follows that the transition probabilities from $1\in\X$ are $1/2-p,1/2-q,1/2-r,p+q+r-1/2$. So, the total transport cost is
        $$\Big(\frac{1}{2}-p\Big)^2+p^2-\Big(\frac{1}{2}-q\Big)^2+q^2+\Big(\frac{1}{2}-r\Big)^2+r^2= 2\bigg(\Big(\frac{1}{4}-p\Big)^2-\Big(\frac{1}{4}-q\Big)^2+\Big(\frac{1}{4}-r\Big)^2\bigg)+\frac{1}{8}.$$
        Note that the set of all couplings $\Pi(\mu,\nu)$ can be parameterized by $\{(p,q,r)\in[0,1/2]^3:1/2\leq p+q+r\leq 1\}$. 
        The minimizers are then given by $(1/4,0,1/4)$ and $(1/4,1/2,1/4)$, and the maximizers are given by $(0,1/4,1/2)$ and $(1/2,1/4,0)$. None of them is the independent coupling, which is given by $(1/4,1/4,1/4)$. 
        On the other hand, straightforward calculation shows that $\tilde{c}(x,y)$ is a function of $y$ only. For instance, by symmetry of $c$,
        $$\tilde{c}(x,0)=\sum_{x'\in\X}\sum_{y'\in\Y}c(x,0,x',y')\mu(\{x'\})\nu(\{y'\})=\begin{cases}
            c(0,0,0,0)\mu(\{0\})\nu(\{0\})=1/8&\text{ if }x=0;\\
            c(1,0,1,0)\mu(\{1\})\nu(\{0\})=1/8 &\text{ if }x=1.
        \end{cases}$$
        
    \end{example}

\section{Linear-exponential distance cost functions}
\label{sec:dispersion}

\subsection{Basic facts}

All cost functions in Section \ref{sec:diamond} are symmetric in $|x-x'|$ and $|y-y'|$. 
In this section, we consider a special class of type-XX cost function (a sub-class of the one treated in Theorem \ref{prop:cocounter}) that is not symmetric in $|x-x'|$ and $|y-y'|$,
which we call the class of \emph{linear-exponential distance} cost functions, defined by 
\begin{align}
    c_\gamma(x,y,x',y'):=|y-y'|e^{-\gamma|x-x'|},\quad
    \mbox{$\gamma>0$}.\label{eq:Dc}
\end{align}
 In probabilistic terms, $\iint c_\gamma\,\d\pi\otimes\d\pi=\E\big[|Y-Y'|e^{-\gamma|X-X'|}\big]$ for $(X,Y),(X',Y')\lawis \pi$ iid. The intuition is that the cost function \eqref{eq:Dc} measures the difference between $Y$ and $Y'$ when $X$ and $X'$ are close,
and the parameter $\gamma$ controls 
how the distance between $X$ and $X'$
is discounted. The QOT problem is then formulated as 
   \begin{align}
    \begin{split}
         \mbox{to minimize}\quad&\iint |y-y'|e^{-\gamma|x-x'|}\,\d\pi(x,y)\d \pi (x',y')\\
        \mbox{subject to}\quad &\pi\in\Pi(\mu,\nu).
    \end{split}\label{eq:D}
    \end{align}

We summarize the results for this class of QOT in the following, where we observe that the minimizers and maximizers lead to very different mathematical structures. 
\begin{enumerate}[(i)]
    \item Assume $\mu \in\P_1(\R)$ and $\nu$ is an increasing (resp.~decreasing) location-scale transform of $\mu$. By checking the conditions in Theorem \ref{prop:cocounter}, the minimizers of \eqref{eq:D}
    are (a) the comonotone (resp.~antimonotone) coupling when $\mu$ is asymmetric, and 
    (b) the comonotone and antimonotone couplings  when $\mu$ is symmetric. % This follows directly from Theorem \ref{prop:cocounter}. 
\item Assume $\mu\in\P_2(\R)$ and $\nu\in\P_{2+\delta}(\R)$ for some $\delta>0$. As $\gamma \to 0^+$, the maximizer of   \eqref{eq:D} converges weakly to the diamond transport.
\item Assume $\mu\in\M(\R)$ and $\nu\in\P_1(\R)$. As $\gamma \to \infty$, the unique maximizer of the limit of \eqref{eq:D} is the independent coupling. The set of minimizers of the limit of \eqref{eq:D} is given by the set of Monge maps $\T(\mu,\nu)$.
\end{enumerate}

For the minimizers, we only obtain marginals in the same location-scale class as in the first item.
For the maximizers, we do not need to assume identical marginals, but we only have asymptotic results as in the second and the third items.
For arbitrary $\mu,\nu$ and fixed $\gamma>0$, we do not know either the   minimizer or the  maximizer for the QOT problem in general.

The first item above is rigorously presented in the following simple proposition.

\begin{proposition}\label{prop:dgamma basic}
  Suppose that  $\gamma>0$, $\mu\in\P_1(\R)$, and $\nu$ is an increasing (resp.~decreasing) location-scale transform of $\mu$. If $\mu$ is symmetric, the set of all minimizers of \eqref{eq:D} is given by the comonotone and antimonotone couplings.    If $\mu$ is asymmetric, the comonotone (resp.~antimonotone) coupling is the unique minimizer.
\end{proposition}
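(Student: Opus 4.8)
The plan is to reduce the statement to an application of Theorem \ref{prop:cocounter}. That theorem handles cost functions of the form $h(|x-x'|,|y-y'|)$ with $h$ submodular on $\R_+^2$, so the first step is to verify that $h(u,v) = v e^{-\gamma u}$ is submodular on $\R_+^2$. Since $h \in C^2$, it suffices to check that the mixed partial $h_{uv}$ is nonpositive: we compute $h_u(u,v) = -\gamma v e^{-\gamma u}$ and then $h_{uv}(u,v) = -\gamma e^{-\gamma u} \le 0$ for all $u,v \ge 0$ (indeed strictly negative), so $h$ is strictly submodular on $\R_+^2$. This identifies \eqref{eq:D} as a QOT problem of the Gromov--Wasserstein-type form covered by Theorem \ref{prop:cocounter}, with $c \in \mathcal C(\mu,\nu)$ because $c$ is nonnegative (so bounded below by $0$).

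Next I would check finiteness of the transport cost so that the uniqueness conclusions of Theorem \ref{prop:cocounter} actually apply. Since $\nu = \mu \circ \ell^{-1}$ with $\ell(x) = ax+b$, we have $|y - y'| \laweq |a||x - x'|$ under $\pi_{\rm com}$ (or $\pi_{\rm ant}$), and $\E[|X-X'|] < \infty$ as $\mu \in \P_1(\R)$; by the classic OT bound $\mathcal C_c(\mu,\nu) \le \E_{\pi_{\rm com}\otimes\pi_{\rm com}}[c] \le |a|\,\E[|X-X'|] < \infty$, so a finite-cost minimizer exists. Now I invoke Theorem \ref{prop:cocounter} directly: if $\ell$ is increasing ($a>0$), the comonotone coupling is a minimizer; if $\ell$ is decreasing ($a<0$), the antimonotone coupling is a minimizer. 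Moreover, since $h$ is \emph{strictly} submodular and the cost is finite, Theorem \ref{prop:cocounter} gives that when $\mu$ is asymmetric the minimizer is unique, and when $\mu$ is symmetric the comonotone and antimonotone couplings are the only minimizers. This is exactly the claimed dichotomy.

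The only mild subtlety — and the step I would be most careful about — is the boundary case $a = 0$, i.e., $\nu$ degenerate: then the problem is QOT-irrelevant in the $y$-coordinate and every coupling is optimal. The proposition implicitly excludes this via the phrase ``location-scale transform'' (requiring $\ell$ strictly monotone, hence $a \ne 0$), so I would simply note that $a \ne 0$ by hypothesis. With that observed, the proof is essentially a one-line deduction from Theorem \ref{prop:cocounter} once submodularity of $(u,v) \mapsto v e^{-\gamma u}$ and finiteness of the cost are recorded; there is no genuine obstacle, since all the heavy lifting (the rearrangement argument and the uniqueness analysis) is already done in Theorem \ref{prop:cocounter}.
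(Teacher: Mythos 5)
Your proposal is correct and follows essentially the same route as the paper: both reduce to Theorem \ref{prop:cocounter} by identifying the cost as $h(|x-x'|,|y-y'|)$ with $h(u,v)=ve^{-\gamma u}$, verify strict submodularity (you via the mixed partial $h_{uv}=-\gamma e^{-\gamma u}<0$, the paper by observing $h$ is a product of a strictly increasing factor in one variable and a strictly decreasing factor in the other), and check finiteness of the transport cost using $\mu\in\P_1(\R)$. The only cosmetic difference is that you bound the cost via $\pi_{\rm com}$ while the paper uses $\pi_{\rm ind}$; both are immediate.
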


\begin{proof}
This follows directly from Theorem \ref{prop:cocounter} applied with $h(s,t)=se^{-\gamma t}$. To check the conditions, note that the function $h$ is strictly submodular as the product of a strictly increasing function in $s$ and a strictly decreasing function in $t$. In addition, since $\int |x|\d\mu(x)<\infty$, the independent coupling has a finite quadratic-form transport cost, and hence so does the comonotone and antimonotone couplings. 
\end{proof}

In the remainder of this section, we analyze the limit behavior of the optimizers of \eqref{eq:D} as $\gamma\to 0^+$ and as $\gamma\to\infty$, corresponding to the second and third items above.

\subsection{The first limit case}

Observe that for any $\gamma>0$, a maximizer of \eqref{eq:D} also minimizes
$$\iint|y-y'|\Big(\frac{1-e^{-\gamma|x-x'|}}{\gamma}\Big)\,\d\pi(x,y)\,\d\pi(x',y')$$
over $\pi\in\Pi(\mu,\nu)$. 
Formally, as $\gamma\to 0^+$, we arrive at the limit optimization problem
\begin{align*}
        \begin{split}
            \mbox{to minimize}\quad&\iint|y-y'||x-x'|\,\d\pi(x,y)\,\d\pi(x',y')\\
        \mbox{subject to}\quad
        & \pi\in\Pi(\mu,\nu).
        \end{split}%\label{eq:limit problem 0}
    \end{align*}
We have shown in Theorem \ref{prop:pidia} above that if $\mu,\nu\in\P_1(\R)$, the unique minimizer is given by the diamond transport $\pi_{\mathrm{dia}}$ in Definition \ref{def:diamond}. %In this section, since the marginals are fixed, we use the shorthand notation $\pi_{\mathrm{dia}}=\pi_{\mathrm{dia}}$.   
For each $\gamma>0$, let $\pi^\gamma$ be a maximizer of \eqref{eq:D}.

\begin{proposition}\label{prop:convergence gamma to 0}
Let $\mu\in\P_2(\R)$ and $\nu\in\P_{2+\delta}(\R)$ for some $\delta>0$.  Then $\lim_{\gamma\to 0^+}\pi^\gamma=\pi_{\mathrm{dia}}$ weakly.
\end{proposition}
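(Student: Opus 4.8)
The plan is to realize the claim as a stability (essentially $\Gamma$-convergence) argument over $\Pi(\mu,\nu)$. First I would rewrite the problem: for $\gamma>0$ set $g_\gamma(x,y,x',y'):=|y-y'|\,(1-e^{-\gamma|x-x'|})/\gamma$, so that
$$\iint g_\gamma\,\d\pi\otimes\d\pi=\frac1\gamma\iint|y-y'|\,\d\nu\otimes\d\nu-\frac1\gamma\iint c_\gamma\,\d\pi\otimes\d\pi,$$
and the first term is a constant, finite because $\nu\in\P_1(\R)$. Hence $\pi^\gamma$ maximizes \eqref{eq:D} exactly when it minimizes the quadratic-form transport cost with cost function $g_\gamma$. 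The key elementary fact is that $\gamma\mapsto(1-e^{-\gamma t})/\gamma$ is nonincreasing on $(0,\infty)$ for each fixed $t\ge0$ with limit $t$ (equivalently $e^{-u}(1+u)\le1$), so $g_\gamma$ increases pointwise, as $\gamma\downarrow0$, to the rectangular cost $g_0(x,y,x',y'):=|y-y'||x-x'|$, whose unique QOT minimizer is $\pi_{\mathrm{dia}}$ by Theorem~\ref{prop:pidia}; in particular $g_\gamma\le g_0$ everywhere. Existence of a maximizer $\pi^\gamma$ follows from Proposition~\ref{prop:existence uniqueness} applied to the continuous cost $-c_\gamma\ge-|y-y'|\in L^1(\nu\otimes\nu)$, which lies in $\mathcal C(\mu,\nu)$.

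Next I would run the compactness argument. Fix any sequence $\gamma_n\to0^+$; since $\Pi(\mu,\nu)$ is weakly compact, pass to a subsequence along which $\pi^{\gamma_n}\to\pi^*$ weakly for some $\pi^*\in\Pi(\mu,\nu)$, and then $\pi^{\gamma_n}\otimes\pi^{\gamma_n}\to\pi^*\otimes\pi^*$ weakly. For the upper bound, optimality of $\pi^{\gamma_n}$ for $g_{\gamma_n}$ together with $g_{\gamma_n}\le g_0\in L^1(\pi_{\mathrm{dia}}\otimes\pi_{\mathrm{dia}})$ (Cauchy--Schwarz and $\mu,\nu\in\P_2(\R)$) give
$$\iint g_{\gamma_n}\,\d\pi^{\gamma_n}\otimes\d\pi^{\gamma_n}\le\iint g_{\gamma_n}\,\d\pi_{\mathrm{dia}}\otimes\d\pi_{\mathrm{dia}}\longrightarrow\iint g_0\,\d\pi_{\mathrm{dia}}\otimes\d\pi_{\mathrm{dia}}$$
by dominated convergence. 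For the lower bound, fix $\varepsilon>0$ and $K>0$; since $g_{\gamma_n}\ge g_\varepsilon\ge g_\varepsilon\wedge K$ for all large $n$ and $g_\varepsilon\wedge K$ is bounded and continuous, $\liminf_n\iint g_{\gamma_n}\,\d\pi^{\gamma_n}\otimes\d\pi^{\gamma_n}\ge\lim_n\iint(g_\varepsilon\wedge K)\,\d\pi^{\gamma_n}\otimes\d\pi^{\gamma_n}=\iint(g_\varepsilon\wedge K)\,\d\pi^*\otimes\d\pi^*$; letting $K\to\infty$ and then $\varepsilon\downarrow0$ and invoking monotone convergence twice yields $\liminf_n\iint g_{\gamma_n}\,\d\pi^{\gamma_n}\otimes\d\pi^{\gamma_n}\ge\iint g_0\,\d\pi^*\otimes\d\pi^*$. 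Combining the two bounds, $\pi^*$ attains $\inf_{\pi\in\Pi(\mu,\nu)}\iint g_0\,\d\pi\otimes\d\pi$, so by the uniqueness part of Theorem~\ref{prop:pidia}, $\pi^*=\pi_{\mathrm{dia}}$. As every subsequential weak limit of $\{\pi^{\gamma_n}\}$ equals $\pi_{\mathrm{dia}}$ and this holds for every sequence $\gamma_n\to0^+$ (and the weak topology on $\Pi(\mu,\nu)$ is metrizable), we conclude $\pi^\gamma\to\pi_{\mathrm{dia}}$ weakly as $\gamma\to0^+$.

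I expect the lower-bound step to be the main obstacle: the costs $g_{\gamma_n}$ grow linearly in $|y-y'|$ and are unbounded, so weak convergence of $\pi^{\gamma_n}\otimes\pi^{\gamma_n}$ does not by itself transfer to these integrands; the monotone approximation $g_{\gamma_n}\uparrow g_0$ combined with truncation is precisely what circumvents this. Alternatively, one may replace the truncation by a uniform-integrability argument in the spirit of Proposition~\ref{prop:stability}, for which the hypothesis $\nu\in\P_{2+\delta}(\R)$ furnishes $\sup_{\pi\in\Pi(\mu,\nu)}\iint g_\varepsilon^{\,1+\delta'}\,\d\pi\otimes\d\pi<\infty$ for small $\delta'>0$. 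The remaining ingredients — existence of the maximizers, the reduction to minimizing $g_\gamma$, and the $\P_2$ bookkeeping keeping all integrals finite — are routine.
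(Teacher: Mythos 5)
Your proof is correct, but it takes a genuinely different route from the paper. The paper establishes \emph{uniform} convergence of the functionals $\pi\mapsto\iint g_\gamma\,\d\pi\otimes\d\pi$ to $\pi\mapsto\iint g_0\,\d\pi\otimes\d\pi$ on $\Pi(\mu,\nu)$: it uses the elementary bound $|u+e^{-u}-1|\leq u^{1+\delta/2}$ for $u\geq 0$ to control $|g_\gamma-g_0|\leq \gamma^{\delta/2}|y-y'||x-x'|^{1+\delta/2}$, then applies Cauchy--Schwarz to absorb this into moment bounds on $\mu$ and $\nu$ that are independent of $\pi$, yielding a quantitative rate $O(\gamma^{\delta/2})$. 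From uniform convergence plus compactness of $\Pi(\mu,\nu)$ and lower semicontinuity of the limiting cost it concludes. Your argument instead exploits the \emph{pointwise monotone} convergence $g_\gamma\uparrow g_0$ (equivalently $e^{-u}(1+u)\leq 1$) and runs a $\Gamma$-convergence scheme with a truncation in the lower bound. The trade-offs: the paper's route is quantitative and compact, but the higher moment $\P_{2+\delta}$ is genuinely used to make the convergence uniform over $\Pi(\mu,\nu)$; your route is qualitative but more robust --- it actually needs only $\mu,\nu\in\P_2$ (just enough to make $g_0\in L^1(\pi_{\mathrm{dia}}\otimes\pi_{\mathrm{dia}})$ for the upper bound), and the monotone-plus-truncation device handles the unboundedness of $g_\gamma$ without any uniform-in-$\pi$ domination. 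You correctly identify the lower bound as the delicate step and your handling of it (bounded continuous truncation $g_\varepsilon\wedge K$, then $K\to\infty$ and $\varepsilon\downarrow 0$ by monotone convergence) is sound. Your observation that $\nu\in\P_{2+\delta}$ would instead support a uniform-integrability argument in the spirit of Proposition~\ref{prop:stability} is precisely the mechanism the paper uses.
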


\begin{proof}
By the definition \eqref{eq:D} and Fact \ref{rem:marginals}, for each $\gamma>0$, $\pi^\gamma$ is also a minimizer of
\begin{align}
    \iint|y-y'|\Big(\frac{1-e^{-\gamma|x-x'|}}{\gamma}\Big)\,\d\pi(x,y)\,\d\pi(x',y').\label{eq:gamma pi}
\end{align}
Without loss of generality, $\delta\in(0,1)$. Observe the elementary inequality $|u+e^{-u}-1|\leq u^{1+\delta}$ for $u\geq 0$.\footnote{To see this, observe that $u+e^{-u}-1\geq 0$. Let $f(u)=u^{1+\delta}+1-u-e^{-u}$. Clearly, $f(u)\geq 0$ for $u\geq 1$. For $u\in(0,1)$, $f'(u)=(1+\delta)u^\delta+e^{-u}-1\geq (1+\delta)u^\delta-u\geq 0$. Therefore, $f(u)\geq 0$ for $u\in[0,1]$.} It follows that uniformly in $\pi\in\Pi(\mu,\nu)$,
\begin{align*}
   & \bigg|\iint|y-y'|\Big(\frac{1-e^{-\gamma|x-x'|}}{\gamma}\Big)\,\d\pi(x,y)\,\d\pi(x',y')-\iint|y-y'||x-x'|\,\d\pi(x,y)\,\d\pi(x',y')\bigg|\\
   &\leq \gamma^{\delta/2} \iint|y-y'||x-x'|^{1+\delta/2}\,\d\pi(x,y)\,\d\pi(x',y')\\
   &\leq \gamma^{\delta/2}\bigg(\iint |y-y'|^2\d\nu(y)\,\d\nu(y')\bigg)^{1/2}\bigg(\iint |x-x'|^{2+\delta}\d\mu(x)\,\d\mu(x')\bigg)^{1/2},
\end{align*}
where the right-hand side does not depend on $\pi$. Therefore, the functional \eqref{eq:gamma pi} converges uniformly to $\iint |y-y'||x-x'|\,\d\pi(x,y)\,\d\pi(x',y')$ as $\gamma\to 0^+$. By Theorem \ref{prop:pidia}, $\iint |y-y'||x-x'|\,\d\pi(x,y)\,\d\pi(x',y')$ is uniquely minimized by $\pi_{\mathrm{dia}}$. Hence, $\pi^\gamma\to\pi_{\mathrm{dia}}$ weakly.
\end{proof}

\subsection{The second limit case: Weak OT and measures of association}
Next, we study the limit behavior as $\gamma\to\infty$. 
Consider the scaled version of \eqref{eq:D}:
$$\frac{\gamma}{2}\iint c_\gamma\,\d\pi\otimes\d\pi=\iint|y-y'|\frac{\gamma}{2} e^{-\gamma|x-x'|}\d\pi(x,y)\,\d\pi(x',y').$$
As $\gamma\to\infty$, the double integral has a formal limit of $\E[|Y-Y''|]$, where $Y,Y''$ are conditionally iid~on $X$ and $(X,Y)\lawis\pi$, which is closely connected to measures of association studied by \cite{deb2020measuring,deb2024distribution}.

The limit is verified for well-behaved couplings $\pi$ in Proposition \ref{lemma:convergence} below. Stated in probabilistic terms, the following optimization problem arises as $\gamma\to\infty$:
\begin{align}
        \begin{split}
            \mbox{maximize}\quad&\E[|Y-Y''|]\\
        \mbox{subject to}\quad &Y,\,Y''\mbox{ are conditionally independent given }X;\\
        &(X,Y),\,(X,Y'')\lawis \pi;\\
        &\pi\in\Pi(\mu,\nu).
        \end{split}\label{eq:limit problem}
    \end{align}
% $$\int_{[0,1]^2}\int_{[0,1]^2}|y-y'|\,\delta_{x-x'}\d\pi(x,y)\,\d\pi(x',y'),$$
%        where $\delta_x$ is the Dirac delta function. {remove the above and check convergence???} 
       This problem does not belong to our QOT framework but is a weak optimal transport problem. For $\pi\in\Pi(\mu,\nu)$, let $\kappa=\{\kappa_x\}_{x\in\X}$ be a regular disintegration with respect to the first marginal, and we write $\pi=\mu\otimes\kappa$. Given a cost function $c:\X\times\M(\Y)\to\R$, the \textit{weak optimal transport problem} is 
       \begin{align*}
        \mbox{to minimize}\quad&\int c(x,\kappa_x)\,\d \mu(x)\\
        \mbox{subject to}\quad &\pi\in\Pi(\mu,\nu).
    \end{align*}
    We refer to \cite{backhoff2019existence} and \cite{gozlan2017kantorovich} for  thorough treatments on this topic.

\begin{proposition}\label{lemma:convergence}
 Let  $\mu\in\M(\R)$ and $\nu\in\P_1(\R)$.   Suppose that $\pi=\mu\otimes\kappa\in\Pi(\mu,\nu)$ satisfies the following: either $\pi$ is absolutely continuous with respect to the Lebesgue measure on $\R^2$, or $\kappa_x$ is continuous in $x$ in the weak topology. Assume further that there exist constants $C,L>0$ such that for $\mu$-a.e.~$x$, $\int y\kappa_x(\d y)\leq C$ and $\mu$ has a continuous density $f\le L$ with respect to the Lebesgue measure. Then 
    $$\iint|y-y'|\frac{\gamma}{2} e^{-\gamma|x-x'|}\d\pi(x,y)\,\d\pi(x',y')\to \E[|Y-Y''|],$$
    where $Y,Y''$ are conditionally iid given $X$.
\end{proposition}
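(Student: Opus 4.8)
The plan is to treat $u\mapsto\tfrac{\gamma}{2}e^{-\gamma|u|}$ as an approximate identity in the $x$-coordinate, so that the double integral localizes onto the diagonal $\{x=x'\}$ as $\gamma\to\infty$. First I would use Fubini to integrate out the $y$-coordinates: writing $\pi=\mu\otimes\kappa$ and $G(x,x'):=\iint|y-y'|\,\kappa_x(\d y)\,\kappa_{x'}(\d y')$, the quantity in question equals $\iint\tfrac{\gamma}{2}e^{-\gamma|x-x'|}G(x,x')\,\d\mu(x)\,\d\mu(x')$, while the target $\E[|Y-Y''|]$ is precisely $\int G(x,x)\,\d\mu(x)$. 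Using the density $f\le L$ of $\mu$ and the change of variables $u=x-x'$, this rewrites as $\int\phi_\gamma(u)H(u)\,\d u$ with $\phi_\gamma(u):=\tfrac{\gamma}{2}e^{-\gamma|u|}$ and $H(u):=\int G(x,x-u)f(x)f(x-u)\,\d x$, so the claim reduces to showing that $H$ is bounded and continuous at $u=0$, whence $\int\phi_\gamma(u)H(u)\,\d u\to H(0)$.

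Boundedness of $H$ follows from the pointwise bound $G\le 2C$ --- obtained from $|y-y'|\le|y|+|y'|$ and the first-moment hypothesis $\int|y|\,\kappa_x(\d y)\le C$ for $\mu$-a.e.\ $x$ (if only the one-sided bound $\int y\,\kappa_x(\d y)\le C$ is available, one instead extracts uniform integrability of $|y-y'|$ under $\kappa_x\otimes\kappa_{x'}$ from $\nu\in\mathcal{P}_1(\R)$ and $f\le L$ by truncation) --- together with $f\le L$ and $\int f\,\d x=1$. For continuity of $H$ at $0$ I would treat the two hypotheses separately. If $x\mapsto\kappa_x$ is weakly continuous, the uniform first-moment bound upgrades $\kappa_{x-u}\to\kappa_x$ (weakly) to $G(x,x-u)\to G(x,x)$, and since $G\le 2C$ and $f\le L$ with $\int f\,\d x=1$, dominated convergence gives $H(u)\to H(0)$. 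If instead $\pi$ is absolutely continuous on $\R^2$ with density $p$, I would write $G(x,x-u)f(x-u)=\int\bigl(\int|y-y'|\,\kappa_x(\d y)\bigr)p(x-u,y')\,\d y'$, use that $\int|y-y'|\,\kappa_x(\d y)\le C+|y'|$ uniformly in $x$, and bound $|H(u)-H(0)|$ by $L\int(C+|y'|)\bigl(\int|p(x-u,y')-p(x,y')|\,\d x\bigr)\,\d y'$, which tends to $0$ as $u\to0$ by $L^1$-continuity of translation in the $x$-variable of $p$ and dominated convergence (the dominant being $2(C+|y'|)$ times the density of $\nu$, integrable because $\nu\in\mathcal{P}_1(\R)$).

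With $H$ bounded and continuous at $0$, the standard split of $\int\phi_\gamma(u)H(u)\,\d u$ over $|u|<\delta$ and $|u|\ge\delta$, letting $\gamma\to\infty$ and then $\delta\to0$, yields the convergence of the double integral to $\E[|Y-Y''|]$. A bookkeeping point here is that $\int\tfrac{\gamma}{2}e^{-\gamma|x-x'|}\,\d\mu(x')=(\phi_\gamma*f)(x)$ tends to $f(x)$, so one should keep track of this factor by working throughout with $\d\mu(x')=f(x')\,\d x'$ when setting up the localization.

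The step I expect to be the main obstacle is the continuity of $H$ at $0$ in the absolutely continuous case: there $\kappa_x$ need not vary continuously in $x$ in any pointwise sense, so one cannot pass to the limit inside the $x$-integral directly and must route the argument through the $L^1$ translation-continuity of $p$, taking care that the linear growth in $y'$ of $\int|y-y'|\,\kappa_x(\d y)$ is absorbed by the $\mathcal{P}_1$-moment of $\nu$. The weakly-continuous case is comparatively soft once the first-moment bound is in hand, and the final approximate-identity limit is routine.
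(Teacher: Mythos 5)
Your route is genuinely different from the paper's: you integrate out \emph{both} $y$-coordinates first to get $G(x,x')=\iint|y-y'|\,\kappa_x(\d y)\,\kappa_{x'}(\d y')$, change variables to $u=x-x'$, and reduce to an approximate-identity limit $\int\phi_\gamma(u)H(u)\,\d u\to H(0)$ with $H(u)=\int G(x,x-u)f(x)f(x-u)\,\d x$; the paper's proof instead integrates out only one $y$-coordinate, keeps $g(x',y)=\int|y-y'|\,\kappa_{x'}(\d y')$ as a bivariate function, establishes a $\pi$-a.e.\ pointwise limit for the inner $x'$-integral via Lebesgue differentiation (or weak continuity of $\kappa_x$), and then invokes dominated convergence. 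Your reduction is cleaner in spirit, and the two cases (absolutely continuous versus weakly continuous kernel) are handled with analogous but differently packaged facts ($L^1$ translation continuity versus Lebesgue differentiation; dominated convergence of $G$ versus of the inner integral).

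However, there is a substantive gap that your ``bookkeeping point'' flags but does not resolve. You correctly identify the stated target as $\E[|Y-Y''|]=\int G(x,x)\,\d\mu(x)=\int G(x,x)f(x)\,\d x$, but the limit your argument actually produces is $H(0)=\int G(x,x)f(x)^2\,\d x$, which carries an \emph{extra} factor of $f$. These agree only when $f\equiv 1$ on the support (e.g.\ $\mu=\mathrm{U}(0,1)$). This is not a slip in your change of variables --- the quantity $\iint|y-y'|\tfrac{\gamma}{2}e^{-\gamma|x-x'|}\,\d\pi\,\d\pi$ genuinely concentrates to $\int G(x,x)f(x)^2\,\d x$, as one sees immediately from the independent-coupling test case $\pi=\mu\otimes\nu$ with, say, $\mu=\nu=\mathrm{N}(0,1)$: the limit is $\E_\nu[|Y-Y'|]\cdot\int f^2=\tfrac{2}{\sqrt\pi}\cdot\tfrac{1}{2\sqrt\pi}=\tfrac1\pi$, whereas $\E[|Y-Y''|]=\tfrac{2}{\sqrt\pi}$. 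So the proposition's conclusion should read $\E[|Y-Y''|\,f(X)]$ (equivalently $\int G(x,x)f(x)^2\,\d x$) rather than $\E[|Y-Y''|]$; you reproduce the intended computation correctly but should not have silently equated $H(0)$ with the stated target. You are in good company here: the paper's own final step asserts $\int g(x,y)f(x)\,\d\pi(x,y)=\E[|Y-Y''|]$, which contains the identical discrepancy, so this is most likely a typo in the statement that both you and the paper inherit.
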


\begin{proof}
Define $g(x,y):=\int |y-y'|\kappa_{x}(\d y')$. We first claim that under our assumptions,
\begin{align}
    \lim_{\gamma\to \infty}\int\frac{\gamma}{2}e^{-\gamma|u|}g(x+u,y)f(x+u)\,\d u= g(x,y)f(x) \quad\pi\text{-a.e.}\label{eq:limit}
\end{align} 
Indeed, if $\kappa_x$ is continuous in $x$, $g(x,y)$ would be jointly continuous in $(x,y)$, and hence \eqref{eq:limit} is valid. On the other hand, by Lebesgue's differentiation theorem, \eqref{eq:limit} holds for $x$-a.e.~for fixed $y$, and hence for Lebesgue-a.e.~$(x,y)$. If $\pi$ is absolutely continuous, \eqref{eq:limit} also holds for $\pi$-a.e.~$(x,y)$.    

Using $\p(Y'\le x \mid X')=\p( Y'\le x\mid X,X')$ almost surely~for all $x\in \R$, we have by first conditioning on $(X,Y)$ and then conditioning on $X'$ that
\begin{align}
    \begin{split}
        \E\Big[\frac{\gamma}{2}|Y-Y'|e^{-\gamma|X-X'|}\Big]&=\int \frac{\gamma}{2}\E\big[|y-Y'|e^{-\gamma|x-X'|}\big]\d\pi(x,y)\\
    &=\iint \frac{\gamma}{2}g(x',y)e^{-\gamma|x-x'|}f(x')\,\d x'\d\pi(x,y)
    \end{split}\label{eq:e2}
\end{align}
Next, we apply \eqref{eq:limit} and the dominated convergence theorem to show that as $\gamma\to\infty$,
\begin{align}
    \iint\frac{\gamma}{2}g(x',y)e^{-\gamma|x-x'|}f(x')\,\d x'\d\pi(x,y)\to \int g(x,y)f(x)\,\d\pi(x,y).\label{eq:e5}
\end{align}
To see this, it remains to verify
\begin{align}
    \int\sup_{\gamma\geq 0}\int\frac{\gamma}{2}g(x',y)e^{-\gamma|x-x'|}f(x')\,\d x'\d\pi(x,y)<\infty.\label{eq:e3}
\end{align}
By our assumption and the triangle inequality, $g(x,y)\leq |y|+C$ and $f(x')\leq L$. 
It follows that uniformly in $\gamma\geq 0$,
\begin{align*}
    \int\frac{\gamma}{2}g(x',y)e^{-\gamma|x-x'|}f(x')\,\d x'&\leq \frac{\gamma L}{2}\int (|y|+C)e^{-\gamma|x-x'|}\d x' \leq L(C+|y|).
\end{align*}
Therefore,
$$\int\sup_{\gamma\geq 0}\int\frac{\gamma}{2}g(x',y)e^{-\gamma|x-x'|}f(x')\,\d x'\d\pi(x,y)\leq \int L(C+|y|)\,\d\pi(x,y)<\infty.$$
This proves \eqref{eq:e3}. 
The proof is then complete, by \eqref{eq:e2}, \eqref{eq:e5}, and the observation that
$$\int g(x,y)f(x)\,\d\pi(x,y)=\E[|Y-Y''|],$$
where $Y,Y''$ are conditionally iid given $X$.
\end{proof}

In Proposition \ref{prop:gamma to infty} below, we explicitly solve \eqref{eq:limit problem}. The same problem is studied as a special case of Proposition 1.1 of \cite{deb2020measuring} as a measure of association of $(X,Y)$, given by 
$$
\eta (X,Y)= 1-\frac{\E[|Y-Y''|]}{\E[|Y-Y'|]} \in [0,1],
$$
where $Y$ and $Y'$ are iid and $Y$ and $Y''$ are conditionally iid given $X$. 
Under some additional assumptions on $(X,Y)$, \cite{deb2020measuring} showed that if $X$ and $Y$ are non-degenerate, then $\eta(X,Y)=0$ if and only if $X$ and $Y$ are independent,
and $\eta(X,Y)=1$ if and only if $Y$ is a measurable function of $X$. 
Our next result, with a self-contained proof, implies the above conclusion on $\eta$. It  assumes only the first moment condition  on $Y$, much weaker than the conditions in \cite{deb2020measuring}.
    
\begin{proposition}\label{prop:gamma to infty}
Suppose that $\mu\in\M(\R)$ and $\nu\in\P_1(\R)$.     The unique maximizer $\pi$ to \eqref{eq:limit problem} is given by the independent coupling $\pi_{\mathrm{ind}}$. The set of minimizers of \eqref{eq:limit problem} is given by the set of Monge maps $\T(\mu,\nu)$. 
\end{proposition}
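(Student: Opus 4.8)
The plan is to analyze the functional $J(\pi) := \E[|Y-Y''|]$, where $(X,Y),(X,Y'')$ are conditionally i.i.d.\ given $X$, by disintegrating $\pi = \mu\otimes\kappa$ with respect to its first marginal and writing
\begin{align*}
J(\pi) = \int\!\!\iint |y-y''|\,\kappa_x(\d y)\,\kappa_x(\d y'')\,\d\mu(x) = \int G(\kappa_x)\,\d\mu(x),
\end{align*}
where $G(\rho) := \iint |y-y''|\,\d\rho(y)\,\d\rho(y'') = 2\,\mathrm{GD}(\rho)$ is twice the Gini deviation of a measure $\rho\in\P_1(\R)$, in the notation of Example \ref{ex:gini}. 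The constraint $\pi\in\Pi(\mu,\nu)$ forces the barycenter condition $\int \kappa_x\,\d\mu(x) = \nu$. The key structural fact is that $\rho\mapsto G(\rho)$ is \emph{concave} in $\rho$: indeed, $G(\rho) = \iint |y-y''|\,\d\rho\otimes\d\rho$ is a quadratic form whose associated kernel $(y,y'')\mapsto -|y-y''|$ is conditionally positive definite (a classical fact; cf.\ Schoenberg's theory as used around Theorem \ref{thm:diamond}), so $-G$ is convex and $G$ is concave on $\P_1(\R)$.

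For the \textbf{maximizer}, concavity of $G$ combined with Jensen's inequality for the averaging $\nu = \int\kappa_x\,\d\mu(x)$ gives $J(\pi) = \int G(\kappa_x)\,\d\mu(x) \le G\!\left(\int \kappa_x\,\d\mu(x)\right) = G(\nu)$. Equality in Jensen for a strictly concave functional forces $\kappa_x = \nu$ for $\mu$-a.e.\ $x$, i.e.\ $\pi = \mu\otimes\nu = \pi_{\mathrm{ind}}$; and $G(\nu) = \E[|Y-Y'|]$ for $Y,Y'$ i.i.d.\ $\sim\nu$, which is exactly the value $J(\pi_{\mathrm{ind}})$. The remaining point is \emph{strict} concavity of $G$ on the relevant set: since $(y,y'')\mapsto -|y-y''|$ is strictly conditionally positive definite (the associated quadratic form vanishes on a signed measure of total mass zero only if that measure is zero), $G$ is strictly concave, so the maximizer is unique. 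If $\nu$ is degenerate this is vacuous but still correct ($G\equiv 0$ and every coupling is optimal and equals $\pi_{\mathrm{ind}}$).

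For the \textbf{minimizer}, note $G(\rho)\ge 0$ with equality iff $\rho$ is a Dirac mass. Hence $J(\pi)\ge 0$, and $J(\pi) = 0$ iff $\kappa_x$ is a Dirac mass $\delta_{T(x)}$ for $\mu$-a.e.\ $x$, for some measurable map $T$; the marginal constraint then reads $\mu\circ T^{-1} = \nu$, i.e.\ $T\in\T(\mu,\nu)$. Conversely any $T\in\T(\mu,\nu)$ induces $\pi$ with $J(\pi) = 0$. The only subtlety is that $\T(\mu,\nu)$ may be empty (e.g.\ $\mu$ a Dirac mass, $\nu$ not); but when $\T(\mu,\nu)=\varnothing$ the infimum need not be attained, and one should state the conclusion as: the set of minimizers \emph{equals} $\{\,\pi_T : T\in\T(\mu,\nu)\,\}$, which is automatically the correct (possibly empty) answer, with the understanding that existence of a minimizer is part of the claim exactly when $\T(\mu,\nu)\neq\varnothing$. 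I expect the main obstacle to be pinning down the strict conditional positive definiteness of $-|\cdot-\cdot|$ cleanly enough to conclude \emph{uniqueness} of the maximizer without moment pathologies; one route is to invoke the representation $|y-y''| = \int_{\R}\big(\mathbf 1_{\{t< y\}} - \mathbf 1_{\{t<y''\}}\big)^2\,\d t$ (valid after the usual manipulation $|a-b| = \int_{\R}|\mathbf 1_{\{t<a\}}-\mathbf 1_{\{t<b\}}|\,\d t$ and noting the integrand is $\{0,1\}$-valued), so that $G(\nu) - J(\pi) = \int_{\R}\E\big[\var(\mathbf 1_{\{t<Y\}}\mid X)\big]\,\d t \ge 0$, with equality iff $\mathbf 1_{\{t<Y\}}$ is $\sigma(X)$-measurable for a.e.\ $t$, which (varying $t$) forces $Y$ to be $\sigma(X)$-measurable for the minimizer side and $Y\ind X$ for the maximizer side. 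This Fubini identity simultaneously delivers both halves of the proposition and sidesteps any appeal to abstract concavity.
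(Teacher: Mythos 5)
Your approach is correct and takes a genuinely different route from the paper. The paper's proof writes $Y = f(X,U)$ via the conditional quantile function (with $U\lawis\mathrm{U}$ independent of $X$), computes $\E[|Y-Y'|]=2\int_0^1 g(t)(2t-1)\,\d t$ with $g(t)=\E[f(X,t)]$, and then invokes $g(U)\le_{\rm cx} f(X,U)\laweq Y$ together with an external strict-monotonicity-in-convex-order result (\cite{furman2017gini}, Theorem 4.5) to identify the maximizer. You instead disintegrate $\pi=\mu\otimes\kappa$ and apply Jensen to the concave functional $G(\rho)=\iint|y-y''|\,\d\rho\otimes\d\rho$, with strict concavity flowing from strict conditional negative definiteness of $|y-y''|$. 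Your route is self-contained (no external convex-order reference), it more closely parallels the quadratic-form/positive-definiteness structure used in Section~\ref{sec:diamond}, and the minimizer half becomes a one-liner from $G\ge 0$ with equality only at Diracs. The Jensen step needs the unstated but elementary fact that $B(\sigma,\sigma')=\iint|y-y''|\,\d\sigma\,\d\sigma'$ is genuinely bilinear (interchange of $\int\,\d\mu(x)$ with $B$) under $\nu\in\P_1$, which is a routine Fubini check.

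There is, however, a concrete error in your closing Fubini identity. From $|a-b|=\int_\R(\bone_{\{t<a\}}-\bone_{\{t<b\}})^2\,\d t$ and conditional independence of $Y,Y''$ given $X$ one obtains
\begin{align*}
J(\pi)=\E[|Y-Y''|]=2\int_\R\E\big[\var(\bone_{\{t<Y\}}\mid X)\big]\,\d t,
\end{align*}
not $G(\nu)-J(\pi)$ as you wrote. The quantity you actually need for the maximizer half is the complementary term in the law of total variance: since $\var(\bone_{\{t<Y\}})=\E[\var(\bone_{\{t<Y\}}\mid X)]+\var(\E[\bone_{\{t<Y\}}\mid X])$ and $G(\nu)=2\int_\R\var(\bone_{\{t<Y\}})\,\d t$, one gets
\begin{align*}
G(\nu)-J(\pi)=2\int_\R\var\big(\p(Y>t\mid X)\big)\,\d t\ge 0,
\end{align*}
with equality iff $\p(Y>t\mid X)$ is a.s.\ constant for a.e.\ $t$, i.e.\ $Y\ind X$. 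As stated, your single formula cannot simultaneously deliver both equality conditions (your final sentence asserting it does is internally inconsistent: the same equality case cannot force both $\sigma(X)$-measurability and independence). The corrected pair of displays, one for $J(\pi)$ and one for $G(\nu)-J(\pi)$, does deliver both halves cleanly, and would indeed be an attractive alternative to your Jensen argument and to the paper's convex-order argument.
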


\begin{proof}  
We first analyze the maximizer of \eqref{eq:limit problem}. Since $\nu\in\P_1(\R)$, the independent coupling yields a finite transport cost.
Let $X,U,V$ be independent, with
$X\lawis \mu$ and $U,V\lawis\mathrm{U}$.  
Suppose that $Y$ and $Y'$
are conditionally iid~on $X$, and $Y\lawis \nu$.
For any coupling $(X,Y)$, we note that $(Y,Y',X)\laweq (f(X,U),f(X,V),X)$ where $u\mapsto f(x,u)$ is (a regular version of) the conditional quantile function of $Y$  on $X=x$. 
Write $g(t)=\E[f(X,t)]$ for $t\in [0,1]$, which implies
$g(U)=\E[f(X,U)\mid U]$. 
It follows that \begin{align*}
\E[|Y-Y'|] &=\E[f(X,U)\vee f(X,V)] - \E[f(X,U)\wedge f(X,V)] 
\\&=  \E[f(X,U\vee V)] - \E[f(X,U\wedge V)] 
\\&=\int_0^1 g(t)  \d t^2 - \int_0^1 g(t)\,\d (2t- t^2)
\\&=  2\int_0^1 g(t) (2t -1)  \d t . 
% = 4 \int_0^1  t g(t)  \d t - 1.
 \end{align*} 
For two random variables,  we write the convex order relation
$Z\le_{\rm cx} W$  if 
$\E[h(Z)]\le \E[h(W)]$ for all  convex functions $h$ such that the two expectations are well-defined.     
Note that $g(U) \le_{\rm cx} f(X,U) \laweq Y$. 
Moreover, $g(U)\laweq Y$ when $X$ and $Y$ are independent. 
By \citet[Theorem 4.5]{furman2017gini}, the functional $X\mapsto \int _0^1 (2t -1) Q_\mu(t)\,\d t$ is strictly increasing in convex order, where $X\lawis \mu$.
Therefore, 
$\E[|Y-Y'|] $ is maximized if and only if $g(U)\laweq Y$.
Therefore, for the maximizer $(X,Y)$,
$\E[f(X,U)\mid U]=g(U)=f(X,U)$ holds true, implying that $X$ and $f(X,U)$ are independent as $X$ and $U$ are independent. This shows that the independent coupling $\pi_{\mathrm{ind}}$ is the unique maximizer of $\E[|Y-Y'|]$.

Next, we derive the set of minimizers. If the coupling $(X,Y)$ is induced by a Monge map $Y=f(X)$ for some measurable $f$, the objective is $\E[|Y-Y'|]=\E[|f(X)-f(X)|]=0$. Conversely, write $\pi=\mu\otimes\kappa$.
If $ \E[|Y-Y'|]=0$, then $ \E[|Y-Y'|\mid X]=0$ almost surely, 
implying $\mu  (\{x:\kappa_x\text{ is  degenerate}\} )=1$,  proving that $(X,Y)$ is   Monge. 
\end{proof}

The upshot of the above results is that, although Proposition \ref{prop: no ind} implies that the independent coupling is never a minimizer for \eqref{eq:D} with $\gamma>0$, we expect that the maximizers $\pi^\gamma$ behave like the independent coupling as $\gamma\to\infty$. 
However, we do not have a proof to guarantee that the maximizer $\pi^\gamma$ of \eqref{eq:D} converges to  $\pi_{\rm ind}$. 
%For example, in the limit $\gamma\to 0^+$, the transport cost converges to $\E[|Y-Y'|]$, making the QOT problem trivial --- every transport plan has the same transport cost. 

% \begin{remark}
% cite Deb and Chatterjee here? Does Deb's papers cover this measure already?

% Proposition  \ref{prop:gamma to infty} gives rise to a simple measure of association by 
% $$
% \eta (X,Y)= 1-\frac{\E[|Y-Y''|]}{\E[|Y-Y'|]} \in [0,1],
% $$
% where $Y$ and $Y'$ are iid and $Y$ and $Y''$ are conditionally iid given $X$. 
% By Proposition  \ref{prop:gamma to infty},  if $X$ and $Y$ are non-degenerate, then 
% $\eta=0$ if and only if $X$ and $Y$ are independent,
% and $\eta=1$ if and only if $Y$ is a measurable function of $X$. 
% Compared to the measure of \cite{deb2020measuring}, this measure of association only requires the existence of the first moment instead of the second moment. 
% On the other hand, Chatterjee's coefficient (\cite{chatterjee2021new}) does not require any moments.
% \end{remark}

\subsection{Numerical approximations for the optimizers}\label{sec:simulations}

We next present some numerical approximation for QOT optimizers. The goal here is to understand how the QOT minimizers and maximizers  for the linear-exponential cost function behave when we cannot compute them explicitly (recall that, for minimizers, we need $\mu,\nu$ to be in the same location-scale family, and for maximizers,  we only have some limiting results). %Equivalently, we try to numerically compute minimizers for the cost function $-c_\gamma$.

The QOT problems with cost functions $  c_\gamma$ and $-c_\gamma$ are not convex, indicating that exact solutions may be difficult to compute numerically,   and hence we apply heuristic local search algorithms to solve for an optimal transport map. 
More precisely, we apply the metaheuristic improvement method with pair exchange neighborhood (see Section 3.2 of \cite{cela2013quadratic}, or Section 8.2.3 of \cite{burkard2012assignment}) to a discretized version of the maximization problem with cost function \eqref{eq:Dc}, which is a quadratic assignment problem. The discretization procedure is justified by Proposition \ref{prop:stability}. The resulting matching may approximate the minimizers and maximizers of 
$\iint c_\gamma\,\d\pi\otimes\d\pi$. 
% $$\argmin_{\pi\in\Pi(\mu,\nu)}\iint c_\gamma\,\d\pi\otimes\d\pi=\argmin_{\pi\in\Pi(\mu,\nu}\iint |y-y'|e^{-\gamma|x-x'|}\d\pi(x,y)\,\d\pi(x',y').$$
% and
%  $$\argmax_{\pi\in\Pi(\mu,\nu)}\iint c_\gamma\,\d\pi\otimes\d\pi=\argmax_{\pi\in\Pi(\mu,\nu}\iint |y-y'|e^{-\gamma|x-x'|}\d\pi(x,y)\,\d\pi(x',y').$$

  The minimizer  of $\iint c_\gamma\,\d\pi\otimes\d\pi$  is known to be the comonotone coupling (Proposition \ref{prop:dgamma basic}) when $\mu,\nu$ are in the same location-scale family, and hence we choose uniform  and normal marginals, that is, $\mu=\mathrm{U}(0,1)$
and $\nu=\mathrm{N}(0,1)$.
In Figure \ref{fig:lambda}, we report 
the approximate minimizers (normalized by their ranks) with 
 parameters $\gamma\in\{0.3,\,2,\,6\}$, obtained from the numerical scheme above.
Each of them has an interesting   ``\reflectbox{$\lambda$}-shaped''  support,   clearly different from the comonotone coupling, or any other explicit coupling that we studied.

\begin{figure}
    \centering
    \begin{subfigure}{0.315\textwidth}
    \includegraphics[width=1\linewidth]{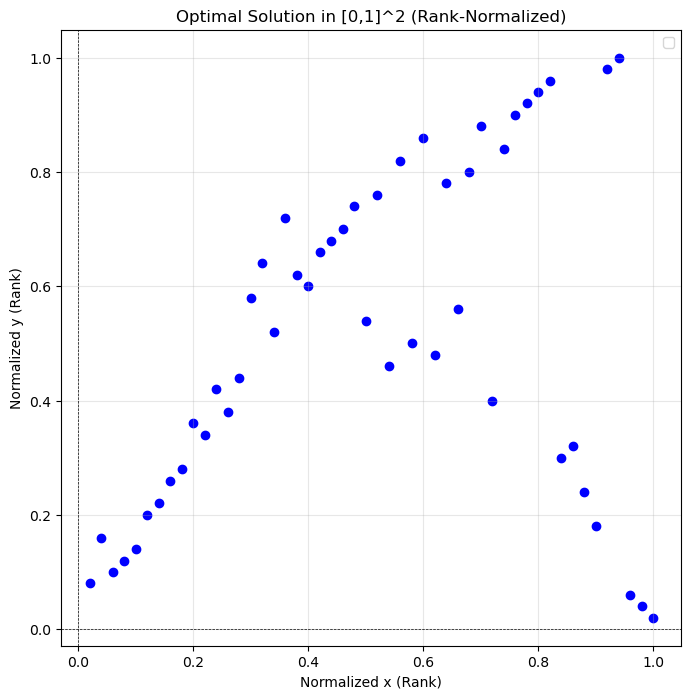}
    \caption{$\gamma=0.3$}
\end{subfigure}
\begin{subfigure}{0.315\textwidth}
\centering \includegraphics[width=1\linewidth]{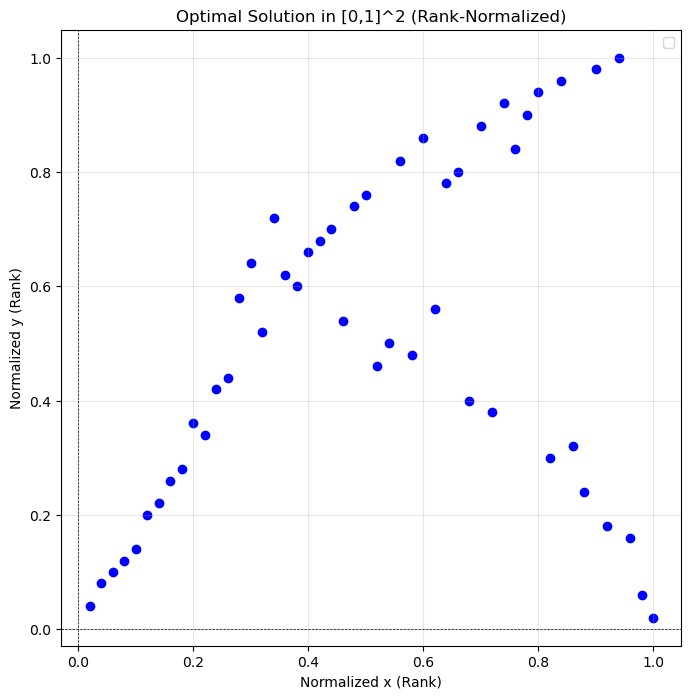}
\caption{$\gamma=2$}
\end{subfigure}
\begin{subfigure}{0.315\textwidth}
\centering \includegraphics[width=1\linewidth]{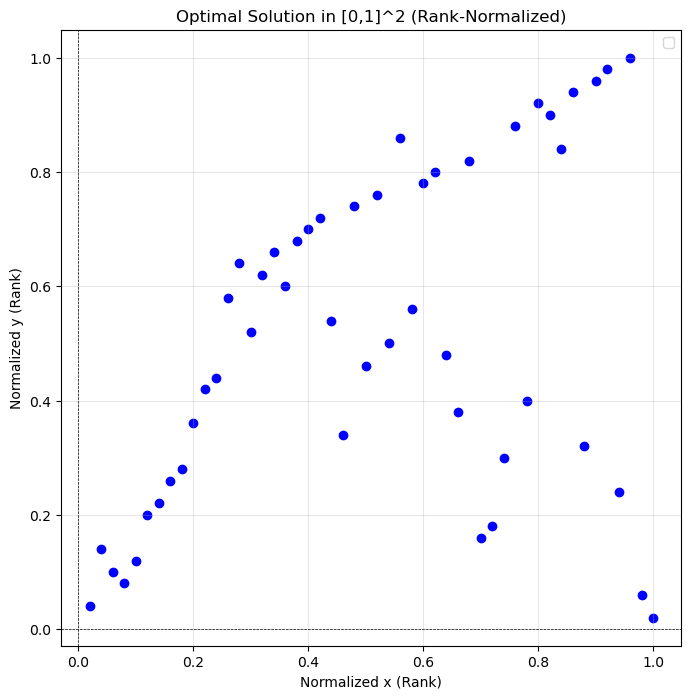}
\caption{$\gamma=6$}
\end{subfigure}
    \caption{Optimal coupling with quadratic-form cost function $c_\gamma(x,y,x',y')=|y-y'|e^{-\gamma|x-x'|}$ for $\gamma\in\{0.3,2,6\}$, where $\mu$ 
    and $\nu$ are the empirical measures of $50$ simulated points from $\mathrm{U}(0,1)$ and   from $\mathrm{N}(0,1)$, respectively. The optimal coupling is reported in   ranks. The support appears close to a \reflectbox{$\lambda$}-shape. The numerical procedure is based on the heuristic improvement method with pair exchange neighborhood with $500$ iterations, initiated from the comonotone transport $\pi_{\mathrm{com}}$.}
    \label{fig:lambda}
\end{figure} 

For the maximizers of $\iint c_\gamma\,\d\pi\otimes\d\pi$, we do not know explicit forms even for the case $\mu=\nu=\mathrm{U}(0,1)$, so we consider these marginals in the numerical scheme.
In Figure \ref{fig:dispersion maximizer}, we report  the 
 approximate maximizers with 
 parameters $\gamma\in\{0.3,\,2,\,6\}$. Maximizers for smaller $\gamma$ appear closer to $\pi_{\mathrm{dia}}$ (thus reassuring Proposition \ref{prop:convergence gamma to 0}), and for larger $\gamma$ appear closer to $\pi_{\mathrm{ind}}$  (thus reassuring Proposition \ref{prop:gamma to infty}). The support of the optimizer seems to be contained in a certain symmetric convex shape $E_\gamma$ in $[0,1]^2$. As $\gamma$ increases, the support expands, and less mass is concentrated near the boundary of $E_\gamma$ but more mass in the interior of $E_\gamma$. 
 
\begin{figure}[t]
\centering
\begin{subfigure}{0.315\textwidth}
    \includegraphics[width=1\linewidth]{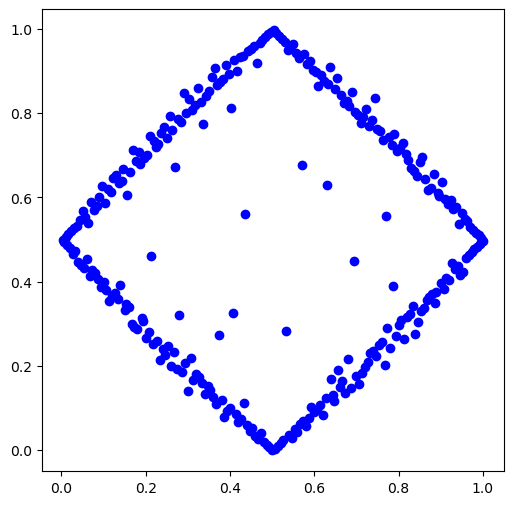}
    \caption{$\gamma=0.3$}
\end{subfigure}
\begin{subfigure}{0.315\textwidth}
\centering \includegraphics[width=1\linewidth]{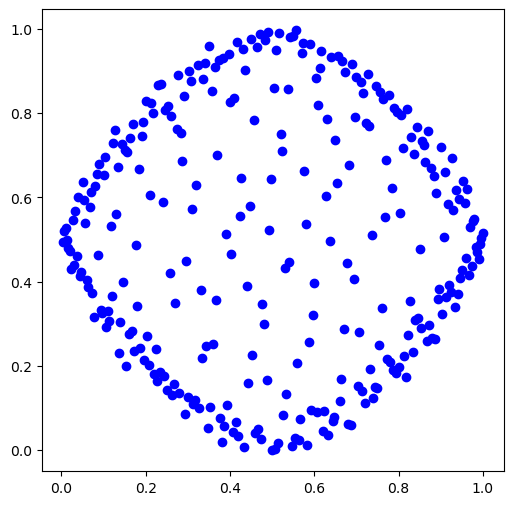}
\caption{$\gamma=2$}
\end{subfigure}
\begin{subfigure}{0.315\textwidth}
\centering \includegraphics[width=1\linewidth]{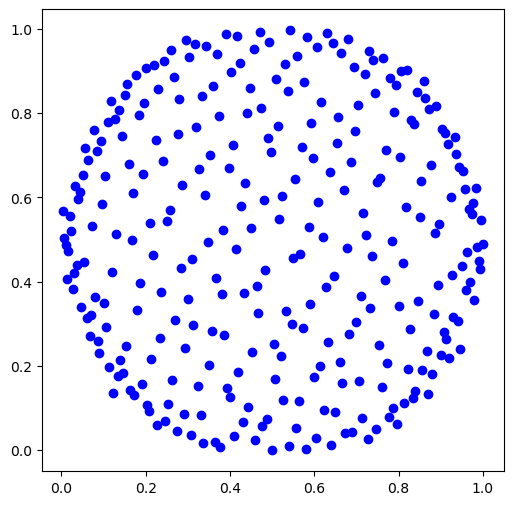}
\caption{$\gamma=6$}
\end{subfigure}
\caption{Plots of the QOT maximizers with cost function $c_\gamma(x,y,x',y')=|y-y'|e^{-\gamma|x-x'|}$ for $\mu,\nu$ both uniformly distributed on 300 equally spaced points on $[0,1]$ with various parameters $\gamma$. The numerical procedure is performed using the heuristic improvement method with pair exchange neighborhood with $10^4$ iterations, initiated from the (discretized version of the) diamond transport $\pi_{\mathrm{dia}}$. }
\label{fig:dispersion maximizer} 
\end{figure}

\section{Extended discussions on unsolved questions}
\label{sec:EC-disc}

 We provide details for the promising directions and unsolved problems outlined in Section \ref{sec:concl}.

\begin{enumerate}[(i)]

\item Brenier's theorem in classic OT states that if $\mu,\nu\in\M(\R^d)$, $\mu$ is absolutely continuous, and the cost is given by the squared Euclidean distance $\|x-y\|^2$, then the (unique) transport plan is Monge and induced by the gradient of a convex function (\cite{brenier1987decomposition}; see also \citet[Theorem 1.17]{Santambrogio:2015} for a more general version). The analogous question in the QOT context remains very challenging. Recent studies on the (2,2)-GW cost \eqref{eq:GWpq} suggest the existence of optimal 2-maps (transport plans supported on the union of the graphs of two maps, such as $\pi_{\mathrm{dia}}$, $\pi^\lambda_{\mathrm{x}}$, and $\pi_{\rm v}$ with $x,y$ flipped) and the non-existence of Monge minimizers under certain assumptions including absolute continuity of $\mu$ (\citet[Theorem 3.6]{dumont2024existence}). Our closed-form results (Propositions \ref{ex:quadratic cost} and \ref{ex:quadratic cost-f}, and Theorems \ref{def:wedge}, \ref{thm:diamond}, and \ref{thm:p-cost}) provide evidence that the existence of optimal 2-maps might be a universal phenomenon for many QOT problems (but not all of them, in view of Figure \ref{fig:dispersion maximizer}). This phenomenon is also present in many other extensions or special cases of classic OT (mostly on the real line) such as the martingale optimal transport (\citet[Corollary 1.6]{beiglbock2016problem}), the directional optimal transport (\citet[Corollary 2.9]{nutz2022directional}), and OT with concave costs (\citet[Theorem 6.4]{Gangbo:1996}).

\item Recent works on explicit solutions of QAP (\cite{burkard2012assignment,cela2018new}) may hint at certain cost structures leading to further closed-form minimizers of QOT. On the other hand, the analysis of the cost function $c(x,y,x',y')=|(x-x')(y-y')|^q,~q\in[1,2]$ seems reminiscent in the QAP literature, and hence our results in Section \ref{sec:diamond} may potentially inspire new explicitly solvable cases in QAP. In particular, we expect that solving for \eqref{eq:motivation1} above (equivalent to the case $q=2$) in the Monge setting may leverage on tools in the QAP literature, where the minimizer is a discrete approximation to the diamond transport in a suitable sense (since the transport cost is continuous in the weak topology).

\item We anticipate that our work will inspire various applications of QOT to classic OT. For example, building on the convex QOT cost functions introduced in Section \ref{sec:convex}, a theory of (convex) quadratic-form regularized OT can be developed. Unlike the quadratically regularized OT discussed in Example \ref{ex:qf-reg}, the quadratic-form approach offers a rich variety of parameterized regularizer classes (Example \ref{ex:dia}) and does not require the solution to be absolutely continuous with respect to the independent coupling.
We expect that the quadratic-form regularized OT generally also leads to sparse (or even singular) couplings.

    \item There are many simple cost functions for which we do not have an explicit solution to the corresponding QOT problem. We list a few examples below.
    \begin{enumerate}[(a)]
        \item 
  
We wonder whether Theorem \ref{thm:diamond} extends to marginal distributions that are not symmetric. We conjecture that some ``diamond-type'' coupling is the minimizer of the corresponding QOT problem. Such a coupling is a combination of four comonotone and antitone pieces, and it is numerically supported by Figure \ref{fig:asymmetric diamond}.  %This is supported by Figure \ref{fig:asymmetric diamond}.
 
 % \begin{conjecture}
 %     Assume the same setting as Theorem \ref{thm:diamond}, except that $\mu,\nu$ may not be symmetric, there exist constants $m_\mu\in\overline{\conv}\supp\mu$ and $m_\nu\in\overline{\conv}\supp\nu$ (possibly depending on the cost function $c$) such that the unique minimizer $\pi_*$ satisfies that $\pi_*|_{(-\infty,m_\mu]\times (-\infty,m_\nu]}$ and $\pi_*|_{[m_\mu,\infty)\times [m_\nu,\infty)}$ are antimonotone and $\pi_*|_{[m_\mu,\infty)\times (-\infty,m_\nu]}$ and $\pi_*|_{(-\infty,m_\mu)\times [m_\nu,\infty)}$ are comonotone. 
 % \end{conjecture}

\begin{figure}[t]

\begin{subfigure}{0.464\textwidth}
\centering \includegraphics[width=1\linewidth]{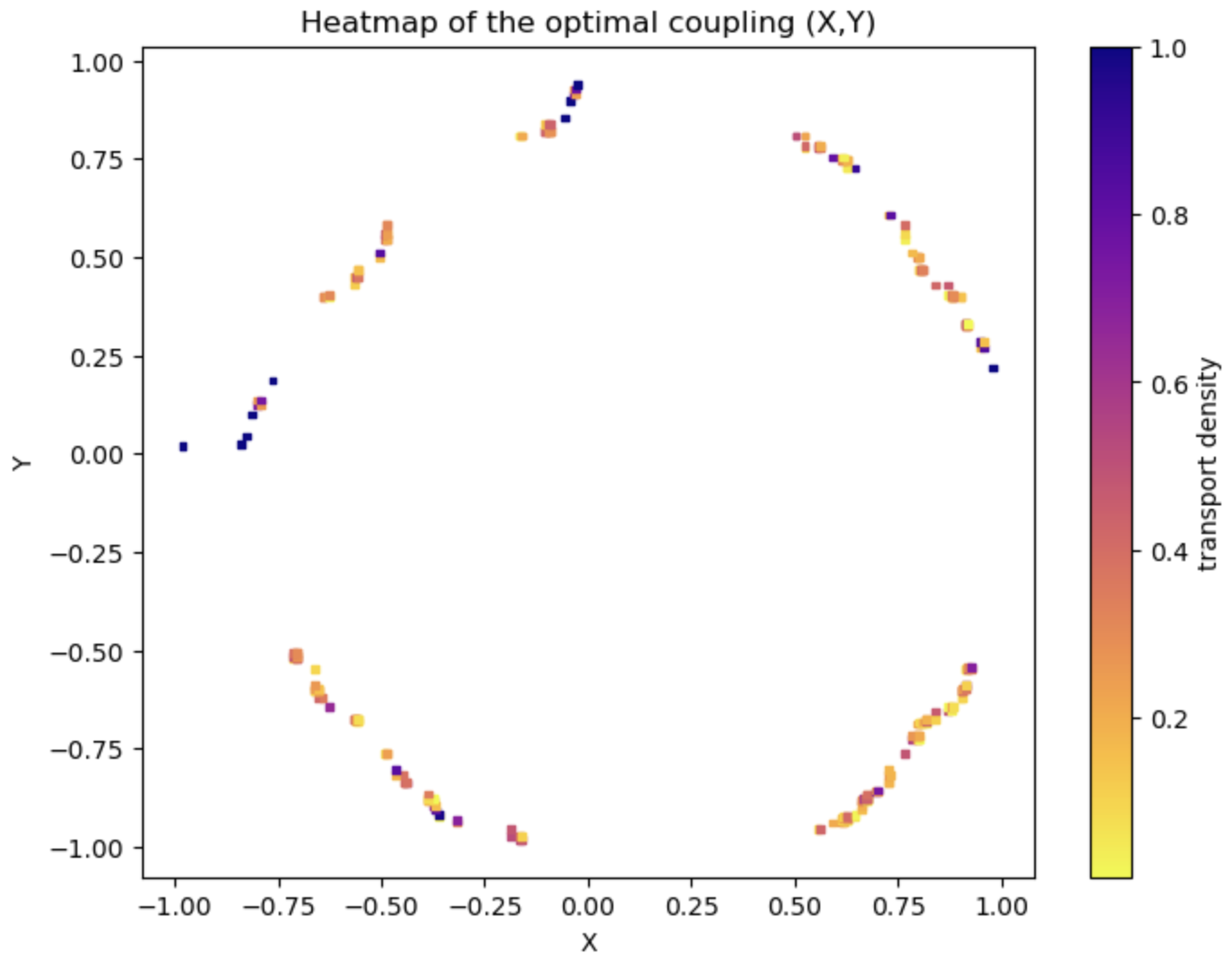}
\caption{the optimal coupling $(X,Y)$}
\end{subfigure}
\begin{subfigure}{0.45\textwidth}
\centering \includegraphics[width=1\linewidth]{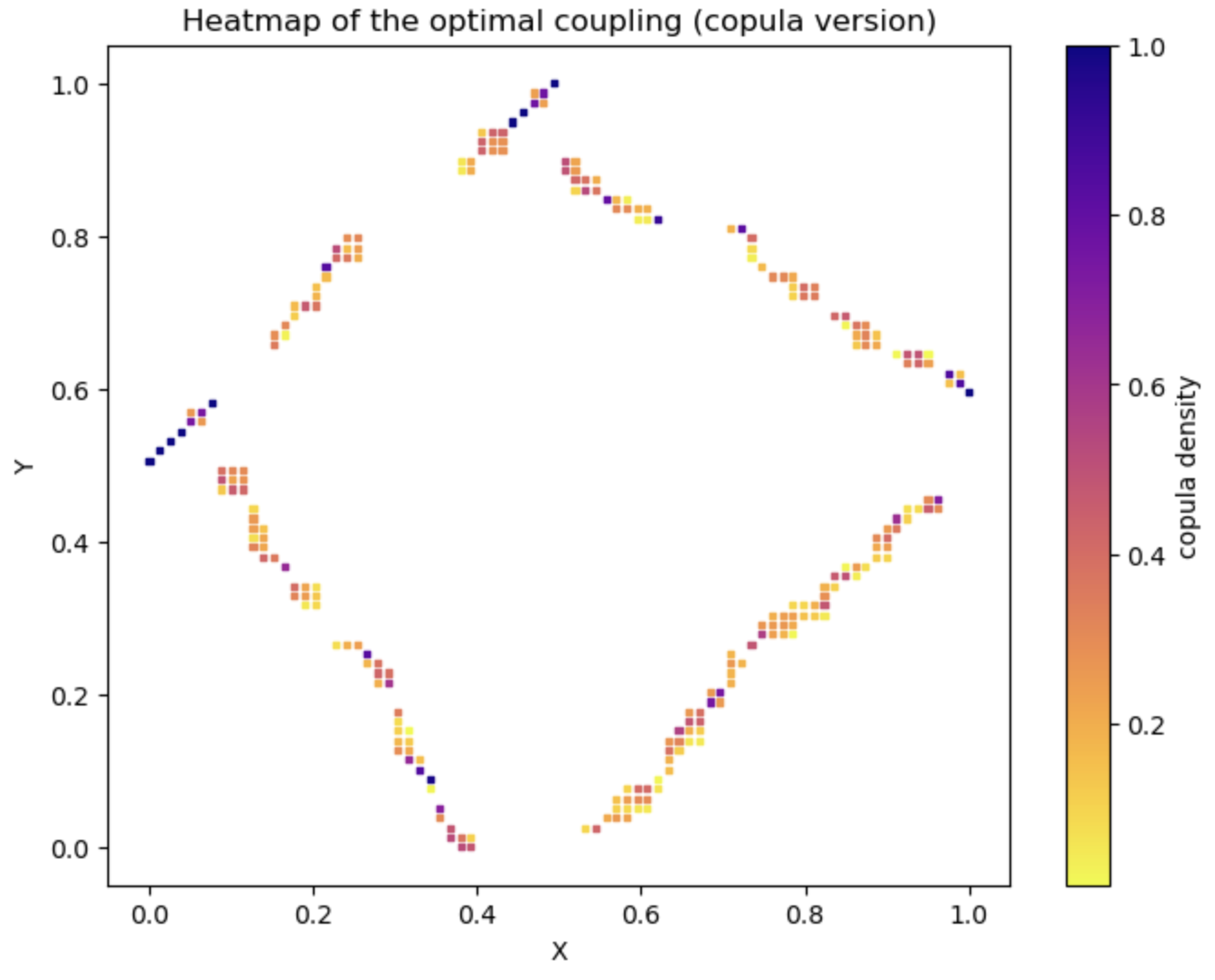}
\caption{copula of the optimal coupling $(X,Y)$}
\end{subfigure}
\caption{Plots of the quadratic-form optimal coupling $(X,Y)$ and its copula version, with cost function $e^{-((x-x')^2+(y-y')^2)/2}$ and marginals $\mu$ uniformly distributed on $[-1,0]\cup[1/2,1]$ and $\nu$ uniform on $[-1,-1/2]\cup[0,1]$, both approximated by $80$ iid samples. Since the QOT problem is convex (Theorem \ref{thm:diamond}), we apply the OSQP solver to find the optimal transport plan, illustrated with the heatmaps. The copula remains of diamond shape, but differs from the diamond copula which is perfectly symmetric.}
\label{fig:asymmetric diamond} 
\end{figure}
 
% \item \sloppy The analysis in Section \ref{sec:dispersion} on the linear-exponential distance cost functions $|y-y'|e^{-\gamma|x-x'|},\,\gamma>0$ leads us to conjecture that maximizers  of \eqref{eq:D} converge  to $\pi_{\mathrm{ind}}$ weakly as $ \gamma\to \infty$, which is  numerically supported by Figure \ref{fig:dispersion maximizer}. This also serves as the counterpart of Proposition \ref{prop:convergence gamma to 0}. 

% \begin{conjecture}
%      For each $\gamma>0$, let $\pi^\gamma$ be a maximizer of \eqref{eq:D}. Then $\lim_{\gamma\to \infty}\pi^\gamma=\pi_{\mathrm{ind}}$ weakly.
% \end{conjecture}

\item In Theorem \ref{thm:p-cost}, we explicitly solved a class of QOT problems with cost function $|(x-x')(y-y')|^q,\,1<q\leq 2$ by realizing it as a limit of other solvable classes. We conjecture that the moment condition can be relaxed to $\mu,\nu\in\P_q(\R)$ and the minimizer is unique. The case $q>2$ also deserves future study, as it is equivalent to maximization of the $(2,q)$-GW transport cost in \eqref{eq:GWpq}.
% \begin{conjecture}
%     Let $q\in(1,2]$ and $\mu,\nu\in\P_q(\R)$. Suppose that $\mu,\nu$ are symmetric and the cost function $c$ is given by 
%  $c(x,y,x',y')=|(x-x')(y-y')|^q.$ 
% The unique minimizer of the QOT problem is given by $\pi_{\mathrm{dia}}$.
% \end{conjecture}

\item In addition to the results we obtained and the conjectures above, many other cost functions may yield explicit optimizers of the QOT, which need to be further explored.
For instance, we do not know the QOT minimizers 
for the type-XX cost function $\min\{|x-x'|,|y-y'|\}$, although the QOT minimizers for similar cost functions $\max\{|x-x'|,|y-y'|\}$ 
and $\min\{x-x',y-y'\}$  are  solved
in Example \ref{ex:th-sub}.
As another example, we do not know the QOT minimizers for the cost function 
$|(x-x')(y-y')|^q,\,1<q\leq 2$
when $\mu,\nu$ are not symmetric (the symmetric case is solved in Theorem \ref{thm:p-cost}).

  \end{enumerate}

\end{enumerate}

\section{Omitted proofs of results from Section \ref{sec:general}}\label{app:sec3}

\begin{proof}[Proof of Proposition \ref{prop:convex?}]
    Consider distinct transport plans $\pi_0,\pi_1\in\Pi(\mu,\nu)$ and denote by $\pi_\lambda=(1-\lambda)\pi_0+\lambda\pi_1$ their convex combination for $\lambda\in[0,1]$.  By symmetry of $\phi$, it holds
    \begin{align}
      \begin{split}
           & (1-\lambda)\iint c\,\d\pi_0\otimes\d\pi_0+\lambda\iint c\,\d\pi_1\otimes\d\pi_1 -\iint c\,\d\pi_\lambda\otimes\d\pi_\lambda\\
        &=\lambda(1-\lambda)\iint c\,\d(\pi_0-\pi_1)\otimes\d(\pi_0-\pi_1).
      \end{split}\label{eq:lambda..}
    \end{align}
    Let $\{\pi^{(n)}\}_{n\in\N}$ be a sequence of signed atomic measures converging weakly to the signed measure $\pi_0-\pi_1$. It follows from \citet[Theorem 2.8]{billingsley2013convergence} that $\pi^{(n)}\otimes\pi^{(n)}\to (\pi_0-\pi_1)\otimes(\pi_0-\pi_1)$ weakly. By the positive definiteness of $\phi$, $\iint c\,\d\pi^{(n)}\otimes\d\pi^{(n)}\geq 0$ for each $n$. Since $c$ is bounded and continuous, 
    $$\iint c\,\d\pi^{(n)}\otimes\d\pi^{(n)}\to \iint c\,\d(\pi_0-\pi_1)\otimes\d(\pi_0-\pi_1)\quad\text{as }n\to\infty.$$
    This shows that \eqref{eq:lambda..} is nonnegative, and hence $\pi\mapsto \iint c\,\d\pi\otimes\d\pi$ is convex.
\end{proof}

\begin{proof}[Proof of Proposition \ref{prop:existence uniqueness}]
     Suppose that $\pi_n\to\pi$ in the weak topology on $\Pi(\mu,\nu)$. Theorem 2.8 of \cite{billingsley2013convergence} yields that $\pi_n\otimes\pi_n\to\pi\otimes\pi$ weakly in the space of probability measures on $(\X\times\Y)^2$. Since $c\in \mathcal C(\mu,\nu)$ is lower semi-continuous, the map
     $$\pi\mapsto \iint c\,\d\pi\otimes\d\pi$$
     is lower semi-continuous by the Portmanteau lemma. 
  Since $\Pi(\mu,\nu)$ is weakly compact, a minimizer of \eqref{eq:QOT cost} exists.   The second claim follows immediately since the set $\T(\mu,\nu)$ of Monge transport maps is weakly dense in $\Pi(\mu,\nu)$ for $\mu$ atomless and $\X$ compact (Theorem 1.32 of \cite{Santambrogio:2015}).
\end{proof}

\begin{proof}[Proof of Proposition \ref{prop:stability}]
A standard argument using Prokhorov's theorem shows that $\{\mu_n\}$ and $\{\nu_n\}$ are equi-tight, and hence $\{\pi_n\}$ is relatively compact; see the proof of Theorem 6.8 of \cite{ambrosio2021lectures}. Let $\pi\in \Pi(\mu,\nu)$ be a limit point of $\{\pi_n\}$. Theorem 2.8 of \cite{billingsley2013convergence} then implies that $\pi_n\otimes\pi_n\to\pi\otimes\pi$ weakly. Since $c$ is continuous and satisfies \eqref{eq:uniform integrable}, we have $\iint c\,\d\pi_n\otimes\d\pi_n\to \iint c\,\d\pi\otimes\d\pi$ (see \citet[Theorem 2.20]{van2000asymptotic} and the example that follows). On the other hand, for any $\hat{\pi}\in\Pi(\mu,\nu)$, Sklar's theorem (\citet[Theorem 7.3]{mcneil2015quantitative}) implies that there exists a copula $C$ such that the cdf of {$\hat{\pi}$} is equal to $C(F_\mu,F_{\nu})$, where $F_{\mu}$ is the cdf of $\mu$. 
Take $\pi_n'$ specified by its cdf $C(F_{\mu_n} ,F_{\nu_n})$.
We have $\pi_n'\in\Pi(\mu_n,\nu_n)$ and $\pi_n'\to\hat{\pi}$ weakly. Therefore, 
$$\iint c\,\d\pi\otimes\d\pi = \lim_{n\to\infty}  \iint c\,\d\pi_n\otimes\d\pi_n \le \lim _{n\to\infty}  \iint c\,\d\pi_n'\otimes\d\pi_n'=\iint c\,\d\hat{\pi}\otimes\d\hat{\pi}.$$
Altogether, we conclude that $\pi$ is a QOT minimizer with marginals $\mu,\nu$ and cost function  $c$.    
\end{proof}

\begin{proof}[Proof of Proposition \ref{prop:lb from conditioning}]
    For each $\pi\in \Pi(\mu,\nu)$, we have by the Fubini--Tonelli theorem,
    \begin{align}
        \begin{split}
            \iint c(x,y,x',y')\,\d \pi(x,y)\,\d \pi (x',y')&=\int\bigg(\int c(x,y,x',y')\,\d \pi (x',y')\bigg)\,\d \pi(x,y) \\
        &\geq \int\C_{c_{x,y}}(\mu,\nu)\,\d \pi(x,y)\geq \C_{\hat{c}}(\mu,\nu).
        \end{split}\label{eq:lb cond}
    \end{align}
    This proves the first claim. The second claim follows by noting that, under the given assumptions, both inequalities in \eqref{eq:lb cond} are equalities.
\end{proof}

\begin{proof}[Proof of Proposition \ref{prop:lb from multimarginal}]
We extend the domain of the infimum by considering the infimum over a larger class of probability measures on $(\X\times\Y)^2$ that contains $\pi\otimes\pi$. 
Define $\Pi_{f,g}$ as the set of probability measures $\tilde{\pi}$ on $(\X\times\Y)^2$ such that for $(X,Y,X',Y')\lawis\tilde{\pi}$, we have $f(X,X')\lawis \mu_f$ and $g(Y,Y')\lawis \nu_g$. Clearly, $\pi\otimes\pi\in\Pi_{f,g}$. This implies that
\begin{align*}
    \begin{split}
        \inf_{\pi\in\Pi(\mu,\nu)}\iint h(f(x,x'),g(y,y'))\,\d \pi(x,y)\,\d \pi (x',y')
        &\geq \inf_{\tilde{\pi}\in \Pi_{f,g}}\int h(f(x,x'),g(y,y'))\,\d\tilde{\pi}(x, y,  x', y')\\
        &=\inf_{\hat{\pi}\in\Pi(\mu_f,\nu_g)}\int h(\xi,\zeta)\,\d\hat{\pi}(\xi,\zeta),
    \end{split}%\label{eq:pi}
\end{align*}
as desired.  
\end{proof}

\section{Omitted proofs of results from Section \ref{sec:explicit}}\label{app:sec4}

{We prove Proposition \ref{ex:quadratic cost-f} first. The proof of Proposition \ref{ex:quadratic cost} uses Proposition \ref{ex:quadratic cost-f}.}

\begin{proof}[Proof of Proposition \ref{ex:quadratic cost-f}]
{
 Using the independence of $(X,Y)$ and $(X',Y')$, we see that the objective $\E_{\pi\otimes\pi}[c(X,Y,X',Y')]$ is a linear or quadratic function of $\E_\pi[f(X,Y)]$. On the other hand, since $f$ is submodular, the upper and lower bounds for $\E_\pi[f(X,Y)]$ are attained explicitly  by  $\pi_{\mathrm{com}}$ and $\pi_{\mathrm{ant}}$, respectively. In addition, for any $\beta $ in the interval    \begin{equation}        \label{eq:QC3} \Big[\inf_{\pi\in\Pi(\mu,\nu)}\E_\pi[f(X,Y)],\sup_{\pi\in\Pi(\mu,\nu)}\E_\pi[f(X,Y)]\Big],    \end{equation} 
    there exists $\lambda \in [0,1]$ such that  $\E_{\pi_{\rm x}^\lambda }[f(X,Y)]=\beta$ because  $\lambda \mapsto \E_{\pi_{\rm x}^\lambda}[f(X,Y)]$ is affine.  This shows that a QOT minimizer $\pi_{\lambda}$ exists. 
    The last statement follows by noting that, for some choices of $C_1,C_2,\alpha_1,\alpha_2\in\R$ (which are arbitrary) in \eqref{eq:QC-f}, the range \eqref{eq:QC3} 
 for $\beta$ is not a singleton and   
    any $\beta$ in \eqref{eq:QC3}  can be a unique minimizer. 
Hence, any $\lambda\in [0,1]$ can yield a unique minimizer
     in the class $(\pi_{\rm x}^\lambda)_{\lambda \in [0,1]}$.   %   This characterizes the range of the map $\pi\in\Pi(\mu,\nu)\mapsto \E_\pi[XY]$, and hence solves the desired optimization problem where a minimizer is given by a mixture of the comonotone and antimonotone couplings.
% This follows from the same arguments in the proof of Proposition \ref{ex:quadratic cost} by noting that 
%     every point in the interval 
%   $$   \left[\inf_{\pi\in\Pi(\mu,\nu)} \int f\, \d \pi ,\sup_{\pi\in\Pi(\mu,\nu)}\int f\, \d \pi  \right]
%    $$
%    is attained by some $\pi_{\lambda}$.
   }
    \end{proof}

\begin{proof}[Proof of Proposition \ref{ex:quadratic cost}]
 {
 By independence, we may write 
    $$\E_{\pi\otimes\pi}[c(X,Y,X',Y')]=\E_\pi[f(X,Y)]\E_\pi[g(X,Y)],\quad \pi\in\Pi(\mu,\nu).$$
    The marginal terms of $f,g$ have constant expectations, so there exist constants $C_1,C_2,\alpha_1,\alpha_2\in\R$ such that
    \begin{equation}        \label{eq:QC2}\E_\pi[f(X,Y)]\E_\pi[g(X,Y)]=(C_1+\alpha_1\E_\pi[XY])(C_2+\alpha_2\E_\pi[XY]). 
    \end{equation} 
   The desired result is then a special case of Proposition \ref{ex:quadratic cost-f} since $(x,y)\mapsto xy$ is supermodular. The last statement follows since any combination of $(C_1,C_2,\alpha_1,\alpha_2)$ can be achieved by certain choices of the quadratic functions $f,g$.
     }
\end{proof}

\begin{proof}[Proof of Theorem \ref{prop:comonotone}]
Define the function $c_{x,y}(x',y')=c(x,y,x',y')$ and recall the notation in Proposition \ref{prop:lb from conditioning}.
 It follows from the optimality of $\pi_{\rm com}$ for submodular cost functions and the Fubini--Tonelli theorem that 
$$ \iint c\, \d \pi_{\rm com}\otimes \d \pi_{\rm com} =\int\bigg(\int c(x,y,x',y')\,\d \pi_{\mathrm{com}} (x',y')\bigg)\,\d \pi_{\mathrm{com}}(x,y) = \int\C_{c_{x,y}}(\mu,\nu)\,\d \pi_{\mathrm{com}}(x,y).$$
Note that the function $(x,y)\mapsto \C_{c_{x,y}}(\mu,\nu)$ is also submodular, as a weighted combination of submodular functions. As a consequence,
$$\int\C_{c_{x,y}}(\mu,\nu)\,\d \pi_{\mathrm{com}}(x,y)= \C_{\hat{c}}(\mu,\nu).$$
In other words, both inequalities in \eqref{eq:lb cond} are equalities for $\pi=\pi_{\mathrm{com}}$. This implies that $\pi_{\mathrm{com}}$ must be a minimizer because of Proposition \ref{prop:lb from conditioning}.   The supermodular case is analogous. 
\end{proof}

\begin{proof}[Proof of Theorem \ref{prop:cocounter}]
Without loss of generality, we can assume $\mu=\nu$, as the location-scale transform can be absorbed into $h$ without affecting submodularity or the uniqueness. 
 Let   $\kappa$ be the law of $|X-X'|$, where $X\lawis\mu$ and $X'$ is an independent copy of $X$.   
%  Our goal will be characterizing the infimum of
%  \begin{align}
%     \iinth(|x-x'|,|y-y'|)\,\d \pi(x,y)\,\d \pi (x',y')\label{eq:pi1}
% \end{align} 
% over $\pi\in\Pi(\mu,\nu)$. 
Let $\widehat{\Pi}$ be the set of probability measures $\hat{\pi}$ on $\R^4$ such that for $(X,Y,X',Y')\lawis\hat{\pi}$, we have $|X-X'|,|Y-Y'|\lawis \kappa$. Proposition \ref{prop:lb from multimarginal} then implies 
\begin{align} 
        \inf_{\pi\in \Pi(\mu,\nu)}\iint  c\, \d \pi \otimes \d \pi  \geq \inf_{\hat{\pi}\in \widehat{\Pi}}\int_{\R^4}h(|x-x'|,|y-y'|)\,\d\hat{\pi}(x,y,x',y').
    \label{eq:pi}
\end{align}
The integral on the right-hand side of \eqref{eq:pi} depends only on the coupling of $(|Y-Y'|,|X-X'|)$ under the law $\hat{\pi}$. Since the marginals of $|X-X'|$ and $|Y-Y'|$ both follow the law $\kappa$ under any $\hat{\pi}\in \widehat{\Pi}$, the right-hand side of \eqref{eq:pi} coincides with the optimal transport cost between laws $\kappa$ and $\kappa$ with cost function $h$. If $h$ is submodular, the problem is uniquely minimized by $\pi_{\mathrm{com}}$. This is equivalent to $|X-X'|=|Y-Y'|$ almost surely.
To show that the comonotone coupling is a minimizer, observe that under $\pi_{\mathrm{com}}$, $X=Y$ and $X'=Y'$ hold, and hence $|X-X'|=|Y-Y'|$.

Assume that $h$ is strictly submodular. The right-hand side of \eqref{eq:pi} is then uniquely minimized by the comonotone coupling, or $|X-X'|=|Y-Y'|$ almost surely. It remains to show that $\pi_{\mathrm{com}}$ (and $\pi_{\mathrm{ant}}$ if $\mu$ is symmetric) is the unique transport plan that verifies $|X-X'|=|Y-Y'|$.  
Indeed, this relation implies  
$$(X-X’+Y-Y’)(X-X’-Y+Y’) = (X-X’)^2 - (Y-Y’)^2=0.$$
 Hence, either $X+Y=X’+Y’$ or $X-Y=X’-Y’$ almost surely. Since the two sides are independent, we have either $X+Y$ is a constant (only if $(X,Y)\lawis\pi_{\mathrm{ant}}$) or $X-Y$ is a constant (only if $(X,Y)\lawis\pi_{\mathrm{com}}$). Since $\mu=\nu$, the comonotone coupling verifies $X-Y=0$; the antimonotone coupling verifies $X+Y$ is a constant if and only if $\mu$ is symmetric. This completes the proof. 
\end{proof}

To prove Theorem \ref{prop:wedge}, we need the following lemma. 

\begin{lemma}\label{lemma:opt cts}
Fix $\alpha,\beta\in(0,1]$ and $\gamma>0$. Consider the following optimization problem:
    \begin{align*}
        \mbox{maximize}\quad&\p(|X-Y|\leq\gamma)\\
        \mbox{subject to}\quad &X,Y\mbox{ are independent with respective densities }f_X,f_Y;\\
        &f_X(x)\leq \alpha^{-1}\bone_{[0,1]}(x);\\
        &f_Y(y)\leq \beta^{-1}\bone_{[0,1]}(y).
    \end{align*}
    Then an optimizer $(X,Y)$ is given by 
    $X\lawis\mathrm{U}((1-\alpha)/2,(1+\alpha)/2)$ and $Y\lawis\mathrm{U}((1-\beta)/2,(1+\beta)/2)$.    
\end{lemma}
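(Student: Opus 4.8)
The problem is to maximize $\p(|X-Y| \le \gamma)$ over independent $X,Y$ with densities bounded by $\alpha^{-1}\mathbf 1_{[0,1]}$ and $\beta^{-1}\mathbf 1_{[0,1]}$ respectively. I would first rewrite the objective as
\begin{align*}
\p(|X-Y|\le\gamma) = \int_0^1\int_0^1 \mathbf 1_{\{|x-y|\le\gamma\}} f_X(x) f_Y(y)\,\d x\,\d y = \int_0^1 f_X(x)\, G(x)\,\d x,
\end{align*}
where $G(x) = \int_{(x-\gamma)\vee 0}^{(x+\gamma)\wedge 1} f_Y(y)\,\d y = \p(|x-Y|\le\gamma)$. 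The outline is a two-stage argument: first fix $f_Y$ and optimize over $f_X$; then optimize over $f_Y$. Both stages are instances of the same principle: given a fixed ``weight'' function $w$ on $[0,1]$ and the constraint that a density $f$ satisfies $0\le f\le m^{-1}\mathbf 1_{[0,1]}$ and $\int f = 1$ (so its support has measure at least $m$), the integral $\int f w$ is maximized by putting $f \equiv m^{-1}$ on the super-level set $\{w \ge t^*\}$ where $t^*$ is chosen so that this set has measure exactly $m$ (a ``bathtub'' or Hardy–Littlewood-type rearrangement argument). This is elementary: for any competing $f$, $\int f(w - t^*) \le \int (m^{-1}\mathbf 1_{\{w\ge t^*\}})(w-t^*)$ because on $\{w \ge t^*\}$ we have $w - t^* \ge 0$ and $f \le m^{-1}$, while on $\{w < t^*\}$ we have $w - t^* < 0$ and $f \ge 0$.

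**Carrying it out.** For the inner optimization, fix any admissible $f_Y$; then $G$ is continuous, and its super-level sets are intervals-unions, but the key fact I need is that $G$ is \emph{unimodal} and symmetric-ish about the center of mass of $f_Y$ — more precisely, I would argue that among all admissible $f_X$, the optimal one is uniform on an interval of length $\alpha$. To nail down \emph{which} interval, I use that $G(x)$ is largest where $x$ is ``most surrounded'' by the mass of $Y$; combined with the subsequent optimization over $f_Y$, symmetry forces everything to center at $1/2$. Concretely, I would run the argument in the symmetric class first: show that one may assume $f_X, f_Y$ are each symmetric about $1/2$ (by a symmetrization/averaging step — replacing $f_X(x)$ by $\tfrac12(f_X(x) + f_X(1-x))$ and likewise for $f_Y$; since the constraint set and the functional $\p(|X-Y|\le\gamma)$ are both invariant under $x\mapsto 1-x$ applied simultaneously, and the functional is ``bilinear-concave'' enough under this averaging — this needs a short check that $\p(|X-Y|\le\gamma)$ does not decrease under joint symmetrization). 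Then, within symmetric densities, apply the bathtub lemma: the optimal symmetric $f_X$ given symmetric $f_Y$ is uniform on a centered interval, because $G$ is then symmetric and unimodal about $1/2$ (this unimodality follows from $f_Y$ being symmetric unimodal, which I would establish by iterating the bathtub argument — a symmetric super-level set of a symmetric function is a centered interval). Iterating the two stages, or invoking a fixed-point/consistency argument, pins the optimizer down to $X\lawis\mathrm{U}((1-\alpha)/2,(1+\alpha)/2)$, $Y\lawis\mathrm U((1-\beta)/2,(1+\beta)/2)$.

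**The main obstacle.** The delicate point is the symmetrization step: it is not obvious that joint symmetrization $f_X \mapsto \bar f_X$, $f_Y \mapsto \bar f_Y$ cannot strictly decrease $\p(|X-Y| \le \gamma)$, since the functional is a \emph{bilinear} form in $(f_X, f_Y)$, not a concave one, so naive Jensen does not apply. I expect the cleanest route is to avoid global symmetrization and instead argue directly: given \emph{any} optimal pair, apply the bathtub lemma to $f_X$ (with $f_Y$ fixed) to conclude $f_X$ is uniform on some interval $I_X$ of length $\alpha$; then apply it to $f_Y$ to get $f_Y$ uniform on some $I_Y$ of length $\beta$; finally, with both densities uniform on intervals, the problem reduces to a one-dimensional calculus problem — choose the offsets of $I_X, I_Y$ within $[0,1]$ to maximize the overlap integral $\int\int \mathbf 1_{\{|x-y|\le\gamma\}}\,\d x\,\d y$ over the two intervals — and a direct computation (or a convexity/concavity observation about how this overlap depends on the midpoints) shows the maximum is attained when both intervals are centered at $1/2$. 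The bookkeeping for the cases $\gamma \ge$ or $< $ relevant interval lengths, and boundary effects when an interval would stick out of $[0,1]$, is the routine-but-fiddly part; the structural content is entirely in the bathtub lemma plus the final reduction to centered intervals.
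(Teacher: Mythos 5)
Your proposal takes a genuinely different route from the paper's. The paper proves this lemma by reducing to the \emph{discrete} version (Lemma \ref{lemma:opt discrete}, which is Lemma 2.8 of \citet{burkard1998quadratic}): it approximates any competing $(\hat X,\hat Y)$ and the claimed optimizer $(X,Y)$ by discrete uniform laws, applies the discrete QAP result to each $n$, and passes to the limit via the Portmanteau lemma. Your proposal instead aims for a direct rearrangement argument in the continuum. This is a reasonable idea, but as written it has a genuine gap.

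You correctly flag that the naive symmetrization step cannot work: since the objective $B(f_X,f_Y)=\iint f_X(x)f_Y(y)\bone_{\{|x-y|\le\gamma\}}\,\d x\,\d y$ is bilinear, one finds $B(\bar f_X,\bar f_Y)=\tfrac12\bigl[B(f_X,f_Y)+B(f_X,\tilde f_Y)\bigr]$ where $\tilde f_Y(y)=f_Y(1-y)$, and this can be \emph{strictly smaller} than $B(f_X,f_Y)$ (take $f_X,f_Y$ both concentrated near $0$ with small $\gamma$). However, the alternative you propose to avoid this --- ``apply the bathtub lemma to $f_X$ with $f_Y$ fixed to conclude $f_X$ is uniform on some interval $I_X$'' --- is also wrong. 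The bathtub lemma tells you that an optimal $f_X$ equals $\alpha^{-1}$ on a super-level set $\{G\ge t^*\}$ of $G(x)=\p(|x-Y|\le\gamma)$ of measure $\alpha$, but for an arbitrary admissible $f_Y$ the function $G$ need not be unimodal, so this super-level set need not be an interval. Without that interval structure, your final reduction to ``slide two intervals to maximize overlap'' does not start.

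The cleanest repair of your approach is to invoke the Riesz rearrangement inequality rather than either of your two symmetrization attempts. Since $t\mapsto\bone_{\{|t|\le\gamma\}}$ is already a symmetric decreasing function, Riesz gives
$$\iint f_X(x)f_Y(y)\bone_{\{|x-y|\le\gamma\}}\,\d x\,\d y \;\le\; \iint f_X^\sharp(x)f_Y^\sharp(y)\bone_{\{|x-y|\le\gamma\}}\,\d x\,\d y,$$
where $f_X^\sharp,f_Y^\sharp$ are the symmetric decreasing rearrangements about $1/2$; these rearrangements preserve the $L^\infty$ bound and the unit mass, and their supports remain inside $[0,1]$, so the constraints are preserved. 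For symmetric-decreasing $f_Y^\sharp$, the associated $G^\sharp$ is symmetric-decreasing in $|x-1/2|$, so its super-level set of measure $\alpha$ \emph{is} a centered interval and the bathtub step now yields $f_X^\sharp=\alpha^{-1}\bone_{[(1-\alpha)/2,(1+\alpha)/2]}$; the same for $f_Y^\sharp$. This closes the gap and gives a self-contained continuum proof, arguably cleaner than the paper's route through the discrete QAP literature. As currently written, though, your argument does not establish the lemma.
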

The proof of Lemma \ref{lemma:opt cts} is based on a result in \cite{burkard1998quadratic} on discrete assignment. The following result is equivalent to Lemma 2.8 of \cite{burkard1998quadratic}, which is the discrete version of Lemma \ref{lemma:opt cts}.

\begin{lemma}\label{lemma:opt discrete}
Let $p,q,n$ be integers satisfying $1\leq p,q\leq n$. For a fixed $\gamma>0$, consider the following optimization problem:
    \begin{align*}
        \mbox{maximize}\quad&\p(|X-Y|\leq\gamma)\\
        \mbox{subject to}\quad &X,Y\mbox{ are independent};\\
        &X\mbox{ is uniformly distributed on }p \mbox{ points in }[n];\\
        &Y\mbox{ is uniformly distributed on }q \mbox{ points in }[n].
    \end{align*}
    Then an optimizer $(X,Y)$ is given by $X$ being uniformly distributed on the last $p$ points of the finite sequence
    \begin{align}
        1,\,n,\,2,\,n-1,\,3,\dots,\label{eq:seq}
    \end{align}
    and $Y$ being uniformly distributed on the last $q$ points of \eqref{eq:seq}.
\end{lemma}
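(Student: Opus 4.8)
The lemma is purely combinatorial: $\mathbb{P}(|X-Y|\le\gamma)$ depends on $X$ and $Y$ only through their support sets. Writing $S_X,S_Y\subseteq[n]$ for these supports, so that $|S_X|=p$, $|S_Y|=q$ and both laws are uniform, we have
$$\mathbb{P}(|X-Y|\le\gamma)=\frac{1}{pq}\,N(S_X,S_Y),\qquad N(S_X,S_Y):=\#\{(x,y)\in S_X\times S_Y:\ |x-y|\le\gamma\},$$
and the problem is to maximize $N$ over all such pairs. The plan is to show that an optimal pair can be taken to consist of two nested, centred intervals of $\mathbb{Z}$ lying in $[n]$, and then to recognize that the suffixes of lengths $p$ and $q$ of the sequence \eqref{eq:seq} are precisely such a pair. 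Here it is convenient to write $B_k$ for the set of the last $k$ terms of \eqref{eq:seq}; a short inspection shows that $B_1\subseteq B_2\subseteq\cdots\subseteq B_n=[n]$ is a nested chain of intervals (each $B_k$ is an interval symmetric about $(n+1)/2$ when $n-k$ is even, and shifted by one half otherwise), so the claimed optimizer is exactly $(S_X,S_Y)=(B_p,B_q)$.

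The cleanest route I would take is a rearrangement argument. Set $u=\mathbf{1}_{S_X}$, $v=\mathbf{1}_{S_Y}$, and the band weight $w(t)=\mathbf{1}\{|t|\le\gamma\}$ on $\mathbb{Z}$, so that $N(S_X,S_Y)=\sum_{x,y}u(x)v(y)w(x-y)$. Since this quantity is invariant under translating $S_X$ and $S_Y$ by a common integer, the constraint $S_X,S_Y\subseteq[n]$ is harmless as long as $p,q\le n$: any configuration on $\mathbb{Z}$ can be slid into $[n]$. Applying the discrete Riesz-type rearrangement inequality on $\mathbb{Z}$ — which in our situation can be established by an elementary two-point compression argument, as the band $w$ is already symmetric and nonincreasing in $|t|$ and hence equal to its own rearrangement — we may replace $u$ and $v$ by their symmetric-decreasing rearrangements about a common point without decreasing $N$; these rearrangements are intervals of lengths $p$ and $q$, and they are nested once a consistent tie-breaking rule for the half-integer offsets is fixed. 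A case split on the parities of $n-p$ and $n-q$ then identifies this nested pair, up to the harmless reflection $i\mapsto n+1-i$, with $(B_p,B_q)$, and one reads off $\mathbb{P}(|X-Y|\le\gamma)=N(B_p,B_q)/(pq)$ for the stated optimizer.

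A self-contained alternative avoids the rearrangement inequality by a direct exchange argument: starting from an optimal $(S_X,S_Y)$ that is not of the desired shape, one locates an extremal element of $S_X\cup S_Y$ together with a "gap" strictly between $\min(S_X\cup S_Y)$ and $\max(S_X\cup S_Y)$, and shows that relocating the extremal element into the gap (adjusting the companion set when the element lies in $S_X\cap S_Y$) does not decrease $N$; iterating collapses $S_X\cup S_Y$ to a centred interval, after which a second pass nests $S_X$ inside $S_Y$. I expect this bookkeeping to be the main obstacle: because $N$ couples the two sets, a move that helps the "$x$-side" may hurt the "$y$-side", so one must argue carefully which extremal point to move (a case distinction on whether it lies in $S_X\setminus S_Y$, $S_Y\setminus S_X$, or $S_X\cap S_Y$), and the parities of $n-p$ and $n-q$ must still be tracked to pin down the exact interleaved form rather than merely "some centred nested intervals". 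Since this lemma is a restatement of Lemma~2.8 of \cite{burkard1998quadratic}, the most economical presentation is simply to cite that result and then verify by inspection that the suffixes $B_p\subseteq B_q$ of \eqref{eq:seq} realize a co-centred nested pair of intervals in $[n]$.
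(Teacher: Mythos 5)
The paper does not prove this lemma at all: the sentence preceding it simply observes that the statement is a restatement of Lemma~2.8 of \cite{burkard1998quadratic} and moves on. Your closing recommendation --- cite that result and verify by inspection that the suffix sets $B_p\subseteq B_q$ of \eqref{eq:seq} form a nested chain of centred intervals in $[n]$ --- is therefore exactly the paper's route. The Riesz-rearrangement sketch you lead with is a legitimate self-contained alternative, and the reductions you carry out for it are all sound: writing $N(S_X,S_Y)=\sum_{x,y}u(x)v(y)w(x-y)$ with $w(t)=\mathbf{1}\{|t|\le\gamma\}$ symmetric and nonincreasing in $|t|$, the translation-invariance observation, and the identification of the $B_k$ with nested intervals about (roughly) $(n+1)/2$ are correct. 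What the sketch still lacks in order to stand alone is the discrete Riesz rearrangement inequality on $\mathbb Z$ itself (two-point compression is the right tool, but the bookkeeping is nontrivial precisely because the symmetric rearrangement of an even-sized set sits at a half-integer centre) together with the promised parity analysis that pins down which nested pair of intervals one lands on, up to the reflection $i\mapsto n+1-i$. You flag both of these gaps yourself, so the sketch is honest about what it omits; what the citation to \cite{burkard1998quadratic} buys you is precisely the option of skipping that bookkeeping, which is what the paper chooses to do.
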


% The next lemma generalizes Lemma \ref{lemma:opt discrete} to the continuous case. 

\begin{proof}[Proof of Lemma \ref{lemma:opt cts}] Let $(X,Y)$ be given by the claimed optimal solution. 
Suppose on the contrary that there exist independent random variables $\hat{X},\hat{Y}$ satisfying the constraints, whose joint law $(\hat{X},\hat{Y})$ is different from $(X,Y)$, and furthermore,
\begin{align}
    \p(|\hat{X}-\hat{Y}|\leq\gamma)>\p(|X-Y|\leq\gamma)\label{eq:hats}
\end{align}for some $\gamma>0$. 
Note that $(\hat{X},\hat{Y})$ is absolutely continuous with bounded density by the constraints. Then there exist a sequence of random variables $(\hat{X}_n,\hat{Y}_n)_{n\geq 1}$ such that:
\begin{itemize}
    \item for each $n$, $\hat{X}_n$ and $\hat{Y}_n$ are independent;
    \item $\hat{X}_n$ (resp.~$\hat{Y}_n$) is supported uniformly on at most $\lfloor \alpha n\rfloor$ (resp.~$\lfloor \beta n\rfloor$) points of $\mathbb{Z}/n\cap [0,1]$;
    \item $(\hat{X}_n,\hat{Y}_n)\to (\hat{X},\hat{Y})$ in distribution.
\end{itemize}
Similarly, there exist 
a sequence of random variables $({X}_n,{Y}_n)_{n\geq 1}$ such that:
\begin{itemize}
    \item for each $n$, ${X}_n$ and ${Y}_n$ are independent;
    \item $X_n$ (resp.~$Y_n$) is uniformly supported on the last $\lfloor \alpha n\rfloor$ (resp.~$\lfloor \beta n\rfloor$) elements of \eqref{eq:seq} scaled by $1/n$.
\end{itemize}
It follows that $(X_n,Y_n)\to (X,Y)$ in distribution.
By Lemma \ref{lemma:opt discrete}, we have for each $n$ that
$$\p(|\hat{X}_n-\hat{Y}_n|\leq\gamma)\leq \p(|X_n-Y_n|\leq\gamma).$$
Taking the limit in $n$ and applying the Portmanteau lemma, we have
$$\p(|\hat{X}-\hat{Y}|\leq\gamma)\leq \p(|X-Y|\leq\gamma),$$
leading to a contradiction against \eqref{eq:hats}.   
\end{proof}

\begin{proof}[Proof of Theorem \ref{prop:wedge}] By absorbing $a$ and $b$ into $f$, without loss of generality we can assume $a=0$ and $b=1$.
    We first apply the decomposition
    $$f(|x-x'|)=\int \bone_{\{|x-x'|\geq u\}}\d\lambda(u)=:\int c_u(x,x')\,\d\lambda(u),$$
    and 
   $$g(y,y')=\int \bone_{[v,\infty)\times [v',\infty)}(y,y')\,\d \eta(v,v')=:\int c_{v,v'}(y,y')\,\d \eta(v,v')$$ 
    where $\lambda,\eta$ are some positive measures. For instance, the measure $\eta$ may be defined via
    $$\eta((s,t]\times (s',t'])=g(t,t')+g(s,s')-g(s,t')-g(t,s')\quad\text{ for }\quad s<t,~s'<t'.$$
    By the monotone convergence and Fubini theorems, it remains to show that for every fixed $u,u',v$, $\pi_{\rm v}$ is a minimizer of the QOT problem with the cost function
    $$c(x,y,x',y')=c_{u}(x,x')c_{v,v'}(y,y')=\bone_{\{|x-x'|\geq u\}}\bone_{[v,\infty)\times [v',\infty)}(y,y').$$
    Such a problem is equivalent to finding $(X,Y,X',Y')\lawis \pi$ that minimizes
    \begin{align}
        \p(|X-X'|\geq u,\,Y\geq v,\,Y'\geq v'),\label{eq:XX'2}
    \end{align}
    subject to $(X,Y)\dd (X',Y')$, the independence of $(X,Y)$ and $ (X',Y')$, and the marginal constraints from $\pi$ that $X\lawis\mu$ and $Y\lawis\nu$. We focus on the case where $\p(Y\geq v)>0$ and $\p(Y'\geq v')>0$, otherwise the problem is trivial as \eqref{eq:XX'2} evaluates to zero. Without loss of generality, we may first remove the constraint that $(X,Y)\dd (X',Y')$ and later show that it is indeed satisfied by the minimizer. Denote by $\xi_1$ the law of $X\mid Y\geq v$ and $\xi_2$ the law of $X'\mid Y'\geq v'$. Minimizing \eqref{eq:XX'2} is then equivalent to minimizing $\p(|\xi_1-\xi_2|\geq u)$, where $\xi_1,\xi_2$ are independent. Observe that the marginal constraints on $\pi$ are equivalent to constraining $\xi_1$ having density bounded by $1/\p(Y\geq v)$ on $[0,1]$, and similarly $\xi_2$ having density bounded by $1/\p(Y'\geq v')$ on $[0,1]$. Indeed, any such law $\xi_1$ can be written as the law of $X\mid Y\geq v$ for some coupling $(X,Y)$ satisfying the marginal constraints. In other words, we have reduced to the following problem:
    \begin{align*}
        \mbox{to minimize}\quad&\p(|\xi_1-\xi_2|\geq u)\\
        \mbox{subject to}\quad &\xi_1,\xi_2\mbox{ are independent r.v.s on }[0,1]\mbox{ with respective densities }f_{\xi_1},f_{\xi_2};\\
        &f_{\xi_1}\leq 1/\p(Y\geq v)\mbox{ on }[0,1];\\
        &f_{\xi_2}\leq 1/\p(Y'\geq v')\mbox{ on }[0,1].
    \end{align*}
    By Lemma \ref{lemma:opt cts}, a solution is given by 
    $$\xi_1\lawis \mathrm{U}\Big(\frac{1-\p(Y\geq v)}{2},\frac{1+\p(Y\geq v)}{2}\Big)\quad\text{ and }\quad \xi_2\lawis \mathrm{U}\Big(\frac{1-\p(Y'\geq v')}{2},\frac{1+\p(Y'\geq v')}{2}\Big).$$
    By Definition \ref{def:wedge}, the V-transport satisfies that for each $v\in\R$, 
    $$X\mid Y\geq v\lawis \mathrm{U}\Big(\frac{1-\p(Y\geq v)}{2},\frac{1+\p(Y\geq v)}{2}\Big).$$
   % This follows since the law of $(X,Y)$ is symmetric around $X=1/2$, and $(X,Y)\mid X\geq 1/2$ is comonotone. 
    Since $\pi_{\rm v}$ does not depend on the choices of $u,u',v$, the constraint $(X,Y)\dd (X',Y')$ in the minimization problem \eqref{eq:XX'2} is automatically satisfied. This completes the proof.    
    %{caveat: this might only hold for $Y$ uniform; in general, here the $c$ is defined in terms of location but for QAP it's defined in terms of rank, so we need e.g.~$h$ is increasing in the rank difference, not location difference.}
\end{proof}

\section{Omitted proofs of results from Section \ref{sec:diamond}}\label{app:sec5}

\begin{proof}[Proof of Lemma \ref{lemma:1/2}]
By \eqref{eq:4c}, it remains to show that $\tilde{c}$ is supermodular on the support of $\pi_{\rm ind}$ in $(-\infty,0]^2$, i.e., with $Q_\mu,\,Q_\nu$ denoting the left quantile functions of $\mu,\nu$, 
\begin{align}
    \int c_{xy}(Q_\mu(p),Q_\nu(q),Q_\mu(p'),Q_\nu(q'))\,\d C_{\mathrm{dia}}(p',q')\geq 0,\quad p,q\in(0,1/2),\label{eq:superm}
\end{align}
where $c_{xy}$ denotes the second-order partial derivative of $c$ with respect to the first two variables. 
% By \eqref{eq:phi^2} and symmetry of $\pi_{\mathrm{dia}}$,
% \begin{align*}
%     \tilde{c}(x,y)&=\int c(x,y,x',y')\,\d\pi_{\mathrm{dia}}(x',y')\\
%     &=\int c(y,x,y',x')\,\d\pi_{\mathrm{dia}}(x',y')=\int c(y,x,x',y')\,\d\pi_{\mathrm{dia}}(x',y')=\tilde{c}(y,x),
% \end{align*}
% and hence it remains to show \eqref{eq:superm} for $x,y\in(0,1/2)$ and $x>y$. 
Let $\psi:\R_+\to\R$ be the function given by $\psi(u)=\phi(u^2)$.
For notational simplicity, let $g:[0,1]^4\to \R$ be given by 
$$
g(p,q,p',q'):= c_{xy}(Q_\mu(p),Q_\nu(q),Q_\mu(p'),Q_\nu(q'))
$$
and $\eta_\mu,\eta_\nu:\{(p,p'):0\leq p'\leq p\leq 1\}\to \R$ be given by 
$$
\eta_\mu(p,p'):= \psi'(Q_\mu(p)-Q_\mu(p'))\quad\text{ and }\quad \eta_\nu(p,p'):= \psi'(Q_\nu(p)-Q_\nu(p')).
$$
Using the assumption 
$$c(x,y,x',y')=\phi((x-x')^2)\phi((y-y')^2)=\psi(|x-x'|)\psi(|y-y'|)$$
and monotonicity of $Q_\mu,\,Q_\nu$, we have (in the a.e.~sense)
\begin{align}
g(p,q,p',q')=\sgn(p-p')\,\sgn(q-q') \psi'(|Q_\mu(p)-Q_\mu(p')|) \psi'(|Q_\mu(q)-Q_\mu(q')|) .\label{eq:cxy}
\end{align}

We first deal with the case $p+q\leq 1/2$. Using the definition of $\pi_{\mathrm{dia}}$, we compute the left-hand side of \eqref{eq:superm} as
{\allowdisplaybreaks\begin{align*}
  & \int g(p,q,p',q')\,\d C_{\mathrm{dia}}(p',q')\\
    &=\int_0^{1/2}g\left(p,q,p',\frac{1}{2}+p'\right)\d p'+\int_0^{1/2}g\left(p,q,p',\frac{1}{2}-p'\right)\d p'\\
    &\hspace{1cm}+\int^1_{1/2}g\left(p,q,p',\frac{3}{2}-p'\right)\d p'+\int^1_{1/2}g\left(p,q,p',p'-\frac{1}{2}\right)\d p'\\
    &\geq -\int_0^p \eta_\mu(p,p')\eta_\nu\left(\frac{1}{2}+p',q\right)\d p'+\int_p^{1/2}\eta_\nu(p',p)\eta_\nu\left(\frac{1}{2}+p',q\right)\d p'\\
   % &\hspace{1cm} -\int_0^{\min\{x,1/2-y\}}\eta_\mu(p,p')\psi'(\frac{1}{2}-x'-y)\,\d p'+\int_{\min\{x,1/2-y\}}^{\max\{x,1/2-y\}}\psi'(|x-x'|)\psi'(|\frac{1}{2}-x'-y|)\,\d p'-\int_{\max\{x,1/2-y\}}^{1/2}\eta_\nu(p',p)\eta_\nu(q,\frac{1}{2}-p')\,\d p'\\
   &\hspace{0.3cm}-\int_0^{1/2}\psi'(|Q_\mu(p)-Q_\mu(p')|)\psi'\left(\left|Q_\nu\left(\frac{1}{2}-p'\right)-Q_\nu(q)\right|\right)\d p'+\int_{1/2}^1\eta_\nu(p',p)\eta_\nu\left(\frac{3}{2}-p',q\right)\d p'\\
    &\hspace{0.3cm}-\int_{1/2}^{1/2+q}\eta_\nu(p',p)\eta_\nu\left(q,p'-\frac{1}{2}\right)\d p'+\int_{1/2+q}^1\eta_\nu(p',p)\eta_\nu\left(p'-\frac{1}{2},q\right)\d p'\\
        &=\underbrace{\int_{1-p}^1\eta_\nu(p',p)\eta_\nu\left(\frac{3}{2}-p',q\right)\d p'-\int_0^p \eta_\mu(p,p')\eta_\nu\left(\frac{1}{2}+p',q\right)\d p'}_{\displaystyle=:I_1}\\
    &\hspace{1cm}+\underbrace{\int_{1-p}^1\eta_\nu(p',p)\eta_\nu\left(p'-\frac{1}{2},q\right)\d p'-\int_0^{p}\eta_\mu(p,p')\eta_\nu\left(\frac{1}{2}-p',q\right)\d p'}_{\displaystyle=:I_2}\\
     &\hspace{1cm}+\underbrace{\int_{1/2+q}^{1-p}\eta_\nu(p',p)\eta_\nu\left(p'-\frac{1}{2},q\right)\d p'-\int_p^{1/2-q}\eta_\nu(p',p)\eta_\nu\left(\frac{1}{2}-p',q\right)\d p'}_{\displaystyle=:I_3}\\
         &\hspace{1cm}+\underbrace{\int_p^{1/2}\eta_\nu(p',p)\eta_\nu\left(\frac{1}{2}+p',q\right)\d p'-\int_{1/2-q}^{1/2} \eta_\nu(p',p)\eta_\nu\left(q,\frac{1}{2}-p'\right)\d p'}_{\displaystyle=:I_4}\\
    &\hspace{1cm}+\underbrace{\int_{1/2}^{1-p}\eta_\nu(p',p)\eta_\nu\left(\frac{3}{2}-p',q\right)\d p'-\int_{1/2}^{1/2+q}\eta_\nu(p',p)\eta_\nu\left(q,p'-\frac{1}{2}\right)\d p'}_{\displaystyle=:I_5}. 
\end{align*}
 By \eqref{eq:phi condition}, we have for all {$u$ such that $u^2\in \mathcal{D}$}, }
\begin{align}
    \psi''(u)=2\phi'(u^2)+4u^2\phi''(u^2)\leq 0.\label{eq:psi''}
\end{align}As a consequence, $\psi'$ is decreasing on the domain of interest. Since $\phi'\leq 0$, we have $\psi'\leq 0$.
Using a change of variable, we obtain
$$I_1=\int_0^p (\psi'(Q_\mu(1-p')-Q_\mu(p))-\eta_\mu(p,p'))\eta_\nu\left(\frac{1}{2}+p',q\right)\d p'\geq 0.$$
A similar argument using the symmetry properties of $Q_\mu$ and $Q_\nu$ shows that $I_j\geq 0$ for each $j=2,3,4,5$. Combining the above yields 
$$\int g(p,q,p',q')\,\d C_{\mathrm{dia}}(p',q')\geq 0,$$
proving \eqref{eq:superm} in the case $p+q\leq 1/2$.

Next, we deal with the case $1/2\leq p+q\leq 1$. It remains to show that 
\begin{align}
    \int g\left(\frac{1}{2},q,p',q'\right)\d C_{\mathrm{dia}}(p',q')=0,\quad \text{for all }q\in[0,1/2]\label{eq:toshow c1}
\end{align}
and that for all $ p,q\in[0,1/2]$,
\begin{align}
    \begin{split}
        &\frac{\partial}{\partial x}\int g(p,q,p',q')\,\d C_{\mathrm{dia}}(p',q')=\int c_{xxy}(Q_\mu(p),Q_\nu(q),Q_\mu(p'),Q_\nu(q'))\,\d C_{\mathrm{dia}}(p',q')\leq 0.
    \end{split}\label{eq:toshow c2}
\end{align}
Indeed, integrating \eqref{eq:toshow c2} and using \eqref{eq:toshow c1} imply \eqref{eq:superm}. By \eqref{eq:4c} and the smoothness of $\tilde{c}$, we have $\tilde{c}_{x}(Q_\mu(1/2),Q_\nu(q))=0$ for each $q\in[0,1]$, and hence \eqref{eq:toshow c1} follows. To prove \eqref{eq:toshow c2}, we first differentiate \eqref{eq:cxy} to get that in the a.e.~sense,
\begin{align}
    \begin{split}
        &c_{xxy}(Q_\mu(p),Q_\nu(q),Q_\mu(p'),Q_\nu(q'))\\
        &=\sgn(p-p')^2\sgn(q-q')\psi''(|Q_\mu(p)-Q_\mu(p')|)\psi'(|Q_\nu(q)-Q_\nu(q')|)\\
    &\hspace{4cm}+2\delta_{p-p'}\sgn(q-q')\psi'(|Q_\mu(p)-Q_\mu(p')|)\psi'(|Q_\nu(q)-Q_\nu(q')|)\\
    &=\sgn(q-q')\psi''(|Q_\mu(p)-Q_\mu(p')|)\psi'(|Q_\nu(q)-Q_\nu(q')|)\\
    &\hspace{4cm}+2\psi'(0)\,\delta_{p-p'}\sgn(q-q')\psi'(|Q_\nu(q)-Q_\nu(q')|).
    \end{split}\label{eq:cxxy}
\end{align}
Let $p,q\in[0,1/2]$ with $p+q\geq 1/2$. We first check that
\begin{align}
    \int \sgn(q-q')\psi''(|Q_\mu(p)-Q_\mu(p')|)\psi'(|Q_\nu(q)-Q_\nu(q')|)\,\d C_{\mathrm{dia}}(p',q')\leq 0.\label{eq:toshow c3}
\end{align}
%We assume $y\geq x$ as the other case is symmetric. 
Recall that $\psi'\leq 0$ and $\psi''\leq 0$ by \eqref{eq:psi''}. For notational simplicity, let 
$\hat{\eta}:\{(p,p'):0\leq p'\leq p\leq 1\}\to \R$ be given by 
$$
\hat{\eta}(p,p'):= \psi''(|Q_\mu(p)-Q_\mu(p')|).
$$
We compute
{\allowdisplaybreaks
\begin{align*}
    &\int \sgn(q-q')\hat{\eta}(p,p')\psi'(|Q_\nu(q)-Q_\nu(q')|)\,\d C_{\mathrm{dia}}(p',q')\\
    &=\int_0^{1/2}\sgn\left(q-\frac{1}{2}-p'\right)\hat{\eta}(p,p')\psi'\left(\left|Q_\nu(y)-Q_\nu\left(\frac{1}{2}+p'\right)\right|\right)\d p'\\
    &\hspace{1cm}+\int_0^{1/2}\sgn\left(q-\frac{1}{2}+p'\right)\hat{\eta}(p,p')\psi'\left(\left|Q_\nu(y)-Q_\nu\left(\frac{1}{2}-p'\right)\right|\right)\d p'\\
    &\hspace{1cm}+\int_{1/2}^1\sgn\left(q-\frac{3}{2}+p'\right)\hat{\eta}(p,p')\psi'\left(\left|Q_\nu(y)-Q_\nu\left(\frac{3}{2}-p'\right)\right|\right)\d p'\\
    &\hspace{1cm}+\int_{1/2}^1\sgn\left(q+\frac{1}{2}-p'\right)\hat{\eta}(p,p')\psi'\left(\left|Q_\nu(y)-Q_\nu\left(p'-\frac{1}{2}\right)\right|\right)\d p'\\
    &=-\int_0^{p}\hat{\eta}(p,p')\eta_\nu\left(\frac{1}{2}+p',q\right)\d p'-\int_p^{1/2}\hat{\eta}(p,p')\eta_\nu\left(\frac{1}{2}+p',q\right)\d p'\\
    &\hspace{1cm}-\int_0^{1/2-q}\hat{\eta}(p,p')\eta_\nu\left(q,\frac{1}{2}-p'\right)\d p'+\int_{1/2-q}^{1/2}\hat{\eta}(p,p')\eta_\nu\left(q,\frac{1}{2}-p'\right)\d p'\\
    &\hspace{1cm}-\int_{1/2}^{1/2+q}\hat{\eta}(p,p')\eta_\nu\left(\frac{3}{2}-p',q\right)\d p'-\int_{1/2+q}^1\hat{\eta}(p,p')\eta_\nu\left(\frac{3}{2}-p',q\right)\d p'\\
    &\hspace{1cm}+\int_{1/2}^{1/2+q}\hat{\eta}(p,p')\eta_\nu\left(q,p'-\frac{1}{2}\right)\d p'-\int_{1/2+q}^{1}\hat{\eta}(p,p')\eta_\nu\left(p'-\frac{1}{2},q\right)\d p'\\
    &\leq \int_{1/2}^{1/2+q}\hat{\eta}(p,p')\eta_\nu\left(q,p'-\frac{1}{2}\right)\d p'-\int_{1/2}^{1/2+q}\hat{\eta}(p,p')\eta_\nu\left(\frac{3}{2}-p',q\right)\d p'\\
    &\hspace{1cm}+\int_{1/2-q}^{p}\hat{\eta}(p,p')\eta_\nu\left(q,\frac{1}{2}-p'\right)\d p'-\int_{1/2-q}^{p}\hat{\eta}(p,p')\eta_\nu\left(\frac{1}{2}+p',q\right)\d p'\\
     &\hspace{1cm}+\int_{p}^{1/2}\hat{\eta}(p,p')\eta_\nu\left(q,\frac{1}{2}-p'\right)\d p'-\int_{p}^{1/2}\hat{\eta}(p,p')\eta_\nu\left(\frac{1}{2}+p',q\right)\d p'\\
     &\leq 0,
\end{align*}
  where the last step follows from $\hat{\eta}(p,p')\leq 0$ along with the following considerations:}
\begin{itemize}
    \item since $q\leq 1/2$, it holds $2Q_\nu(q)\leq 0= Q_\nu(p'-\frac{1}{2})+Q_\nu(\frac{3}{2}-p')$, so that $\eta_\nu(q,p'-\frac{1}{2})\geq \eta_\nu(\frac{3}{2}-p',q)$;
    \item again since $q\leq 1/2$, we have $2Q_\nu(q)\leq 0=Q_\nu(\frac{1}{2}-p')+ Q_\nu(\frac{1}{2}+p')$, so $\eta_\nu(q,\frac{1}{2}-p')\geq \eta_\nu(\frac{1}{2}+p',q)$.
\end{itemize}
This proves \eqref{eq:toshow c3}.

In addition, using $p,q\in[0,1/2]$, \eqref{eq:psi''}, and the definition of $ C_{\mathrm{dia}}$, we obtain
\begin{align*}
    &\int 2\delta_{p-p'}\sgn(q-q')\psi'(|Q_\nu(q)-Q_\nu(q')|)\,\d C_{\mathrm{dia}}(p',q')\\
    &=\sgn\left(q-\frac{1}{2}-p\right)\psi'\left(\left|Q_\nu(q)-Q_\nu\left(\frac{1}{2}+p\right)\right|\right)+\sgn(q-\frac{1}{2}+p)\psi'\left(\left|Q_\nu(q)-Q_\nu\left(\frac{1}{2}-p\right)\right|\right)\\
    &=\psi'\left(Q_\nu(q)-Q_\nu\left(\frac{1}{2}-p\right)\right)-\psi'\left(Q_\nu\left(p+\frac{1}{2}\right)-Q_\nu(q)\right) \geq 0.
\end{align*}
Therefore,
\begin{align}
    \int 2\psi'(0)\,\delta_{p-p'}\sgn(q-q')\psi'(|Q_\nu(q)-Q_\nu(q')|)\,\d C_{\mathrm{dia}}(p',q')\leq 0.\label{eq:toshow c4}
\end{align}
Combining \eqref{eq:cxxy}, \eqref{eq:toshow c3}, and \eqref{eq:toshow c4} yields \eqref{eq:toshow c2} and thus proves \eqref{eq:superm} in the case $p+q\geq 1/2$.
\end{proof}

\end{document}